\newtheorem{thm}{Theorem}[section]
\newtheorem{cor}[thm]{Corollary}
\newtheorem{lemma}[thm]{Lemma}
\newtheorem{prop}[thm]{Proposition}
\newtheorem{sublem}[thm]{Sublemma}
\theoremstyle{definition}
\newtheorem{definition}[thm]{Definition}
\newtheorem{example}[thm]{Example}
\theoremstyle{remark}
\newtheorem*{ack}{Acknowledgments}
\numberwithin{equation}{section}
\newcommand{\sphere}{\mathrm{\mathbb{S}}}
\newcommand{\crcl}{\mathsf{S}^1}
\newcommand{\Fix}{\mathrm{Fix}}
\newcommand{\G}{\mathsf G}
\newcommand{\T}{\mathsf T}
\newcommand{\RR }{\mathbb R}
\newcommand{\NN}{\mathbb N}
\newcommand{\ZZ}{\mathbb Z}
\begin{document}
\title[Nonnegatively curved quotient spaces with boundary]{Nonnegatively curved quotient spaces with boundary}

\author[W.\ Spindeler]{Wolfgang Spindeler$^*$}

\thanks{$^*$ All results were obtained when the author was part of SFB 878 - Groups, Geometry \& Actions at the University of M\"unster}

%\address{Mathematisches Institut, WWU M\"unster, Germany}
\email{wolfgang.spindeler@gmail.com}

% DATE
%\date{\today}

% MATH SUBJECT CLASSIFICATION AND KEYWORDS

%\subjclass[2000]{53C20}
%\keywords{nonnegative curvature, fixed point homogeneous, rationally elliptic, torus manifold}

% ABSTRACT

\begin{abstract}
Let $M$ be a compact nonnegatively curved Riemannian manifold admitting an isometric action by a compact Lie group $\G$ in a way that the quotient space $M/\G$ has nonempty boundary. Let $\pi : M \to M/\G$ denote the quotient map and $B$ be any boundary stratum of $M/\G$. Via a specific soul construction for $M/\G$ we construct a smooth closed submanifold $N$ of $M$ such that $M \setminus \pi^{-1}(B)$ is diffeomorphic to the normal bundle of $N$. As an application we show that a simply connected torus manifold admitting an invariant metric of nonnegative curvature is rationally elliptic.
\end{abstract}

\maketitle

\section{introduction}
In order to find possible obstructions to positive and nonnegative curvature on Riemannian manifolds it was suggested by Grove in the early 90's to study positively and nonnegatively curved Riemannian manifolds with a large amount of symmetry. A lot of results have been obtained in this area and it is very active until today. For an introduction see for example \cite{grove02}. While considerably more structure results have been obtained in the case of positive curvature, here we present a new tool for the study of nonnegatively curved Riemannian manifolds admitting certain symmetries. More precisely we consider isometric actions on compact and connected nonnegatively curved Riemannian manifolds in a way that the quotient space has nonempty boundary. Our main result is the following theorem.

\begin{thm}\label{main}
Let $(M,g)$ be a compact, connected and nonnegatively curved Riemannian manifold admitting an isometric action by a compact Lie group $\G$ in a way that the quotient space $M/\G$ has nonempty boundary $\partial M/\G$. Let $\pi : M \to M/\G$ denote the quotient map and $B$ be an arbitrary boundary stratum of $M/\G$. Then there exists a closed smooth $\G$-invariant submanifold $N$ of $M$ such that $M \setminus \pi^{-1}(B)$ is equivariantly diffeomorphic to the normal bundle of $N$.
\end{thm}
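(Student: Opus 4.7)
The plan is to carry out a Cheeger--Gromoll / Perelman style soul construction on the quotient space $M/\G$, using the distance function from the boundary stratum $B$, and then to lift the result back to $M$. The quotient $M/\G$ carries the structure of a nonnegatively curved Alexandrov space, and each boundary stratum is an extremal subset in the sense of Perelman--Petrunin. Consequently the distance function $d_B : M/\G \to \RR$ should be concave. Since the quotient map $\pi : M \to M/\G$ is a submetry between nonnegatively curved spaces, its composition $f := d_B \circ \pi : M \to \RR$ is a $\G$-invariant, Lipschitz function whose restriction to every geodesic in $M$ is concave.

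\textbf{Construction of $N$.} I would set $S_0 \subset M/\G$ to be the subset on which $d_B$ attains its maximum; by concavity of $d_B$ this is a closed, totally convex extremal subset. If $S_0$ has nonempty intrinsic boundary (arising from other strata of $\partial(M/\G)$), I would iterate: inside $S_0$ take the maximum level set of the distance to its boundary, and continue until a subset $S$ with empty intrinsic boundary is produced. Define $N := \pi^{-1}(S) \subset M$. At each stage one obtains a totally convex $\G$-invariant subset of $M/\G$, and the total preimage of a totally convex subset under the submetry $\pi$ is totally convex in $M$; hence $N$ is closed, $\G$-invariant, and totally convex in $M$. By the theorem of Cheeger and Gromoll on totally convex subsets, $N$ is then a smooth closed totally geodesic submanifold of $M$.

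\textbf{Identification with the normal bundle.} To identify $M \setminus \pi^{-1}(B)$ with the normal bundle $\nu(N)$ I would use a Sharafutdinov-type retraction. The generalized gradient flow of $f$ (in Petrunin's sense, since $f$ is only Lipschitz) is distance-nonincreasing and $\G$-equivariant, and deformation-retracts $M \setminus \pi^{-1}(B)$ onto $\pi^{-1}(S_0)$. Iterating this procedure on each level of the construction and lifting each step to $M$ produces a $\G$-equivariant deformation retraction of $M \setminus \pi^{-1}(B)$ onto $N$. Because $f$ is concave and constant on $N$, the trajectories of the Sharafutdinov flow meet $N$ orthogonally, so composing the inverse retraction with the normal exponential map of $N$ yields the desired equivariant diffeomorphism $M \setminus \pi^{-1}(B) \to \nu(N)$.

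\textbf{Main obstacle.} The principal difficulty is the smoothness of $N$ and the compatibility of the iterated soul construction on $M/\G$ with the orbit-type stratification of $M$. Cheeger--Gromoll guarantees smoothness of totally convex subsets, but one has to verify their hypothesis in the present setting---in particular, that the iterative procedure really terminates in a subset $S$ with empty intrinsic boundary, and that at every stage the generalized gradient flow on the quotient lifts to an equivariant, length-nonincreasing flow on $M$ whose maximal invariant set is exactly $\pi^{-1}(S)$. The orthogonality assertion needed for the normal-bundle identification will also require a ``flat strip'' style rigidity argument, analogous to the one in the original soul theorem but adapted to the singular target of the submetry $\pi$.
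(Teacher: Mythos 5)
Your argument hinges on the claim that ``the total preimage of a totally convex subset under the submetry $\pi$ is totally convex in $M$,'' from which you invoke Cheeger--Gromoll to conclude that $N = \pi^{-1}(S)$ is a smooth totally geodesic submanifold. This claim is false, and the paper itself gives counterexamples. Take $\mathbb Z_k$ ($k \geq 3$) acting on $\RR^2$ by rotation; the quotient is a cone, a ray $C$ emanating from the tip is convex in $M^*$, yet $\pi^{-1}(C)$ is a union of $k$ rays glued at the origin---not convex, not a topological manifold, and certainly not smooth. A second example in the paper (suspending the Hopf action on $\sphere^4$) shows the same failure for a convex subset $C \subset M^*$ with empty intrinsic boundary: $\pi^{-1}(C)$ is the spherical suspension of a torus. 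The point is that a geodesic in $M$ joining two points of $\pi^{-1}(C)$ need not project to a geodesic in $M^*$ unless it is horizontal, so the convexity of $C$ gives no control over arbitrary geodesics in $M$. Exactly this difficulty is what makes the nonnegatively curved case genuinely harder than the positively curved case (where the soul is a single orbit and the issue does not arise), and it is the reason the paper introduces the notions of \emph{linear points}, \emph{horizontally convex} subsets, and the technical equivalence ``$T_xC$ linear $\Leftrightarrow$ $N_xC$ linear'' for super-level sets of $d_B$.

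There is a second, subtler problem in the last step. Even after one has arranged that $\pi^{-1}(\Sigma)$ is a smooth closed submanifold by some iterated soul construction, the retraction argument is not automatic: the paper explicitly notes that for the naive soul $\Sigma$ (iterating ``maximal distance to the boundary'') the distance function $d_\Sigma$ may have critical points in $M^*\setminus(B\cup\Sigma)$, so the gradient/Sharafutdinov flow does not deformation-retract $M\setminus\pi^{-1}(B)$ onto $\pi^{-1}(\Sigma)$. The paper therefore runs a different iteration: at the $k$-th step it takes the set of maximal distance not to $\partial\Omega^*_k$ but to the set $E_k$ of nonlinear points of $\Omega^*_k$, which it proves is itself a boundary stratum (Proposition 2.29 in the source, ``stratum''), and proves regularity of $d_{\Omega^*_{k+1}}$ on the complement. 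Your proposal would need both of these replacements to go through: a criterion (linearity) for when a preimage is a smooth boundaryless submanifold, and an iteration designed so that the distance function to the constructed set is regular away from $B$.
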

The simplest stratum of $\partial M/\G$ is given by the boundary itself. For details on the definition we refer to section \ref{extremal}.

Among others, initial research on positively curved manifolds with symmetry was done by Grove and Searle. In \cite{grove-searle94} they obtain restrictions on the symmetry rank of such manifolds. The main ideas for our results already arise there and more self contained in \cite{grove-searle97}, in particular in the form of their soul lemma 1.9. A slight generalization of this was used later by Wilking in \cite{wilking06}; given a positively curved Riemannian manifold with an isometric $\G$-action in a way that the boundary $\partial M/\G$ of the quotient space is nonempty he obtains a diffeomorphism
\begin{align}\label{soul orbit}
M \setminus \pi^{-1}(B) \cong \nu(\G \ast p),
\end{align}
where $B$ is an arbitrary boundary stratum of $M/\G$ and $\nu (\G \ast p)$ denotes the normal bundle of the orbit $\G \ast p$. Our theorem \ref{main} can be seen as the best possible generalization of this result to the case of nonnegative curvature. 

Note that given the situation of theorem \ref{main}, but complementary assuming that $\partial M/\G$ is empty, and denoting by $\mathsf H$ the principal isotropy group of the action, it follows from \cite{wilking07dual} that for $\mathsf H \neq \{1\}$ there exists a subgroup $\mathsf K \subseteq \G$ with $\mathsf H \subseteq \mathsf K \subseteq N(\mathsf H)$ and a $\mathsf K$-invariant metric on $\G/\mathsf K$ together with an equivariant Riemannian submersion $M \to \G / \mathsf K$ with totally geodesic fibers. This also suggests that one should quite often be in the situation of our theorem when dealing with isometric actions in nonnegative curvature.

The results by Grove and Searle as well as Wilking were used as tools to obtain classification results for positively curved manifolds admitting certain symmetries. Theorem \ref{main} is propably useful for the classification program of nonnegatively curved manifolds with symmetries as well. As an indication of its potential applications we obtain the following result about torus manifolds. 
\begin{thm}\label{torus}
Let $M$ be a compact and simply connected torus manifold admitting an invariant metric of nonnegative curvature. Then $M$ is rationally elliptic.
\end{thm}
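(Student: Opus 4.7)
The plan is to induct on $n = (\dim M)/2$; the cases $n \leq 1$ are trivial since then $M$ is a point or $S^2$. Because $M$ is a torus manifold, the $\T$-action has a fixed point whose image is a corner of $M/\T$, so $\partial(M/\T)$ is nonempty and Theorem~\ref{main} applies. Choose a codimension-one face $B$ of $\partial(M/\T)$. Its preimage $M_B := \pi^{-1}(B)$ is the connected component of the fixed-point set of a circle subgroup $\T_B \subset \T$, hence a totally geodesic, nonnegatively curved submanifold of dimension $2(n-1)$. The residual torus $\T/\T_B$ acts on $M_B$ making it a $(2(n-1))$-dimensional torus manifold. Simple connectivity of $M_B$ follows by combining the Alexandrov topology of $M/\T$ (homeomorphic to a nice manifold with corners) with a slice-theorem lifting argument along the closed face $B$. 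By the inductive hypothesis, $M_B$ is rationally elliptic.

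Theorem~\ref{main} next furnishes a closed $\T$-invariant smooth submanifold $N \subset M$ with $M \setminus M_B$ equivariantly diffeomorphic to the normal bundle $\nu(N)$. The image $N/\T$ sits in $\partial(M/\T) \setminus B$, that is, inside the union of the other faces. I expect that, after replacing $\T$ by its effective quotient $\T/\mathsf{K}$ (where $\mathsf{K}$ is the principal isotropy along $N$), the submanifold $N$ is either a torus manifold of strictly smaller dimension or a compact homogeneous space of $\T/\mathsf{K}$; in either case $N$ is simply connected and rationally elliptic, by the inductive hypothesis or because compact homogeneous spaces of tori are rationally elliptic.

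A tubular-neighborhood argument then turns the identification $M \setminus M_B \cong \nu(N)$ into a double disk bundle decomposition $M = D(\nu(N)) \cup_S D(\nu(M_B))$, glued along a common sphere bundle $S$. The rational homotopy type of such a pushout is accessible through the Grove--Halperin theory of double mapping cylinders; since both bases $N$ and $M_B$ are simply connected and rationally elliptic, this yields rational ellipticity of $M$.

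The main obstacle lies in the middle step: the soul construction underlying Theorem~\ref{main} does not a priori produce a torus manifold or homogeneous structure on $N$, so one needs to recognize $N$ within one of the inductively accessible classes, perhaps by iterating Theorem~\ref{main} on $N$ itself or by relating the soul decomposition of $M/\T$ to the face structure of the quotient. A secondary technical point is the application of a Grove--Halperin type gluing theorem to disk bundles of arbitrary positive rank; if a suitable statement is not directly available in the literature, a direct Sullivan minimal model computation exploiting the $\T$-symmetry, together with the known rationally elliptic models for $N$ and $M_B$, should close the argument.
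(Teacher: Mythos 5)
Your outline points in the right direction (induction on $n$, a fixed-point-homogeneous circle, the double disk bundle decomposition of Theorem~\ref{fph}), but it goes astray at precisely the step you flag as the main obstacle, and there the missing idea is one that the paper exploits to sidestep the obstacle entirely.

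You set up the decomposition $M = D(\nu(N)) \cup_S D(\nu(M_B))$ and then try to establish that \emph{both} bases, $N$ and $M_B$, are simply connected and rationally elliptic before applying a Grove--Halperin type gluing theorem. Understanding $N$ is indeed hard — as you note, the soul-type construction behind Theorem~\ref{main} gives no usable description of $N$, and there is no reason for $N$ to be a torus manifold, a homogeneous space, or even simply connected. But the whole point is that you do not need $N$ at all. The relevant Grove--Halperin statement (Corollary 6.1 of \cite{grove-halperin87}, stated as Theorem~\ref{elliptic} in the paper) says that a \emph{simply connected} manifold with a double disk bundle decomposition is rationally $\Omega$-elliptic if and only if the boundary of \emph{one} of the two disk bundles is. Since $\partial D(F)$ is a sphere bundle over $F$, this reduces the claim to the rational $\Omega$-ellipticity of the fixed point component $F = M_B$ alone, which the induction hypothesis provides. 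The $N$-side of the decomposition plays no role in the rational homotopy argument.

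There is a second, smaller gap: you assert simple connectivity of $F$ via "a slice-theorem lifting argument along the closed face $B$," but no such quick argument is available. The paper derives $\pi_1(F) = 0$ from the double disk bundle decomposition via van Kampen, and when $N$ has codimension $2$ the argument is genuinely delicate — one must first construct an auxiliary circle action $\T^1_2$ on $M$ rotating the normal bundle of $N$, show it commutes with $\T^n$ and fixes $p_0$, and then use the two circle bundle projections $E \to F$ and $E \to N$ to conclude $\pi_1(E) = U_1 U_2$ and hence that orbit maps induce a surjection onto $\pi_1(F)$. Without this, simple connectivity of $F$ (needed to keep the induction running) is unjustified. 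Once both issues are repaired, the rest of your plan matches the paper's proof.
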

By definition a torus manifold is a compact, connected and orientable manifold of dimension $2n$ admitting a smooth and effective action by the $n$-dimensional torus in way that its fixed point set is nonempty. Theorem \ref{torus} was originally obtained in the authors thesis \cite{spindeler14}. It was deduced there from results on nonnegatively curved fixed point homogeneous manifolds. These results, and therefore theorem \ref{torus}, follow from our theorem \ref{main} and will be discussed in section \ref{section fph}.

The proof of theorem \ref{main} will be carried out in sections \ref{section preleminaries} and \ref{section proof main}. To give an overview of the arguments used let us first sketch how one obtains the decomposition \eqref{soul orbit} in the positively curved case: Given a compact and positively curved Riemannian manifold $M$ equipped with an isometric $\G$-action, the quotient space $M/\G$ is a positively curved Alexandrov space. Further, if $\partial M/\G$ is nonempty, the distance function $d_B$ to a boundary stratum $B \subseteq M/\G$ is a strictly concave function on $(M/\G) \setminus B$. Therefore, there exists a unique orbit $\G \ast p$ at maximal distance to $B$. Also by concavity the distance function to the orbit $\G \ast p$ is noncritcal on $M \setminus (\pi^{-1}(B) \cup \G \ast p)$. The result then follows from standard arguments in critical point theory for distance functions.

Now in the case of nonnegative curvature most of this arguments carry over. The main difference arises, since the distance function $d_B$ is only concave and therefore the set $C$ of maximal distance to $B$ does not consist of only a single orbit. In fact simple examples show that it is possible that $\pi^{-1}(C) \subset M$ is not a smooth submanifold without boundary. However, since $C$ is the super level set of a concave function, its geometry is quite rigid. Most importantly for our arguments, $C$ is convex with respect to projections of horizontal geodesics of $M$. For our proof we first derive basic geometric and regularity properties of convex subsets of quotient spaces and in particular super level sets of concave functions. Then via a specific soul construction for $M/\G$ we construct a convex subset whose preimage $N \subset M$ is a smooth submanifold without boundary in a way that we can additionally control the regularity of the distance function to $N$. The arguments for this are quite complicated but at the same time mostly of an elementary character.

\begin{ack}
I am grateful to Burkhard Wilking for his support during the work on my thesis where the techniques developed here originate.
\end{ack}

%%==============================0
%%===============================

\section{preleminaries}\label{section preleminaries}
Throughout this section let $M = (M,g)$ be a connected Riemannian manifold (not necessarily complete) equipped with an isometric action by a compact Lie group $\G$. The quotient space $M/\G$ is denoted $M^*$. $g$ induces a length metric $d$ on $M^*$ via the distance of orbits. The quotient map is denoted 
$$\pi : M \to M^*.$$
If $M$ has a lower bound on its sectional curvatures on an open set $U$ invariant under $\G$ then $U/\G \subset M^*$ has the same lower curvature bound in the distance comparison sense. In particular if $M$ is complete and has a uniform lower curvature bound the quotient space $M^*$ is an Alexandrov space with the same lower curvature bound as $M$.
For a point $p \in M$ with orbit $\G \ast p$ and isotropy group $\G_p$ the tangent space $T_pM$ decomposes $\G_p$-invariant into the normal space to the orbit $\G \ast p$ at $p$ and its tangent space at $p$;
$$T_pM = N_p(\G \ast p) \oplus T_p(\G \ast p).$$
 For $x \in M^*$ the tangent cone at $x$ is denoted by $T_xM^*$ and the space of directions at $x \in M^*$ is denoted by $\Sigma_xM^*$. A continuous curve $\gamma : I \to M^*$ (or more generally mapping into an Alexandrov space) is called a geodesic if $\gamma$ is locally a minimal segment between the points on it. Then $\Sigma_xM^*$ consists of the initial directions of geodesics emanating from $x$. By the slice theorem it follows that for $p \in M$ with $\pi(p) = x$ we have  $T_xM^* = N_p(\G \ast p)/\G_p$ up to isometry. Thus for all $p \in M$ the differential of $\pi$ at $p$ can be identified with the quotient map
$$d\pi_p : N_p(\G \ast p) \to N_p(\G \ast p)/\G_p = T_xM^*.$$
By the type of $x \in M^*$ we mean the orbit type of any $p \in M$ with $\pi(p) = x$. A point in $M^*$ of maximal type is called regular. A more detailed introduction to the geometry of orbit spaces can be found for example in \cite{grove02}.
\\

%====================================section===========00
%========================================================
\subsection{convex subsets of quotient spaces}
The aim of this section is to obtain regularity results for a convex subset $C \subseteq M^*$ and more importantly its preimage $\pi^{-1}(C) \subseteq M$. This is motivated by the result of Cheeger and Gromoll in \cite{cheeger-gromoll72} that a closed and convex subset of a Riemannian manifold is a smooth totally geodesic submanifold possibly with nonsmooth boundary. As mentioned in the introduction convex sets are the basic objects to study for the proof of our main theorem, since they arise as the level sets of the distance function to a boundary stratum of $M^*$.
\begin{definition}
 Let $A$ be an Alexandrov space and $C \subseteq A$. Then $C$ is convex if for all $x,y \in C$ there exists a minimal geodesic from $x$ to $y$ that is contained in $C$. $C$ is locally convex if $C$ is connected and for all $x \in C$ there exists $\epsilon > 0$ such that for all $y,z \in B_\epsilon(x)$ there exists a minimal geodesic from $y$ to $z$ that is contained in $C$.
\end{definition} 
There are several possible natural notions of convexity. We chose this definition for our first basic observations, since it is the weakest one that comes to mind. Later on we will consider stronger convexity properties aiming at the proof of theorem \ref{main}. The connectedness assumption in the local version guarantees that local properties, as for example the dimension, are defined globally.

It is easy to see that a closed and locally convex subset $C$ of an Alexandrov space equipped with the induced intrinsic metric is again an Alexandrov space with the same lower curvature bound. Also a geodesic of $C$ is a geodesic of $A$ as well. Similarly note that a closed subset $C \subseteq A$ is convex if and only if the induced metric on $C$ is intrinsic and a geodesic of $C$ is also a geodesic of $A$.

The following simple example shows that the preimage $\pi^{-1}(C) \subset M$ of a closed and convex subset $C \subset M^*$ might fail to be a topological manifold.%, while the third example shows that $\pi^{-1}(C)$ might be a smooth submanifold, but fail to be totally geodesic.
\begin{example}\label{example 1}
Let $\mathbb Z_k$ act on $\RR^2$ via rotation around the origin by an angle $\frac{2 \pi} k$. Then the quotient space is isometric to the cone over a circle of length $\frac{2 \pi} k$. A ray emanating from the tip of the cone defines a convex subset $C$ and the preimage of $C$ in $\RR^2$ is isometric to $k$ copies of $[0,\infty[$ glued together at $0$.
\end{example}
A bit more complicated example shows that similar effects occur also for convex subsets with empty boundary (by boundary we always mean the intrinsic boundary of $C$ considered as an Alexandrov space if not mentioned otherwise).
\begin{example}\label{example 2}
Let the circle $\crcl$ act on the $3$-sphere $\sphere^3$ via the Hopf action. Then the quotient space is given by $\sphere^2(1/2)$, the $2$-dimensional sphere of radius $1/2$. Suspending the Hopf action we obtain an isometric action of $\crcl$ on $M = \sphere^4$ whose quotient space $M^*$ is the spherical suspension of $\sphere^2(1/2)$. Consider a great circle $\Gamma$ in $\sphere^2(1/2)$ and let $C \subset M^*$ denote the subset obtained by suspending $\Gamma$. Then $C$ is closed and convex in $M^*$ and has empty boundary. The preimage $\pi^{-1}(C) \subset \sphere^4$ is given by the spherical suspension of $\pi^{-1}(\Gamma)$. Since $\pi^{-1}(\Gamma)$ is an embedded $2$-torus in $\sphere^3$, it follows that $\pi^{-1}(C) \subset \sphere^4$ is given by the spherical suspension of a torus.
\end{example}
As a first structure result we observe that a convex subset of $M^*$  has a maximal orbit type with properties analogous to the maximal type of $M$.
\begin{lemma}\label{maximal type}
 Let $C \subseteq M^*$ be locally convex. Then there exists a unique maximal type in $C$ and the points of this maximal type form an open convex and dense subset of $C$.
\end{lemma}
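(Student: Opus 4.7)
The plan rests on two standard consequences of the slice theorem, which I would record first. The first is that in any sufficiently small ball about $x_0 \in M^*$ only finitely many orbit types appear, parameterized by isotropy subgroups of $\G_{p_0}$ for any $p_0 \in \pi^{-1}(x_0)$; in particular every nonempty collection of such types has maximal elements. The second is a geodesic fact: lifting any minimal geodesic $\gamma \subset M^*$ to a horizontal geodesic in $M$ and using upper semicontinuity of isotropy, there is a unique \emph{generic type} $\tau_\gamma$ along $\gamma$ that is attained on an open dense subset and dominates the type at every point of $\gamma$; moreover, if the two endpoints of a sufficiently short minimal geodesic share a common type $\tau$, then the whole geodesic lies in the $\tau$-stratum, via total geodesy of the fixed-point set $\Fix(\mathsf H,M)$ of the common isotropy $\mathsf H$.

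With these in hand, for each $x \in C$ I would define the local maximal type $\tau(x)$ in $C$ as the eventual value, as $\varepsilon \to 0$, of any maximum of the (finite, nonincreasing) family of types appearing in $C \cap B_\varepsilon(x)$. Uniqueness of this maximum: if $\tau_1 \neq \tau_2$ were both maximal in $C \cap B_\varepsilon(x)$ with representatives $y_1,y_2$, then after shrinking $\varepsilon$, local convexity of $C$ supplies a minimal geodesic in $C$ from $y_1$ to $y_2$; its generic type appears in $C \cap B_\varepsilon(x)$ and dominates both $\tau_1$ and $\tau_2$, forcing $\tau_1=\tau_2$. The same geodesic argument, applied between representatives in overlapping neighborhoods of nearby $x,x' \in C$, gives $\tau(x)=\tau(x')$, so $x \mapsto \tau(x)$ is locally constant on $C$; connectedness of $C$ then yields a global maximum $\tau^*$.

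Set $C^* := \{x \in C : \text{type}(x) = \tau^*\}$. Openness in $C$ is immediate from upper semicontinuity of isotropy: $\{y \in M^* : \text{type}(y) \geq \tau^*\}$ is open in $M^*$ and, by global maximality of $\tau^*$, intersects $C$ exactly in $C^*$. Density is packaged into the definition of $\tau(x)$, since $\tau^*$ is attained in every sufficiently small $C \cap B_\varepsilon(x)$ by construction. For convexity, in the local sense corresponding to the hypothesis on $C$: for $y_1,y_2 \in C^*$ sufficiently close, a minimal geodesic in $C$ between them exists by local convexity, and the short-geodesic clause of the geodesic fact above places it entirely within the $\tau^*$-stratum, hence in $C^*$.

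The main technical point I expect to require care is the geodesic fact itself, in particular the control of how the isotropy along a horizontal geodesic can jump as it crosses singular strata, and the verification that a short geodesic between two points of a common isotropy type stays within the corresponding fixed-point set (via uniqueness of short minimizers and $\mathsf H$-invariance). Once that is set, the remainder is bookkeeping with finite posets, upper semicontinuity, and local convexity.
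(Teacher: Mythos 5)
The proposal takes essentially the same route as the paper's one-line sketch: slice theorem for local finiteness of orbit types, constancy of orbit type along the interior of a geodesic of $M^*$ together with upper semicontinuity at the endpoints, and local convexity. Uniqueness, openness, density and (local) convexity of the maximal-type set all follow from these three ingredients, as you outline, and your write-up fills in the details the paper explicitly leaves to the reader.

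One caution about the second half of your ``geodesic fact.'' The claim that a sufficiently short minimal geodesic of $M^*$ whose endpoints share a common type $\tau$ must lie entirely in the $\tau$-stratum is not true in general, and the appeal to total geodesy of $\Fix(\mathsf H)$ does not establish it. A horizontal lift of the geodesic starting in $\Fix(\mathsf H)$ need not end in $\Fix(\mathsf H)$ --- it ends in some $\G$-translate of $\Fix(\mathsf H)$ --- so total geodesy of $\Fix(\mathsf H)$ is not directly applicable; and even if one could arrange both lifted endpoints to lie in $\Fix(\mathsf H)$, the lifted curve would only be forced into $\Fix(\mathsf H)$ if it were the minimizer there, which is a separate issue. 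For a concrete illustration that the conclusion can fail, take the standard $\crcl$-rotation on $\sphere^2$: the quotient is an arc whose two endpoints are the only points of $\crcl$-type, and the $M^*$-geodesic joining them runs through the principal stratum. There ``sufficiently short'' renders the clause vacuous rather than true, since the $\crcl$-stratum is disconnected.

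Fortunately your convexity argument for $C^*$ does not actually need this clause. If $y_1, y_2 \in C^*$ are close enough that local convexity supplies a minimal geodesic $\gamma$ from $y_1$ to $y_2$ lying in $C$, then the type is constant on the open arc $\gamma((0,1))$ and, by upper semicontinuity at the endpoints, dominates $\tau^*$; but it also cannot exceed $\tau^*$ because $\gamma \subset C$ and $\tau^*$ is maximal in $C$. Hence $\gamma \subset C^*$. With the short-geodesic clause dropped in favor of this direct argument --- which is precisely the observation the paper's sketch rests on --- your proof is correct.
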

\begin{proof}
Since the type is constant along an open geodesic arc of $M^*$ and can only decrease in its closure, it is enough to find an open subset $U$ of $C$ on which the type is constant. The existence of such a $U$ then follows from the slice theorem and convexity. The details are left to the reader.
\end{proof}
Keeping lemma \ref{maximal type} in mind the following definition makes sense.
\begin{definition}
 Let $C \subseteq M^*$ be locally convex. A point $x \in C$ of maximal type is called a  regular point of $C$.
\end{definition}
Considering $M^*$ as a convex subset of itself this terminology is consistent with the notion of a regular point of $M^*$.  However, note that a regular point of a convex subset $C \subset M^*$ can possibly be contained in $\partial C$ in contrast to the case of $M^*$.
\begin{lemma}\label{regular point}
 Let $C \subseteq M^*$ be locally convex and locally closed (i.e. for every point $x \in C$ there exists $\epsilon > 0$ such that $B_\epsilon(x) \cap C$ is closed in $B_\epsilon(x)$). Given a regular point $x$ of $C$ there exists an open neighborhood $U$ of $x$ in $C$ such that $\hat U := \pi^{-1}(U)$ is a smooth submanifold of $M$, possibly wit nonsmooth boundary. Furhter, if $p \in M$ with $\pi(p) = x$, then $p$ is a boundary point of $\hat U$ if and only if $x$ is a boundary point of $C$.
\end{lemma}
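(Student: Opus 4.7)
The plan is to reduce, via the slice theorem, to the study of a locally convex subset of a Euclidean vector space. Fix $p \in \pi^{-1}(x)$ and apply the slice theorem to obtain a $\G$-equivariant diffeomorphism of a tubular neighborhood $U_p$ of $\G \ast p$ with $\G \times_{\G_p} V$, where $V \subset N_p(\G \ast p)$ is a small $\G_p$-invariant open ball. On quotients this induces an isometry $U_p/\G \cong V/\G_p$ sending $x$ to the image of $0$. By Lemma \ref{maximal type}, after shrinking I may assume that the open neighborhood $U$ of $x$ in $C$ lies inside $U_p/\G$ and consists entirely of regular points of $C$, so every $y \in U$ has the same orbit type $[\G_p]$ as $x$.

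Next I would lift $U$ to the slice: set $\tilde C := \{v \in V : [v] \in U\}$. The isotropy in $\G$ of $[e,v] \in \G \times_{\G_p} V$ equals $(\G_p)_v$, and by the previous step this subgroup is $\G$-conjugate to $\G_p$. A closed subgroup of the compact Lie group $\G_p$ that is $\G$-conjugate to $\G_p$ has the same dimension and the same number of connected components as $\G_p$, hence equals $\G_p$. Therefore $\tilde C \subseteq \Fix(\G_p, V)$, which is a linear subspace of $V$. Because $\G_p$ fixes $\Fix(\G_p, V)$ pointwise, the inclusion $\Fix(\G_p, V) \hookrightarrow V/\G_p$ is an isometric embedding: for $v, w \in \Fix(\G_p, V)$ one has $\min_{g \in \G_p} |v - gw| = |v - w|$, and the straight segment between $v$ and $w$ stays in $\Fix(\G_p, V)$.

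Through this isometric embedding $\tilde C$ inherits local convexity and local closedness from $U \subseteq C$, so near $0$ it is a relatively closed, locally convex subset of the Euclidean space $\Fix(\G_p, V)$. By the theorem of Cheeger and Gromoll \cite{cheeger-gromoll72} (or, here, by elementary convex analysis), $\tilde C$ is a smooth totally geodesic submanifold of $\Fix(\G_p, V)$, possibly with nonsmooth boundary, and its manifold-with-boundary boundary is precisely the topological boundary inside its affine hull. Pulling back through the slice chart, $\pi^{-1}(U)$ is identified with the associated bundle $\G \times_{\G_p} \tilde C$, and since $\G_p$ acts trivially on $\tilde C$ this is a smooth $\G$-invariant submanifold of $\G \times_{\G_p} V \cong U_p$, which gives the first assertion.

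For the boundary claim, the manifold boundary of $\G \times_{\G_p} \tilde C$ is $\G \times_{\G_p} \partial \tilde C$, so $p$ is a boundary point of $\hat U$ exactly when $0 \in \partial \tilde C$ in $\Fix(\G_p, V)$. Under the isometric embedding $\Fix(\G_p, V) \hookrightarrow V/\G_p$, the Alexandrov boundary of $U$ at $x$ corresponds to the convex-set boundary of $\tilde C$ at $0$, completing the equivalence. The main obstacle will be the group-theoretic step forcing $(\G_p)_v = \G_p$ for all $v \in \tilde C$: this is what puts $\tilde C$ into the linear subspace $\Fix(\G_p, V)$ and reduces the problem from potentially singular Alexandrov geometry in $M^*$ to elementary Euclidean convex geometry; once this reduction is in hand the remaining assertions about smoothness, normal bundle structure, and boundary identification are essentially routine.
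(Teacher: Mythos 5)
Your proposal is correct and follows essentially the same route as the paper: you reduce, via the orbit type stratum (realized in the slice as $\Fix(\G_p,V)$, which locally is exactly the paper's $N$), to a locally convex subset of a smooth Riemannian manifold and invoke Cheeger--Gromoll. The only point to tidy up is that $\Fix(\G_p,V)$ carries the metric pulled back from $M$ via $\exp_p$ (making it a totally geodesic Riemannian ball, not a genuinely Euclidean one, so ``elementary convex analysis'' does not literally apply), and one should check that a short $M^*$-geodesic between two points of $U$ actually remains in the $[\G_p]$-stratum --- which follows since the interior type of a geodesic is constant, dominates the endpoint type $[\G_p]$, yet also lies in $C$ and hence is at most $[\G_p]$; both are small gaps present implicitly in the paper's argument as well.
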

\begin{proof}
 Let $x \in C$ be regular. Then an open neighborhood $U$ of $x \in C$ contains only regular points. Let $\pi(p) = x$. Denote by $N$ the component of points of $M$ of the same type as $p$ that contains $p$. Then $N$ is a smooth $\G$-invariant submanifold of $M$ without boundary. Note that $C \cap N^*$ is a locally convex subset of $N^*$. Therefore $C \cap N^*$ is a smooth submanifold of $N^*$, possibly with nonsmooth boundary. Since $\pi_{\vert N}$ is a smooth submersion it is clear that $\pi^{-1}(C) \cap N$ is a smooth submanifold of $N$ as well, and therefore also of $M$, and $p$ is a boundary point of $\pi^{-1}(C) \cap N$ if and only if $x$ is a boundary point of $C$.
\end{proof}
To determine the geometry of $C$ near a nonregular point $x$ we need to determine the structure of the tangent cone $T_xC$. The starting point is the following observation.
\begin{lemma}
 Let $C \subseteq M^*$ be locally convex. Then $T_xC \subseteq T_xM^*$ is convex for all $x \in C$.
\end{lemma}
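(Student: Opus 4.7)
The plan is a blow-up argument at $x$. Recall that by the slice theorem $T_xM^*$ is isometric to $N_p(\G \ast p)/\G_p$ for any $p \in \pi^{-1}(x)$, so it is a (nonnegatively curved) Alexandrov cone. Moreover, locally near $x$ the space $M^*$ is close, after rescaling, to $T_xM^*$: the pointed metric spaces $(M^*, x, \tfrac{1}{t}d)$ converge to $T_xM^*$ in the pointed Gromov--Hausdorff sense as $t \to 0$. Since $C$ is locally convex, its intrinsic metric agrees locally with the one inherited from $M^*$, so under the same rescaling the sets $C$ converge to the tangent cone $T_xC \subseteq T_xM^*$.

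To show convexity, fix $V, W \in T_xC$. By the definition of the tangent cone I may choose sequences $p_n, q_n \in C$ with $p_n, q_n \to x$ and scales $t_n \to 0$ so that, after rescaling by $1/t_n$, the points $p_n$ and $q_n$ converge to $V$ and $W$ respectively in $T_xM^*$. For $n$ large, $p_n$ and $q_n$ both lie inside a single ball $B_\epsilon(x)$ on which the local convexity hypothesis applies, so there exists a minimal segment $\sigma_n$ in $C$ from $p_n$ to $q_n$. As noted in this section, such a segment is also a geodesic of $M^*$, and for $n$ large its $C$-length equals the $M^*$-distance $d(p_n, q_n)$.

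Rescaling the metric by $1/t_n$, the segments $\sigma_n$ have uniformly bounded length $d(p_n, q_n)/t_n \to d_{T_xM^*}(V,W)$. By the standard Arzel\`a--Ascoli compactness for unit-speed minimizing segments in Alexandrov spaces, a subsequence converges uniformly to a curve $\sigma_\infty$ in $T_xM^*$ from $V$ to $W$. Each $\sigma_n$ lies in $C$, so its rescaled image lies in the rescaled copy of $C$; passing to the limit, $\sigma_\infty$ lies in $T_xC$. The length of $\sigma_\infty$ is at most $\liminf \mathrm{length}(\sigma_n)/t_n = d_{T_xM^*}(V,W)$, hence $\sigma_\infty$ is a minimal geodesic in $T_xM^*$ contained in $T_xC$, which is exactly what convexity requires.

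The main obstacle is justifying the two convergence statements: that $(M^*, x, \tfrac{1}{t}d) \to T_xM^*$ in pointed Gromov--Hausdorff sense and that the rescaled minimizing segments $\sigma_n$ subconverge. Both are standard once one has a local lower sectional curvature bound on $M^*$, which is available here because every Riemannian metric on $M$ has bounded sectional curvature on a compact neighborhood of $p$, and this bound descends to the quotient. With these ingredients in place the argument is routine; the genuinely new content is only that local convexity of $C$ gives the minimal segments $\sigma_n$ in $C$ used to produce the limiting geodesic in $T_xC$.
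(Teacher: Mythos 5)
Your argument is correct, and it follows the same blow-up strategy the paper uses. The paper factors the limiting step through a separate lemma: if $(A_n, C_n, x_n) \to (A, C, x)$ in the pointed Gromov--Hausdorff sense with each $C_n \subseteq A_n$ closed and convex, then $C \subseteq A$ is convex; applying this to the rescalings $A_n = \tfrac{1}{t_n}M^*$, $C_n = C$ gives convexity of $T_xC$. The mechanism inside that lemma differs slightly from yours: the paper starts from a minimal geodesic $\gamma$ of the limiting set $C$, restricts to an interior subarc $\gamma|_{[0,l]}$ (which is the \emph{unique} minimal segment between its endpoints), and uses that uniqueness to force the approximating $C_n$-segments to converge to $\gamma|_{[0,l]}$ and hence realize it as a geodesic of $A$. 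You instead fix the two endpoints $V,W \in T_xC$, pick approximating points and minimizing segments $\sigma_n \subseteq C$, and invoke Arzel\`a--Ascoli plus lower semicontinuity of length to produce the limiting minimizer. Both mechanisms are standard and both work; yours constructs a segment directly and so does not need the ``geodesic of $C$ is a geodesic of $A$'' reformulation that the paper relies on, while the paper's version isolates a reusable statement and, by exploiting uniqueness of interior subarcs, sidesteps the question of which subsequential limit one obtains. Two small housekeeping points worth being explicit about in your write-up: the paper's definition of local convexity gives, for $y,z$ near $x$ in $C$, a segment in $C$ that is also minimizing in $M^*$ (so $d_C$ and $d_{M^*}$ agree locally, which is what lets you equate $\mathrm{length}(\sigma_n)$ with $d_{M^*}(p_n,q_n)$), and one needs to identify the Gromov--Hausdorff limit of the rescaled sets $\tfrac{1}{t_n}C$ with $T_xC$ as a subset of $T_xM^*$, which is the content of ``passing to the limit, $\sigma_\infty$ lies in $T_xC$''.
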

In fact this follows easily from the next observation about convex subsets of Alexandrov spaces.
\begin{lemma}
 Let $A$ and $A_n$, for $n \in \mathbb N$, be Alexandrov spaces with a common lower curvature bound and $C_n \subseteq A_n$ be closed and convex. Also let $C \subseteq A$ be closed such that $(A_n, C_n, x_n) \to (A, C, x)$ in the pointed Gromov-Hausdorff sense for $x_n \in A_n$ and $x \in A$. Then $C$ is convex in $A$.
\end{lemma}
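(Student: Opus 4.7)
The plan is to fix two points $p, q \in C$ and produce a minimal geodesic in $A$ joining them that is contained in $C$, via a standard limiting procedure for geodesics under Gromov-Hausdorff convergence. First I would use the pointed Gromov-Hausdorff convergence of the triples $(A_n, C_n, x_n) \to (A, C, x)$ to pick sequences $p_n, q_n \in C_n$ approximating $p$ and $q$, so that in particular $d_{A_n}(p_n, q_n) \to d_A(p, q)$. Since $C_n \subseteq A_n$ is closed and convex, I would then choose minimal geodesics $\gamma_n : [0, \ell_n] \to C_n$ from $p_n$ to $q_n$; as noted just before the lemma, each such $\gamma_n$ is in fact a minimal geodesic of $A_n$.

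Next, after reparametrizing each $\gamma_n$ on the common interval $[0,1]$, I would invoke the standard Arzel\`a-Ascoli-type compactness for minimal geodesics in a Gromov-Hausdorff convergent sequence of Alexandrov spaces sharing a lower curvature bound: the lengths $\ell_n$ are uniformly bounded, so along a subsequence the $\gamma_n$ converge uniformly to a curve $\gamma : [0,1] \to A$, and the limit is a minimal geodesic from $p$ to $q$ in $A$.

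Finally, each point $\gamma(t)$ is the limit of $\gamma_n(t) \in C_n$; since $C$ is closed in $A$ and $(A_n, C_n) \to (A, C)$ in the pointed Gromov-Hausdorff sense, this forces $\gamma(t) \in C$ for every $t$. Hence $\gamma$ is a minimal geodesic from $p$ to $q$ entirely contained in $C$, which establishes convexity.

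The main obstacle is the compactness step: I need to know that minimal geodesics under pointed Gromov-Hausdorff convergence of Alexandrov spaces with a common lower curvature bound subconverge to minimal geodesics of the limit. This is a standard fact in Alexandrov geometry, following from Toponogov-type equicontinuity bounds on arclength-parametrized minimal segments together with lower semicontinuity of length under uniform convergence, and it is really the only non-formal input needed; the rest of the argument is just unpacking the definitions of pointed GH convergence of sets and of closedness of $C$.
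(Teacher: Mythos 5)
Your proof is correct, but it is organized differently from the paper's.  The paper first observes that $C$ itself is an Alexandrov space (as a pointed GH limit of Alexandrov spaces with a common lower curvature bound) and then verifies the criterion stated just before the lemma: it takes an arbitrary minimal geodesic $\gamma:[0,1]\to C$ of the intrinsic metric, restricts it to a proper sub-interval $[0,l]$ so that (by non-branching of geodesics in Alexandrov spaces) $\gamma|_{[0,l]}$ is the \emph{unique} minimal segment between its endpoints, approximates those endpoints by $a_n,b_{n,l}\in C_n$, and notes that the resulting $C_n$-geodesics $\gamma_{n,l}$ necessarily converge to $\gamma|_{[0,l]}$ because the limit is forced by uniqueness; since each $\gamma_{n,l}$ is an $A_n$-geodesic by convexity of $C_n$, the limit $\gamma|_{[0,l]}$ is an $A$-geodesic, and letting $l\to 1$ finishes.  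You instead fix two points $p,q\in C$, lift them to $C_n$, take the $A_n$-minimal geodesics furnished by convexity of $C_n$, and pass to a subsequential limit by Arzel\`a--Ascoli; the limit is $A$-minimizing by lower semicontinuity of length, and lies in $C$ since $C$ is closed and is the GH limit of the $C_n$.  The trade-off is that the paper avoids a compactness/subsequence extraction by exploiting uniqueness of interior sub-geodesics (which requires first knowing $C$ is a geodesic space, hence the appeal to $C$ being Alexandrov), whereas your argument is self-contained on that point --- you never need to know in advance that $(C,d_A|_C)$ is geodesic, since that falls out of the construction --- but you do invoke the equicontinuity/compactness machinery for geodesics under GH convergence, which the paper sidesteps.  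Both routes are sound.
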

\begin{proof}
 Since $C$ is the limit of Alexandrov spaces with curvature uniformly bounded from below, $C$ itself is an Alexandrov space. Let $\gamma : [0,1] \to C$ be a minimal geodesic of $C$ between $\gamma(0)$ and $\gamma(1)$. We need to show that $\gamma$ is also a minimal geodesic of $A$. Let $0 < l < 1$ be arbitrary. Then $\gamma : [0,l] \to C$ is a unique minimal geodesic between its endpoints. 
Let $a_n, b_{n,l} \in C_n$ with $a_n \to \gamma(1)$ and $b_{n,l} \to \gamma(l)$ for $n \to \infty$. Choose a minimal geodesic $\gamma_{n,l}$ of $C_n$ from $a_n$ to $b_{n,l}$. Since $\gamma_{[0,l]}$ is a unique minimal geodesic, it follows that $\gamma_{n,l}$ converges to $\gamma_{[0,l]}$. Now, since $C_n$ is convex, we conclude that $\gamma_{n,l}$ is also a minimal geodesic of $A_n$. Therefore, the limt $\gamma_{[0,l]}$ is a minimal geodesic of $A$ for all $0 < l< 1$. Hence $\gamma$ is a minimal geodesic of $A$.
\end{proof}

In the following we are dealing with the question under what conditions a geodesic of $C$ can be extended within $C$.

\begin{lemma}\label{exponentiating interior points}
 Let $C \subseteq M^*$ be locally closed and locally convex. Then for every $x \in C$ that is not contained in the closure of the set of regular boundary points of $C$ there exists $\rho > 0$ such that $\exp_x : B_\rho(0_x) \cap T_xC \to M^*$ is a homeomorphism onto its image $B_\rho(x) \cap C$.
\end{lemma}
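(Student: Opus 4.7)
The plan is to work in the cone model supplied by the slice theorem and to reduce the problem to a uniform geodesic extendability property inside $C$. Pick $p \in \pi^{-1}(x)$, put $V = N_p(\G \ast p)$ and $H = \G_p$, so that a neighborhood of $x$ in $M^*$ is identified with a neighborhood of the cone point $0$ in the Euclidean cone $V/H = T_xM^*$, and under this identification $\exp_x$ corresponds to the natural projection $V/H \to M^*$, restricting to a homeomorphism $B_{\rho_0}(0_x) \to B_{\rho_0}(x)$ for some small $\rho_0>0$. Shrinking $\rho_0$ using the hypothesis, we may further assume that $C \cap B_{\rho_0}(x)$ is closed in $B_{\rho_0}(x)$ and contains no regular boundary point of $C$. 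The lemma then amounts to showing $C \cap B_\rho(x) = \exp_x(T_xC \cap B_\rho(0_x))$ for some $\rho \in (0, \rho_0]$.

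The inclusion $\subseteq$ is immediate from convexity: for $y \in C \cap B_{\rho_0}(x)$ the unique minimal $M^*$-geodesic from $x$ to $y$ is the radial segment in the cone, convexity of $C$ forces it to lie in $C$, so $y/|y| \in \Sigma_xC$ and $y$ is in the image. The opposite inclusion is equivalent to the following uniform extendability statement: there exists $\rho > 0$ such that for every $\sigma \in \Sigma_xC$ the $C$-geodesic from $x$ in direction $\sigma$ has length at least $\rho$. If this fails, a sequence $\sigma_n \in \Sigma_xC$ produces $C$-geodesics with maximal extension $L_n \to 0$, and the endpoints $y_n := \exp_x(L_n \sigma_n)$ converge to $x$. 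An opposition-of-directions argument shows that each $y_n$ must lie in the Alexandrov boundary $\partial C$, for otherwise the direction at $y_n$ opposite to the direction of approach would also lie in $\Sigma_{y_n}C$, allowing further extension. By the choice of $\rho_0$, each $y_n$ must be a non-regular boundary point of $C$ for large $n$.

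The main obstacle is to rule out such a sequence of non-regular boundary points accumulating to $x$. The natural approach is to rescale distances by $L_n^{-1}$ and pass to a pointed Gromov--Hausdorff limit: the rescaled $(M^*,x)$ converges to $(T_xM^*, 0_x)$, the rescaled sets $C$ converge to $T_xC$, and after passing to a subsequence $y_n$ converges to a point $y_\infty \in T_xC$ of norm one. Lower semi-continuity of the Alexandrov boundary under convergence of spaces with a uniform lower curvature bound places $y_\infty \in \partial T_xC$. The key structural step, where the main work lies, is to show that any such boundary point of the convex cone $T_xC$ at positive distance from $0$ is a limit of regular boundary points of $T_xC$; this uses the linear structure of $V$, the stratification of $V/H$ by $H$-orbit type, and the convexity of $T_xC$ in $V/H$. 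Pulling such approximations back through $\exp_x$ produces regular boundary points of $C$ arbitrarily close to $x$, contradicting the hypothesis. Finally, injectivity and continuity of the resulting map $B_\rho(0_x) \cap T_xC \to C$ are inherited from the corresponding properties of $\exp_x$ on $B_\rho(0_x) \subseteq T_xM^*$.
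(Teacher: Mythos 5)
Your proposal takes a genuinely different route from the paper's, but as written it has a circularity and several unfilled gaps.

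The paper's proof is much more direct: after choosing $\rho$ so that $\exp_x$ is a homeomorphism $B_\rho(0_x)\to B_\rho(x)$ with constant orbit type along each radial segment $(0,\rho)$, one looks at the set $V\subset\Sigma_xC$ of directions of $C$-geodesics from $x$ that pass through a \emph{regular} point of $C$. For $v\in V$ the entire radial segment $\gamma_v|_{(0,\rho)}$ has the maximal type of $C$, and since $B_\rho(x)$ contains no regular boundary points, lemma \ref{regular point} makes the regular stratum of $C\cap B_\rho(x)$ a smooth Riemannian manifold without boundary, so $\gamma_v$ stays in $C$ for all $t<\rho$. Density of regular points gives density of $V$ in $\Sigma_xC$, and closedness of $C\cap B_\rho(x)$ finishes the argument. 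No blow-up, no boundary-convergence, no opposition argument.

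The core problem with your approach is the step showing that the endpoints $y_n$ lie in $\partial C$. The assertion that at a non-boundary point $y_n$ the direction $\tau_n^-$ opposite to the direction of approach again lies in $\Sigma_{y_n}C$ is precisely corollary \ref{minus} of the paper (applied with $y_n\in C\setminus\partial C\subseteq C\setminus E$), and its proof goes through lemma \ref{extension}, which in turn is proved \emph{using} the lemma you are trying to establish. So invoking it here is circular, and you offer no independent argument. Note that this claim is genuinely nontrivial in quotient spaces: tangent cones of $C$ at interior points can be nonlinear (example \ref{example 2}), and the "opposite" direction is only defined through a lift; the conclusion is exactly what corollary \ref{minus} delivers via the geodesic-extension machinery.

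Beyond that, the last two steps are stated as goals rather than proved. You flag the key structural claim — that boundary points of $T_xC$ at positive distance from $0$ are limits of regular boundary points of the cone $T_xC$ — as "where the main work lies" without giving the argument. And the "pulling back through $\exp_x$" step implicitly uses that $\exp_x$ carries a neighborhood of $y_\infty$ in $T_xC$ into $C$ (and sends regular boundary points of the cone to regular boundary points of $C$), which is essentially the surjectivity conclusion you are trying to prove. The invocation of lower semicontinuity of the Alexandrov boundary under GH convergence is correct in spirit but should at least note there is no collapse (the rescaled $C$ and $T_xC$ have equal dimension). I recommend replacing the contradiction/blow-up strategy by the paper's direct argument, which only needs lemma \ref{regular point} and the density of regular points — both already available at this stage of the development.
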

\begin{proof}
We may assume that $\dim C \geq 1$, since otherwise the statement is trivial. Let $x \in C$ be not contained in the closure of the set of regular boundary points. By the slice theorem there exists $\rho > 0$ such that $\exp_x : B_\rho(0_x) \to M^*$ is well defined and injective and the type of $\exp_x(tv)$ is the same for all $0 < t \leq \rho$ for every fixed $v \in \Sigma_xM^*$. After possibly decreasing $\rho$ we may further assume that $B_\rho(x) \cap C$ is closed in $B_\rho(x)$ and $B_\rho(x)$ does not contain any regular boundary point of $C$. Let $V \subset \Sigma_xC$ denote the set of directions that come from geodesics of $C$ starting at $x$ and which contain a regular point of $C$. Then all the geodesics $\gamma_v : [0,\rho[ \to M^*$ with initial conditions $\gamma_v(0) = x$ and $\dot \gamma_v(0) = v$ with $v \in V$ are defined. In fact it follows that $\gamma_v(t) \in C$ for all $0 \leq t < \rho$ for all $v \in V$, since $B_\rho(x)$ intersected with the set of regular points of $C$ is a smooth Riemannian manifold without boundary by lemma \ref{regular point}. Since the set of regular points is dense in $C$, it follows that $V$ is dense in $\Sigma_xC$. Then the claim follows, since $B_\rho(x) \cap C$ is closed in $B_\rho(x)$.
\end{proof}
Let $C \subseteq M^*$ be locally convex and $p \in M$ with $x = \pi(p)\in C$. Considering the map $d\pi_p : N_p(\G \ast p) \to T_xM^*$ it is clear that a necessary condition for a neighborhood $U$ of $x$ in $C$ to lift to a smooth submanifold of $M$ without boundary is that $d\pi_p^{-1}(T_xC) \subseteq N_p(\G \ast p)$ is a linear subspace. We will see in corollary \ref{submanifold} that this condition is also sufficient if it holds at every $q \in U$. This motivates the the following definition.
\begin{definition}
Let $x \in M^*$ and $W \subseteq T_xM^*$. Then $W$ is called linear if $d\pi_p^{-1}(W) \subseteq N_p(\G \ast p)$ is a linear subspace for some $p \in M$ with $\pi(p) = x$. If $C \subseteq M^*$ is locally convex and $x \in C$ then $x$ is called a linear point of $C$ if $T_xC$ is linear.
\end{definition}
\begin{cor}\label{submanifold}
 Let $C \subseteq M^*$ be locally closed and locally convex. Then $\pi^{-1}(C) \subseteq M$ is a smooth submanifold of $M$ without boundary if and only if all points $x \in C$ are linear.
\end{cor}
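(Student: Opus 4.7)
The plan is to work locally via the slice theorem, using the commutative relation $\pi\circ\exp_p=\exp_x\circ d\pi_p$ on a normal slice at $p\in\pi^{-1}(x)$. For the forward direction, suppose $\hat C:=\pi^{-1}(C)$ is a smooth submanifold of $M$ without boundary. Fix $x\in C$ and $p\in\pi^{-1}(x)$; since $\hat C$ is $\G$-invariant and smooth, and since $\G\ast p\subseteq\hat C$ forces transversality of $\hat C$ with a slice $S_p$ at $p$, the set $V_p:=T_p\hat C\cap N_p(\G\ast p)$ is a $\G_p$-invariant linear subspace, and $\hat C\cap S_p$ is a smooth submanifold of $S_p$ through $p$ with tangent $V_p$. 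Blowing up $\hat C\cap S_p$ at $p$ and pushing forward by $d\pi_p$ identifies $T_xC$ with $d\pi_p(V_p)$ inside $T_xM^*=N_p(\G\ast p)/\G_p$. The $\G_p$-invariance of $V_p$ then yields $d\pi_p^{-1}(T_xC)=V_p$, a linear subspace, so $x$ is linear.

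For the reverse direction, assume every $x\in C$ is linear. The key intermediate claim is that $C$ has no regular boundary point. Suppose for contradiction $y\in\partial C$ is regular in $C$. Pick $p'\in\pi^{-1}(y)$ and set $F:=\mathrm{Fix}(\G_{p'})\subseteq N_{p'}(\G\ast p')$; then $d\pi_{p'}|_F$ is a linear isomorphism onto $d\pi_{p'}(F)\subseteq T_yM^*$. Since orbit types are locally constant near $y$ in $C$, Lemma \ref{regular point} places $C$ near $y$ inside the smooth submanifold $\pi(N)$, where $N$ is the component of points of the same orbit type as $p'$, and $T_y\pi(N)=d\pi_{p'}(F)$. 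Hence $C$ is locally a closed convex subset of a Riemannian manifold, and at the boundary point $y$ its tangent cone $T_yC$ is an Alexandrov space with boundary, i.e.\ a proper closed convex cone in $d\pi_{p'}(F)$ that is \emph{not} a linear subspace. Transporting through $d\pi_{p'}|_F$, the set $F\cap d\pi_{p'}^{-1}(T_yC)$ fails to be a linear subspace of $F$; but linearity of $y$ would force $d\pi_{p'}^{-1}(T_yC)$ to be a linear subspace of $N_{p'}(\G\ast p')$, whose intersection with the linear subspace $F$ would itself be linear, a contradiction.

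With regular boundary points ruled out, Lemma \ref{exponentiating interior points} applies at every $x\in C$: there is $\rho>0$ with $\exp_x\colon B_\rho(0_x)\cap T_xC\to B_\rho(x)\cap C$ a homeomorphism. Lifting along $\pi\circ\exp_p=\exp_x\circ d\pi_p$ on the slice at $p\in\pi^{-1}(x)$ gives $\hat C\cap\exp_p(B_\rho(0_p))=\exp_p(V_p\cap B_\rho(0_p))$, where $V_p:=d\pi_p^{-1}(T_xC)$ is by hypothesis a linear $\G_p$-invariant subspace. Spreading by the $\G$-action exhibits a neighborhood of $\G\ast p$ in $\hat C$ as the associated bundle $\G\times_{\G_p}(V_p\cap B_\rho(0_p))$, a smooth submanifold of $M$ without boundary. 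The subtle step throughout is the exclusion of regular boundary points in the reverse direction: one must know that at such a $y$ the tangent cone $T_yC$ is genuinely non-linear, which couples Lemma \ref{regular point} with the identification of the Alexandrov boundary and the topological submanifold boundary for closed convex subsets of Riemannian manifolds in the sense of Cheeger--Gromoll; once this is in hand, the rest is a diagram chase through the commuting square of exponentials.
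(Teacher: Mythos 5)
Your proof is correct and takes essentially the same route as the paper: rule out regular boundary points via Lemma \ref{regular point} (a regular boundary point would have half-space tangent cone inside the orbit-type stratum $\pi(N)$, violating linearity), then invoke Lemma \ref{exponentiating interior points} to identify $B_\rho(x)\cap C = \exp_x(B_\rho(0_x)\cap T_xC)$, and lift through the slice at $p$ using $d\pi_p^{-1}(T_xC)$ linear and $\G_p$-invariant to exhibit a tubular neighborhood of $\G\ast p$ in $\pi^{-1}(C)$ as the associated bundle $\G\times_{\G_p}(V_p\cap B_\rho)$. You spell out in detail the step the paper leaves implicit (why Lemma \ref{regular point} plus linearity excludes regular boundary points), but the argument is the same.
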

\begin{proof}
The only if part is easy, so we only prove the if part. Let $x \in C$ be arbitrary. It is enough to show that an open neighborhood of $\pi^{-1}(\{x\})$ in $\pi^{-1}(C)$ is a smooth submanifold without boundary of $M$. By lemma \ref{regular point} it follows that $C$ does not contain a regular boundary point. Therefore, by lemma \ref{exponentiating interior points}, for all sufficiently small $\rho > 0$ we have $$\exp_x(B_\rho(0_x) \cap T_xC) = B_\rho(x) \cap C.$$ Let $p \in M$ with $\pi(p) = x$ and set $V^{< \rho} := d\pi_p^{-1}(B_\rho(0_x) \cap T_xC)$. Then 
$$\pi^{-1}(C) \cap B_\rho(\G \ast p) = \G \ast \exp_p(V^{< \rho}).$$
Choosing $\rho > 0$ sufficiently small, and since $d\pi_p^{-1}(T_xC)$ is a $\G_p$-invariant linear subspace of $N_p(\G \ast p)$, it follows that $\G \ast \exp_p(V^{< \rho})$ is a smooth submanifold of $M$ without boundary. 
\end{proof}
\begin{definition}
 A geodesic $c$ of $M$ is called horizontal if $\dot c(t)$ is perpendicular to $\G \ast c(t)$ for all $t$. A curve $\gamma$ in $M^*$ is called a horizontal geodesic if $\gamma = \pi \circ c$, where $c$ is a horizontal geodesic of $M$.
\end{definition}
A geodesic of $M^*$ is also a horizontal geodesic but a horizontal geodesic in general is a broken geodesic. Observe that a horizontal geodesic of $M^*$ defined on some nonempty open interval can always be uniquely extended to a horizontal geodesic of $M^*$ defined on $\mathbb R$ given that $M$ is complete. It is convenient to make the following definition which extends the standard terminology.
\begin{definition}
Let $x \in M^*$ and $v \in T_xM^*$. Then 
$$\exp_x(tv) := \pi(\exp_p(t\hat v)),$$
where $p \in M$ with $\pi(p) = x$ and $\hat v \in N_p(\G \ast p)$ with $d\pi_p(\hat v) = v$.
\end{definition}
Consequently $\exp_x$ is defined on all of $T_xM^*$ if $M$ is complete and the map $t \mapsto \exp_x(tv)$ is a horizontal geodesic for all $v \in T_xM^*$. 
\begin{lemma}\label{extension}
 Let $C \subseteq M^*$ be locally closed and locally convex and $\gamma : [0,l] \to C$ be a horizontal geodesic such that $\gamma(l)$ is not contained in the closure of the set of nonlinear boundary points of $C$. Then there exists $\epsilon > 0$ and an extension of $\gamma$ to a horizontal geodesic $\gamma : [0,l + \epsilon] \to C$. In particular if $C$ is closed and does not contain any nonlinear boundary point we have $C = \exp_x(T_xC)$ for all $x \in C$.
\end{lemma}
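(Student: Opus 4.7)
The plan is to work with the lift $\gamma = \pi \circ c$ for a horizontal geodesic $c \colon [0, l] \to M$; using completeness of $M$ (valid in the applications), extend $c$ locally in $M$ to $c \colon [0, l + \epsilon_0] \to M$, and let $\tilde\gamma := \pi \circ c$. The goal is to show $\tilde\gamma(t) \in C$ for all $t \in [l, l+\epsilon]$ for some $\epsilon > 0$. Set $x := \gamma(l)$, $p := c(l)$, $v := d\pi_p(\dot c(l)) \in T_xM^*$, and $L := d\pi_p^{-1}(T_xC) \subseteq N_p(\G \ast p)$. From $\tilde\gamma(l-t) = \exp_x(-tv) \in C$ for small $t > 0$ it follows immediately that $-\dot c(l) \in L$, equivalently $-v \in T_xC$; the task reduces to checking $v \in T_xC$ and $\exp_x(sv) \in C$ for small $s > 0$.

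For $x \notin \partial C$: the set of boundary points of an Alexandrov space is closed, so $x$ has a neighborhood in $C$ disjoint from $\partial C$ and a fortiori from the set of regular boundary points. Lemma \ref{exponentiating interior points} then furnishes $\rho > 0$ with $\exp_x \colon B_\rho(0_x) \cap T_xC \to B_\rho(x) \cap C$ a homeomorphism. Moreover, at an interior point of a convex subset the tangent cone $T_xC$ is symmetric under $w \mapsto -w$ (geodesics through an interior point of a convex set can be prolonged in both directions, and in the Euclidean cone $T_xM^* = N_p(\G \ast p)/\G_p$ negation is well-defined). Hence $v \in T_xC$ and $\exp_x(sv) \in C$ for all $s \in [0, \rho)$.

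For $x \in \partial C$: by hypothesis $x$ is linear, so $L$ is a genuine linear subspace of $N_p(\G \ast p)$. In particular $-\dot c(l) \in L$ gives $\dot c(l) \in L$, so $v \in T_xC$. The difficulty is establishing $\exp_x(sv) \in C$ for small $s > 0$. Lemma \ref{exponentiating interior points} does not directly apply because regular linear boundary points of $C$ may accumulate at $x$, and corollary \ref{submanifold} is unavailable on any neighborhood of $x$ in $C$ since the hypothesis does not exclude nonlinear interior points of $C$ (as genuinely occur in example \ref{example 2}). The plan is to approximate: use density of regular interior points of $C$ (lemma \ref{maximal type}) to select a sequence $x_n \to x$ of such points, with lifts $p_n \to p$. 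The linear structure of $L$ together with continuity of tangent subspaces at linear points of $C$ yields horizontal vectors $w_n \in N_{p_n}(\G \ast p_n)$ with $d\pi_{p_n}(w_n) \in T_{x_n}C$ and $w_n \to \dot c(l)$. The interior case then gives horizontal geodesics $c_n$ whose projections $\pi \circ c_n$ stay in $C$ on an interval $[0, \epsilon_n)$. Local closedness of $C$ and a uniform lower bound on $\epsilon_n$---the principal technical point and main obstacle---deliver $\tilde\gamma([l, l + \epsilon]) \subseteq C$ in the limit.

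For the ``in particular'' statement, iterate the extension: when $C$ is closed with no nonlinear boundary points, the hypothesis of the lemma is satisfied at every endpoint of every horizontal geodesic in $C$, so horizontal geodesics starting at $x$ in directions of $T_xC$ never leave $C$, giving $\exp_x(T_xC) \subseteq C$. The reverse inclusion $C \subseteq \exp_x(T_xC)$ is obtained by joining $x$ to any $y \in C$ by a minimal geodesic in $C$ (existing by convexity and closedness), which is horizontal with initial direction in $T_xC$.
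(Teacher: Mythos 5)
Your argument has two gaps, one of them genuine. The interior case rests on the assertion that at an interior point $x$ of a locally convex subset the tangent cone $T_xC$ is symmetric under the negation $w \mapsto -w$ of $T_xM^*$, justified by the remark that ``geodesics through an interior point of a convex set can be prolonged in both directions.'' But that is exactly what the lemma is asserting, and the symmetry statement is precisely Corollary~\ref{minus}, which the paper derives \emph{from} Lemma~\ref{extension}; so the reasoning is circular. The point is nontrivial: at a nonlinear interior point (as in Example~\ref{example 2}) $d\pi_p^{-1}(T_xC)$ is not a linear subspace, so there is no algebraic reason it should be closed under $v \mapsto -v$. The paper's proof handles this by induction on $\dim C$: after showing that $\Sigma_xC$ has no nonlinear boundary point (using Lemma~\ref{exponentiating interior points}), it applies the induction hypothesis to extend a geodesic $\alpha$ in $\Sigma_xC$ starting at $v = \dot\gamma_-(0)$ for arc length $\pi$; because $\Sigma_xM^*$ is a $\G_p$-quotient of a round sphere, a horizontal geodesic of length $\pi$ from $v$ must land at the antipode $v^- = d\pi_p(-\hat v)$, so $v^- \in \Sigma_xC$, and one more application of Lemma~\ref{exponentiating interior points} finishes. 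That mechanism is the whole substance of the proof, and your write-up replaces it with an unsupported claim.

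The boundary case contains a misconception rather than a real obstruction. You worry that Lemma~\ref{exponentiating interior points} may fail because ``regular linear boundary points'' could accumulate at $x$; but there are no such points. By Lemma~\ref{regular point}, at a regular boundary point $y$ of $C$ the preimage of a small neighborhood of $y$ is a smooth manifold with boundary and $\pi^{-1}(y)$ lies on that boundary, so $d\pi_p^{-1}(T_yC)$ is a half-space and $y$ is nonlinear. Consequently the closure of the set of regular boundary points is contained in $E$, and the hypothesis $\gamma(l) \notin E$ already guarantees the hypothesis of Lemma~\ref{exponentiating interior points}. Once you have $v \in T_xC$ (which does follow correctly from linearity of $L$), that lemma gives $\exp_x(sv) \in C$ for all small $s$, so the whole approximation scheme — and the uniform lower bound on $\epsilon_n$ that you flag as the main obstacle and leave unresolved — is unnecessary. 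The argument for the ``in particular'' clause is fine.
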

\begin{proof}
If $\dim C = 0$, i.e. $C$ consists of a single point, the claim is trivial. Otherwise we argue by induction on $n = \dim C \geq 1$: First assume that $\dim C = 1$. 
Then $C$ is either isometric to a circle, the real line, $]0,\infty[$, $[0,\infty[$ or a possibly noncomplete bounded interval. If $C$ is a circle or the real line it has empty boundary and every horizontal geodesic contained in $C$ is in fact a geodesic and can be extended infinitely within $C$ so the claim holds. If $C = [a,b]$ is a closed interval it is clear that every horizontal geodesic $\gamma$ in $C$ can be extended within $C$ until it hits the boundary of $C$. Therefore, we have to show that a geodesic $\gamma : [0,l] \to C$ with $\gamma(l) \in \partial C$ can be extended to a  horizontal geodesic $\gamma : [0,l + \epsilon] \to C$ for some $\epsilon > 0$, given that the boundary point $\gamma(l)$ is a linear point of $C$. This is not difficult using the fact that $\G_p$ acts with cohomogeneity one on the linear space $d\pi_p^{-1}(T_{\gamma(l)}C)$, where $\pi(p) = \gamma(l)$. The details are left to the reader. The remaining cases follow by combination of the arguments of this two cases.

Now let $\dim C \geq 2$ and the claim hold for smaller dimensions. Denote by $E$ the closure of the set of nonlinear boundary points of $C$. 
Let $\gamma : [0,l] \to C$ be a horizontal arc length geodesic such that $\gamma(l) \in C \setminus E$. Let $\gamma_-(t) = \gamma(l - t)$ and $v = \dot \gamma_- (0) \in \Sigma_{\gamma(l)}C$. Since $\dim C \geq 2$ and $C$ is locally closed, $\Sigma_{\gamma(l)}C$ is a closed and convex subset of $\Sigma_{\gamma(l)}M^*$ of dimension $\dim C - 1$. We show that $\Sigma_{\gamma(l)}C$ contains no nonlinear boundary point: By lemma \ref{exponentiating interior points} and since $x \in C \setminus E$, there exists $\rho > 0$ such that 
\begin{align}\label{im late}
\exp_x : B_\rho(0_x) \cap T_xC \to B_\rho(x) \cap C
\end{align}
is a homeomorphism and all boundary points of $B_\rho(x) \cap C$ are linear. 
%Let $\log : B_\rho(x) \cap C \to T_xC \cap B_\rho(0_x)$ denote the inverse of $\exp_x$. 
Let $y \in \partial (B_\rho(x) \cap C)$. Because $y$ is linear it follows again by lemma \ref{exponentiating interior points} that a neighborhood $U$ of $y$ in $B_\rho(x) \cap C$ lifts to a smooth submanifold of $M$ under $\pi$. Since \eqref{im late} is a homeomorphism and the exponential map of $M$ is smooth, it follows that all points $v \in \partial (B_\rho(0_x) \cap T_xC)$ are linear. Since $T_xC$ is a cone, the claim follows.
%Then it follows that $\log(U) \subset T_xM^*$ lifts under $d\pi_p$ to a smooth submanifold of $N_p(\G \ast p)$ as well. In particular 

Now let $\tau > 0$ and $\alpha : [0,\tau] \to \Sigma_{\gamma(l)}C$ be an arc length geodesic with $\alpha(0) = v$. Then we may apply the induction hypothesis to find an extension of $\alpha$ to a  horizontal geodesic $\alpha : [0,\infty[ \to \Sigma_{\gamma(l)}C$. Let $p \in M$ and $\hat v \in N_p(\G \ast p)$ with $v = d\pi_p(\hat v)$. Since $\alpha(0) = v$ it follows that $\alpha(\pi) = d\pi_p(-\hat v) =:v^- \in \Sigma_{\gamma(l)}C.$ 
%Since $v$ is regular, also $v^-$ is regular. 
Now, by lemma \ref{exponentiating interior points} there exists an $\epsilon > 0$ such that $\exp_{\gamma(l)}(tv^-) \in C$ for all $0 \leq t \leq \epsilon$. It follows that the horizontal geodesic of $M^*$ given by
$$t \mapsto \exp_{\gamma(0)}(t\dot \gamma(0))$$
maps to $C$ for $t \in [0,l + \epsilon]$ and we are done.
\end{proof}
The set of nonlinear boundary points of $C$ plays an important role for our arguments. For example as we see from corollary \ref{submanifold} this set needs to be empty if we want $\pi^{-1}(C) \subseteq M$ to be a smooth submanifold without boundary. Therefore we make the following definition.
\begin{definition}
Let $C \subseteq M^*$ be locally closed and locally convex. Then we denote by $E$ the closure of the set of nonlinear boundary points of $C$.
\end{definition}

\begin{cor}\label{minus}
 Let $C \subseteq M^*$ be locally closed and locally convex. Let $x \in C \setminus E$ and $p \in M$ with $\pi(p) = x$. Then for every $v \in N_p(\G \ast p)$ with $d\pi_p(v) \in T_xC$  also $d\pi_p(-v) \in T_xC$.
\end{cor}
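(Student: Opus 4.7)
The strategy is to lift the direction $d\pi_p(v)\in T_xC$ to $M$ as a horizontal geodesic starting at $p$ with velocity $v$, project it to a horizontal geodesic in $M^*$, and show the projection lies in $C$ on a two-sided neighborhood of $x$. The backward branch will then realize $d\pi_p(-v)$ as an initial velocity of a curve in $C$ starting at $x$.

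As a preliminary I claim that $x\in C\setminus E$ implies $x$ is not in the closure of the set of regular boundary points of $C$, so that Lemma \ref{exponentiating interior points} applies. Indeed, if $y$ is a regular boundary point of $C$ and $\pi(p')=y$, then Lemma \ref{regular point} gives a neighborhood $U$ of $y$ in $C$ whose lift $\hat U=\pi^{-1}(U)$ is a smooth $\G$-invariant submanifold of $M$ with nonempty (possibly nonsmooth) boundary at $p'$, containing the orbit $\G\ast p'$. From this one sees $d\pi_{p'}^{-1}(T_yC)=T_{p'}\hat U\cap N_{p'}(\G\ast p')$, and since $\hat U$ has boundary at $p'$ this intersection inherits nonempty Alexandrov boundary inside $N_{p'}(\G\ast p')$ and therefore cannot be a linear subspace. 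Thus every regular boundary point is nonlinear, and the closure of the regular boundary points is contained in $E$.

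Assume now $v\neq 0$ (the case $v=0$ is immediate) and, after rescaling, $\|v\|=1$. By Lemma \ref{exponentiating interior points} there is $\rho>0$ such that $\exp_x$ is a homeomorphism from $B_\rho(0_x)\cap T_xC$ onto $B_\rho(x)\cap C$; in particular
\[
\gamma(t):=\pi(\exp_p(tv))=\exp_x(t\,d\pi_p(v))\in C\quad\text{for }t\in[0,\rho].
\]
The reversed curve $\beta(t):=\gamma(\rho-t)$ is a horizontal geodesic in $C$ with $\beta(\rho)=x\notin E$, so Lemma \ref{extension} yields $\delta>0$ and an extension $\beta:[0,\rho+\delta]\to C$. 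In $M$ the lift of $\beta|_{[0,\rho]}$ is $t\mapsto\exp_p((\rho-t)v)$, which reaches $p$ at $t=\rho$ with velocity $-v$, so its unique smooth geodesic continuation is $t\mapsto\exp_p(-(t-\rho)v)$. Projecting gives
\[
\beta(\rho+s)=\pi(\exp_p(-sv))=\exp_x(s\,d\pi_p(-v))\in C\quad\text{for }s\in[0,\delta],
\]
and $d\pi_p(-v)$ is realized as the initial velocity of a curve in $C$ starting at $x$, hence $d\pi_p(-v)\in T_xC$.

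The only substantive step is the preparatory observation that regular boundary points are nonlinear, which is what legitimizes the use of Lemma \ref{exponentiating interior points} under the hypothesis $x\in C\setminus E$; after that the proof amounts to a short bookkeeping of the horizontal geodesic through $p$ combined with a single application of Lemma \ref{extension}.
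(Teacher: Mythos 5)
Your proof is correct and follows the same route as the paper's (which simply appeals to Lemma~\ref{extension}): produce the short horizontal geodesic $\gamma(t)=\exp_x(t\,d\pi_p(v))$ in $C$ via Lemma~\ref{exponentiating interior points}, reverse it, and extend past $x$ by Lemma~\ref{extension} using the uniqueness of horizontal geodesic continuation. Your preliminary observation that regular boundary points are nonlinear (so that $C\setminus E$ misses the closure of regular boundary points, legitimizing Lemma~\ref{exponentiating interior points}) is a detail the paper uses tacitly — it also appears silently inside the proof of Lemma~\ref{extension} itself — and you are right to spell it out.
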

\begin{proof}
 This follows easily from lemma \ref{extension}.
\end{proof}

From corollary \ref{submanifold} we see that the regularity of $\pi^{-1}(C)$ is determined by the tangent cones of $C$. Another object connected to the local geometry of the embedding $C \subset M^*$ at $x$ is the normal $N_xC$ to $C$ at $x$. 
\begin{definition}
 Let $C \subseteq M^*$ be locally convex and $x \in C$. Then the normal cone to $C$ at $x$ is defined as
 $$N_xC := T_xC^\perp = \{v \in T_xM^* \mid \measuredangle (v,T_xC) \geq \pi/2\} \cup \{0_x\},$$
 where $0_x$ denotes the apex of $T_xM^*$. By convention the normal cone to a one point set $\{x\}$ is given by $T_xM^*$. $N_xC$ is called trivial if it equals $\{0_x\}$.
\end{definition}
Given a convex subset $C$ of a Riemannian manifold and $x \in C$ we have $N_xC^\perp = T_xC$, so the the geometry of the normal cone and the tangent  cone determine each other. For convex subsets of quotient spaces this is no longer true as it can be seen from examples \ref{example 1} and \ref{example 2}. This is quite unfortunate, since it is a general phenomenon that the structure of the normal cone is easier to determine than the one of the tangent cone, as it will appear frequently throughout the rest of the arguments. The main reason for this is the following observation.
\begin{lemma}
Let $C \subseteq M^*$ be locally convex. Let $x \in C$ and $p \in M$ with $\pi(p) = x$. Then $d\pi^{-1}_p(N_xC) \subseteq N_p(\G \ast p)$ is convex.
\end{lemma}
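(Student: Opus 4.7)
The plan is to exhibit $d\pi_p^{-1}(N_xC)$ as a polar cone in the Euclidean vector space $N_p(\G\ast p)$, which is then automatically convex. The whole argument is a translation of the angle condition defining $N_xC$ into a family of linear inequalities downstairs.

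First I would unwind the angle in the quotient tangent cone. By the slice theorem (already recalled at the start of the preliminaries section), $T_xM^*$ is the Euclidean cone $N_p(\G\ast p)/\G_p$, and the angle metric on $\Sigma_xM^*$ is the quotient of the round metric on the unit sphere of $N_p(\G\ast p)$ by the orthogonal $\G_p$-action. Consequently, for nonzero $w,\hat u\in N_p(\G\ast p)$,
$$\cos\measuredangle\bigl(d\pi_p(w),d\pi_p(\hat u)\bigr)=\sup_{g\in\G_p}\frac{\langle w,g\hat u\rangle}{\|w\|\,\|\hat u\|},$$
while the degenerate cases $w=0$ or $\hat u=0$ are handled trivially, since $0_x$ belongs to $T_xC$ and is explicitly included in $N_xC$ by convention.

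Next I would translate the defining inequality of $N_xC$ into a linear condition. By definition $d\pi_p(w)\in N_xC$ means $\measuredangle(d\pi_p(w),u)\geq\pi/2$ for every $u\in T_xC$; writing $u=d\pi_p(\hat u)$ with $\hat u\in d\pi_p^{-1}(T_xC)$ and applying the formula above, this is equivalent to $\langle w,g\hat u\rangle\leq 0$ for all $g\in\G_p$ and all $\hat u\in d\pi_p^{-1}(T_xC)$. Now $d\pi_p^{-1}(T_xC)$ is $\G_p$-invariant—it is the preimage of a $\G_p$-saturated set—so letting $g$ vary produces no new constraints, and the condition collapses to
$$\langle w,\hat u\rangle\leq 0\qquad\text{for every }\hat u\in d\pi_p^{-1}(T_xC).$$

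This realizes $d\pi_p^{-1}(N_xC)$ as the intersection of the closed half-spaces $\{w:\langle w,\hat u\rangle\leq 0\}$, as $\hat u$ ranges over $d\pi_p^{-1}(T_xC)$, i.e.\ as the polar cone of $d\pi_p^{-1}(T_xC)$ inside $N_p(\G\ast p)$. Polar cones are closed convex cones, and the lemma follows. I do not anticipate any substantive obstacle: the only thing to verify carefully is the identification of the quotient angle with the $\G_p$-infimum angle upstairs, which is a direct consequence of the slice theorem, so the argument is essentially a formal manipulation once that identification is in hand.
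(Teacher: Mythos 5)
Your proof is correct and follows essentially the same route as the paper, which also rewrites $d\pi_p^{-1}(N_xC)$ as $\bigcap_{\hat u \in d\pi_p^{-1}(T_xC)}\{w : \langle w,\hat u\rangle \le 0\}$, i.e.\ as a polar cone, and concludes convexity from that. The paper leaves the quotient-angle identity and the $\G_p$-invariance of $d\pi_p^{-1}(T_xC)$ implicit; you spell out those two steps, but the argument is the same.
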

\begin{proof}
Observe that
\begin{align*}
d\pi_p^{-1}(N_xC) &= d\pi_p^{-1}(T_xC^\perp) \\
&= \{v \in N_p(\G \ast p) \mid \sup\{g_p(u,v) \mid {u \in d\pi^{-1}_p(T_xC)} \} \leq 0\}\\
&= \bigcap_{u \in  d\pi^{-1}_p(T_xC)}\{v \in N_p(\G \ast p) \mid g_p(u,v) \leq 0\}.
\end{align*}
Then it is easy to check that the last term of this equation defines a convex set.
\end{proof}
As illustrated by example \ref{example 2} the tangent cone of an interior point of a convex set may fail to be linear while the upcoming corollary shows that the normal cone to an interior point is indeed a linear space.

\begin{cor}\label{linear normal}
 Let $C \subseteq M^*$ be locally closed and locally convex. Let $x \in C \setminus E$. Then $N_xC$ is linear.
\end{cor}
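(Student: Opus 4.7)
The plan is to combine Corollary \ref{minus} with the explicit description of $d\pi_p^{-1}(N_xC)$ obtained in the preceding lemma's proof. The key observation will be that Corollary \ref{minus} forces the cone $V := d\pi_p^{-1}(T_xC) \subseteq N_p(\G \ast p)$ to be symmetric under negation, and this symmetry will promote the convex cone $d\pi_p^{-1}(N_xC)$ all the way up to a linear subspace.

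First, I would fix $p \in M$ with $\pi(p) = x$ and let $V = d\pi_p^{-1}(T_xC)$. Applying Corollary \ref{minus} pointwise, whenever $u \in V$ we also have $-u \in V$, so $V = -V$. Next, I would recall from the proof of the previous lemma the identity
\begin{align*}
d\pi_p^{-1}(N_xC) = \bigcap_{u \in V} \{v \in N_p(\G \ast p) \mid g_p(u,v) \leq 0\}.
\end{align*}

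Now comes the main (but brief) step: for any $v \in d\pi_p^{-1}(N_xC)$ and any $u \in V$, the symmetry $-u \in V$ gives both $g_p(u,v) \leq 0$ and $g_p(-u,v) \leq 0$, so $g_p(u,v) = 0$. Hence
\begin{align*}
d\pi_p^{-1}(N_xC) \;=\; \{v \in N_p(\G \ast p) \mid g_p(u,v) = 0 \text{ for all } u \in V\} \;=\; V^{\perp},
\end{align*}
which is a linear subspace of $N_p(\G \ast p)$. By definition, this means $N_xC$ is linear, completing the proof.

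There is no real obstacle here: the work is all done upstream, in Corollary \ref{minus} (which itself relies on Lemma \ref{extension} and thus on the hypothesis $x \in C \setminus E$), and in the convex-cone computation from the preceding lemma. The only care needed is to make sure the symmetry of $V$ is applied correctly to convert the one-sided inequalities defining the polar cone into two-sided equalities defining an orthogonal complement.
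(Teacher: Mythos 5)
Your proof is correct and takes essentially the same approach as the paper: both invoke Corollary \ref{minus} to get the $-\mathrm{Id}$-invariance of $V=d\pi_p^{-1}(T_xC)$ and then use the polar-cone description of $d\pi_p^{-1}(N_xC)$ from the preceding lemma. The paper states the final step abstractly (a convex cone invariant under $-\mathrm{Id}$ is a linear subspace), whereas you compute the polar cone explicitly as $V^\perp$; this is the same idea, just spelled out.
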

\begin{proof}
Let $x \in C$ and $p \in M$ with $\pi(p) = x$. Set $N := d\pi_p^{-1}(N_xC)$. Then $N$ is a convex cone in $N_p(\G \ast p)$. Since $d\pi^{-1}_p(T_xC)$ is invariant under $\operatorname{-Id}$ by corollary \ref{minus}, the same holds for $N$. But then, since $N$ is convex, it is a linear subspace of $N_p(\G \ast p)$.
\end{proof}

We conclude this section with a useful observation. Let $C \subseteq A$ be a locally convex and locally closed subset of an Alexandrov space. Then we denote by $\partial^\tau C$ the topologicaly boundary of $C$ as a subset of $A$. In particular if $\dim C < \dim A$ we have $\partial^\tau C = C$. The following lemma discusses the situation when $\dim C = \dim A$.
\begin{lemma}\label{boundary}
 Let $A$ be an Alexandrov space and $C \subseteq A$ be locally convex and locally closed with $\dim C = \dim A$. Then
 $$\partial C = \partial^\tau C \cup (C \cap \partial A).$$
\end{lemma}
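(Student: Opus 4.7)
The natural approach is induction on the common dimension $n = \dim A = \dim C$, using the recursive definition of the Alexandrov boundary ($x \in \partial X$ iff $\partial\Sigma_xX \neq \emptyset$, with the base cases being finite sets in dimension $0$ and endpoints of intervals in dimension $1$). The base cases $n \leq 1$ reduce to a direct check: a locally convex, locally closed subset of full dimension in an interval is locally a subinterval, and both sides of the claimed equality can be computed by hand.

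For the inductive step, the key preliminary observation is that for every $x \in C$ the space of directions $\Sigma_xC$ is a closed convex subset of $\Sigma_xA$ of full dimension $n - 1$, and moreover
\[
\Sigma_xC = \Sigma_xA \iff x \text{ lies in the topological interior of } C \text{ in } A.
\]
Closedness and convexity of $\Sigma_xC \subseteq \Sigma_xA$ follow from local closedness and local convexity of $C$ (compare the convex tangent cone lemma proved earlier), and equidimensionality follows from the general fact that Alexandrov tangent cones are equidimensional with the space. The nontrivial direction of the equivalence is "only if": if $\Sigma_xC = \Sigma_xA$, then $T_xC = T_xA$, and the Alexandrov-geometric version of lemma \ref{exponentiating interior points} (a closed convex subset of an Alexandrov space of full dimension is locally the exponential image of a neighborhood in its tangent cone) forces a whole neighborhood of $x$ in $A$ to lie in $C$.

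Granting this, the two inclusions split cleanly. If $x$ is topologically interior to $C$, then $\Sigma_xC = \Sigma_xA$ and hence $x \in \partial C \Longleftrightarrow \partial\Sigma_xA \neq \emptyset \Longleftrightarrow x \in \partial A$; this yields both $(C \cap \partial A) \setminus \partial^\tau C \subseteq \partial C$ and $\partial C \setminus \partial^\tau C \subseteq \partial A$, taking care of the inclusion $\partial C \subseteq \partial^\tau C \cup (C \cap \partial A)$. If instead $x \in \partial^\tau C$, then $\Sigma_xC \subsetneq \Sigma_xA$ is a proper closed convex subset of full dimension $n-1 \geq 1$; by connectedness of $\Sigma_xA$ one can join a point of $\Sigma_xA \setminus \Sigma_xC$ to an interior point of $\Sigma_xC$ by a path which must cross $\partial^\tau\Sigma_xC$, so $\partial^\tau\Sigma_xC \neq \emptyset$. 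By the inductive hypothesis applied to $\Sigma_xC \subseteq \Sigma_xA$, we get $\partial^\tau\Sigma_xC \subseteq \partial\Sigma_xC$, so $\partial\Sigma_xC \neq \emptyset$ and thus $x \in \partial C$. This gives the reverse inclusion and closes the induction.

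The main obstacle is the equivalence $\Sigma_xC = \Sigma_xA \Leftrightarrow x \in \operatorname{int}^\tau(C)$ in the Alexandrov setting: the Riemannian proof via a local diffeomorphism is unavailable, and one must instead invoke the local conical structure at a point of an Alexandrov space together with the fact that a closed convex subset of full dimension is determined locally by its tangent cone.
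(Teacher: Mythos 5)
There is a genuine gap, and it is not merely a missing technicality: the key equivalence you rely on is \emph{false}. You claim that for $x \in C$ one has $\Sigma_xC = \Sigma_xA$ if and only if $x$ is a topological interior point of $C$, and in the case $x \in \partial^\tau C$ you deduce $\Sigma_xC \subsetneq \Sigma_xA$ in order to feed $\Sigma_xC$ into the induction. But the paper's own Example \ref{disc} kills the ``only if'' direction: take $A$ to be a closed half-plane and $C$ a closed disc tangent to $\partial A$ at a point $x$. Then $x \in \partial^\tau C$, yet $T_xC$ and $T_xA$ are both the same closed half-plane, so $\Sigma_xC = \Sigma_xA$. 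The conclusion $\partial C = \partial^\tau C \cup (C \cap \partial A)$ still holds there (both sides are the circle), but your intermediate step $\Sigma_xC \subsetneq \Sigma_xA$ does not, so your inductive argument never gets off the ground at such a point. No Alexandrov analogue of Lemma~\ref{exponentiating interior points} will save this, because the assertion you are trying to establish with it is simply untrue.

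The paper's proof is designed precisely to sidestep this. For $x \in \partial^\tau C$ it does \emph{not} attempt to show $\Sigma_xC \subsetneq \Sigma_xA$. Instead it picks $x_n \in A \setminus C$ with $x_n \to x$ and looks at the foot points $\gamma_n(0)$ of minimal geodesics from $C$ to $x_n$. At each foot point the first-variation inequality forces $\dot\gamma_n(0)$ to make angle $\geq \pi/2$ with $\Sigma_{\gamma_n(0)}C$, so there the strict inclusion $\Sigma_{\gamma_n(0)}C \subsetneq \Sigma_{\gamma_n(0)}A$ is automatic, the inductive hypothesis applies, and $\gamma_n(0) \in \partial C$. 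One then concludes $x \in \partial C$ because $\gamma_n(0) \to x$ and $\partial C$ is closed. This ``prove it at nearby well-behaved points, then pass to the limit'' step is the real content of the argument, and it is exactly what your proposal is missing. (Your treatment of the other inclusion and of the case $x \in \partial A$ is fine and in fact a little cleaner than the paper's doubling trick, but it does not compensate for the gap above.)
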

\begin{proof}
We argue by induction on $n = \dim C = \dim A \geq 1$. The case $n = 1$ is easy and left to the reader. Now assume that $n \geq 2$ and the claim hold for dimensions less than $n$. First let $x \in \partial C$. Then $\Sigma_xC$ is a convex subset of $\Sigma_xA$. 
Hence by the induction hypothesis $\partial \Sigma_xC = \partial^\tau \Sigma_xC \cup (\Sigma_xC \cap \partial \Sigma_xA)$. Since $\partial \Sigma_xC \neq \emptyset$ it follows that $x \in \partial^\tau C$ or $x \in C \cap \partial A$. Now let $x \in C$ with $x \in \partial^\tau C \cup \partial A$. We have to show that $\Sigma_xC$ has nonempty boundary. First let $x \in \partial A$. Assume that $\Sigma_xC$ has empty boundary. Denote by $\hat {\Sigma_xA}$ the double of $\Sigma_xA$ with projection $p : \hat{\Sigma_xA} \to \Sigma_xA$. By induction hypothesis it follows that $\Sigma_xC \cap \partial \Sigma_xA = \emptyset$. Therefore $p^{-1}(\Sigma_xC)$ is a locally convex subset $\hat {\Sigma_xA}$ as well. Since $\hat {\Sigma_xA}$ has empty boundary and so does $p^{-1}(\Sigma_xC)$ it follows from the induction hypothesis that $p^{-1}(\Sigma_xC) = \hat \Sigma_xA$. Therefore $\Sigma_xC = \Sigma_xA$. In particular $\Sigma_xC$ has nonempty boundary, a contradiction. Now let $x \in \partial^\tau C$. We have to show that $x \in \partial C$. For that let $x_n \in A \setminus C$ with $x_n \to x$. Consider a minimal arc length geodesic $\gamma_n : [0,l_n] \to A$ from $C$ to $x_n$. Then it follows that $\dot \gamma(0) \notin \Sigma_{\gamma_n(0)}C$. In particular $\partial^\tau\Sigma_{\gamma_n(0)}C$ is nonempty in $\Sigma_{\gamma_n(0)}A$. By the induction hypothesis $\partial\Sigma_{\gamma_n(0)}C \neq \emptyset$. Hence $\gamma_n(0) \in \partial C$ for all $n$. Clearly $\gamma_n(0) \to x$. Therefore, $x \in \partial C$ since $\partial C$ is closed.
\end{proof}
%==========================subsection=======================
\subsection{horizontally convex subsets}

\begin{definition}
Let $C \subseteq M^*$. $C$ is called horizontally convex if for every $x,y \in C$ every horizontal geodesic from $x$ to $y$ is contained in $C$. $C$ is locally horizontally convex if $C$ is connected and for all $x \in C$ there exist $\epsilon > 0$ such that for all $y,z \in B_\epsilon(x) \cap C$ every horizontal geodesic from $y$ to $z$ of length less than $\epsilon$ is contained in $C$.
\end{definition}

Horizontally convex subsets are connected to the quotient structure of $M^*$ more closely than convex subsets. Although horizontally convex subsets should be expected to be more rigid than convex subsets it appears to be difficult to obtain general structure results. However, in this section we will obtain the important result that the set of linear points of a horizontally convex subset $C \subseteq M^*$ is open in $C$. For that we first need to discuss a simple notion of conjugacy in $M^*$.

\begin{definition}
 Let $x,y \in M^*$ be regular points and $\gamma : I \to M^*$ be a horizontal geodesic from $x$ to $y$. Then $x$ and $y$ are conjugate along $\gamma$, if there exists a variation $\gamma_s$ of $\gamma$ via horizontal geodesics, such that $\gamma_s$ is smooth at all regular points and the variational field of $\gamma_s$ at $s = 0$ is nontrivial and vanishes at $x$ and $y$.
\end{definition}

 This definition can be extended to arbitrary points of $M^*$. However, this way the concept is easy to handle and it suffices for our needs.

\begin{lemma}\label{conjugate}
 Let $\gamma : [0,\infty[ \to M^*$ be a horizontal geodesic such that $\gamma(0)$ is regular. Then the set $\{t \in [0,\infty[ ~\mid \gamma(t) \text{ is conjugate to  } \gamma(0) \text { along } \gamma_{\vert [0,t]}\}$ is finite when intersected with any compact set. If $\gamma(0)$ and $\gamma(t)$ are not conjugate along $\gamma_{[0,t]}$ there exists an open neighborhood $U$ of $t\dot \gamma (0)$ in $T_{\gamma(0)}M^*$ such that $\exp_{\gamma(0)} : U \to M^*$ is a diffeomorphism onto its image.
\end{lemma}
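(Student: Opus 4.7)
Plan. My strategy is to lift everything to $M$ and reduce the Alexandrov-theoretic statements to a Jacobi-field analysis along a horizontal geodesic. Fix a horizontal lift $c:[0,\infty)\to M$ of $\gamma$ with $c(0)=p\in\pi^{-1}(\gamma(0))$, and write $v_0=\dot c(0)\in N_p(\G\ast p)$. Regularity of $\gamma(0)$ forces $\G_p=\mathsf H$, the principal isotropy, which by definition acts trivially on $N_p(\G\ast p)$; we therefore identify $T_{\gamma(0)}M^*=N_p(\G\ast p)$ as Euclidean vector spaces and $\exp_{\gamma(0)}(v)=\pi(\exp_p(v))$.

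First, I would translate the definition of conjugacy into a Jacobi-field condition along $c$. Any variation $\gamma_s$ of $\gamma$ by horizontal geodesics whose variation field vanishes at $\gamma(0)$ admits, after a $\G$-gauge, a lift of the form $c_s(r)=g(s)\ast\exp_p(r\,v(s))$ with $g:(-\epsilon,\epsilon)\to\G$, $v:(-\epsilon,\epsilon)\to N_p(\G\ast p)$ smooth, $g(0)=e$ and $v(0)=v_0$. A direct computation identifies the variation field of $c_s$ along $c$ as
\[
J(r)=X^*(c(r))+J_w(r),\qquad X=g'(0)\in\mathfrak g,\ w=v'(0)\in N_p(\G\ast p),
\]
where $J_w$ is the Jacobi field along $c$ with $J_w(0)=0$ and $J_w'(0)=w$. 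Because Killing fields are everywhere vertical, $d\pi(J)=d\pi(J_w)$; this is nontrivial iff $w\neq 0$ and vanishes at $\gamma(t)$ iff $J_w(t)\in T_{c(t)}(\G\ast c(t))$. Hence $\gamma(t)$ is conjugate to $\gamma(0)$ along $\gamma$ exactly when there is some $w\in N_p(\G\ast p)\setminus\{0\}$ with $J_w(t)\in T_{c(t)}(\G\ast c(t))$.

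Assuming non-conjugacy at $t$, I would next establish the local diffeomorphism. By the above characterization, the linear map $w\mapsto J_w(t)=(d\exp_p)_{tv_0}(tw)$ is injective on $N_p(\G\ast p)$ with image transverse to the vertical space $T_{c(t)}(\G\ast c(t))$. Together with the slice theorem at $c(t)$ and the dimension count
\[
\dim N_p(\G\ast p)=\dim M-\dim\G+\dim\mathsf H=\dim T_{\gamma(t)}M^*,
\]
this shows that on a sufficiently small open neighborhood $U$ of $tv_0$ in $N_p(\G\ast p)$, $\exp_p|_U$ embeds $U$ smoothly and transversally to the orbit $\G\ast c(t)$, and the composition $\pi\circ\exp_p|_U=\exp_{\gamma(0)}|_U$ descends to a diffeomorphism onto an open neighborhood of $\gamma(t)$ in $M^*$.

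For the finiteness assertion on a compact interval $[0,T]$, I would invoke a Morse-theoretic argument. Consider the energy functional on $H^1$-paths $\alpha:[0,T]\to M$ with $\alpha(0)\in\G\ast p$ and $\alpha(T)\in\G\ast c(T)$; the geodesic $c|_{[0,T]}$ is a critical point, and by the Jacobi-field characterization above the kernel of its index form at time $t$ is nontrivial exactly at the conjugate times. The standard Morse index theorem for focal points of a submanifold then gives that the Morse index of $c|_{[0,T]}$ is finite and that each conjugate time in $(0,T]$ contributes positively, forcing only finitely many conjugate times in $[0,T]$. The hard part will be making this index-form argument rigorous in the presence of intermediate singular orbits of the $\G$-action along $c$: although the endpoint orbits $\G\ast p$ and $\G\ast c(T)$ are smooth submanifolds, interior singular points require carrying out the variational analysis upstairs in $M$, where the standard focal-point machinery for smooth submanifolds applies without modification.
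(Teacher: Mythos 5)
Your characterization of conjugacy via Jacobi fields and your proof of the local-diffeomorphism statement match the paper's approach: lift $\gamma$ to a horizontal geodesic $c$ in $M$, use regularity of $\gamma(0)$ to identify $T_{\gamma(0)}M^*=N_p(\G\ast p)$ and $\exp_{\gamma(0)}=\pi\circ\exp_p$, translate the kernel of $d(\exp_{\gamma(0)})_{tv_0}$ into the condition ``$\exists\,w\in N_p(\G\ast p)\setminus\{0\}$ with $J_w(0)=0$, $J_w'(0)=w$ and $J_w(t)$ vertical,'' and conclude by dimension count and the inverse function theorem. That part is correct and is essentially the paper's proof.

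The finiteness argument, however, has a genuine gap. You propose to apply the Morse index theorem to the energy functional on paths $\alpha$ with $\alpha(0)\in\G\ast p$ and $\alpha(T)\in\G\ast c(T)$ and assert that the kernel of the index form detects the conjugate times. That kernel is the space of Jacobi fields $J$ along $c$ satisfying $J(0)\in T_p(\G\ast p)$, $J(T)\in T_{c(T)}(\G\ast c(T))$, together with the shape-operator (natural boundary) conditions at both ends. This is not equivalent to your own conjugacy condition $J(0)=0$, $J'(0)\in N_p(\G\ast p)$, $J(t)$ vertical. In particular, the restriction to $c$ of every Killing field $X^*$ with $X\in\mathfrak g$ lies in that kernel for every $T$, so (whenever the orbits are positive-dimensional) the kernel is never trivial, and its dimension is not what detects conjugacy. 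The obstruction you flag — that interior points of $c$ may lie on singular orbits — is not the real difficulty; the real difficulty is that the boundary conditions of the variational problem you set up do not match the conjugacy boundary conditions ($J(0)=0$ is a genuine point condition, not just tangency to $\G\ast p$, and there is no natural second-order condition at $c(t)$). The paper instead treats the first statement as a consequence of the fact that the regular stratum of $M^*$ is a Riemannian manifold, on which $\exp_{\gamma(0)}$ is a smooth map and its singular times along the ray are locally finite by the usual theory of conjugate points for Riemannian geodesics; if you want to run a Morse-theoretic argument upstairs in $M$, you would need to set up the variational problem so that its null space is exactly the $J_w$ with $J_w(t)$ vertical (for instance, paths starting at the point $p$, together with an appropriate quadratic functional built from the horizontal projection at the endpoint), which the current setup does not do.
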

\begin{proof}
 The first statement follows easily, since the regular part of $M^*$ is a Riemannian manifold. Now let $\gamma : [0,1] \to M^*$ be a horizontal geodesic such that $\gamma(0)$ and $\gamma(1)$ are regular points of $M^*$. Let $\hat \gamma$ be a horizontal lift of $\gamma$. Since $\gamma(0)$ is regular, we may identify $T_{\gamma(0)}M^* = N_{\hat \gamma(0)}(\G \ast \hat \gamma (0))$ and then we have $\exp_{\gamma(0)} = \pi \circ \exp_{\hat \gamma(0)}$. In particular $\exp_{\gamma(0)}$ is smooth in a neighborhood of $\dot \gamma(0)$ and if $d(\exp_{\gamma(0)})_{\dot \gamma(0)}$ has nontrivial kernel there exists a nontrivial Jacobifield $J$ along $\hat \gamma$ with $J(1) \in T_{\hat \gamma(1)}(\G \ast \hat \gamma(1))$ and $J'(0) \in N_{\hat \gamma(0)}\G \ast \hat \gamma(0)$. But then it follows that $\gamma(0)$ and $\gamma(1)$ are conjugate along $\gamma$.
\end{proof}

\begin{lemma}\label{conjugate sequence}
 Let $x \in M^*$ and $\gamma : [0,1] \to T_xM^*$ be a horizontal geodesic whose endpoints are not conjugate along $\gamma$. Let $\gamma_n : [0,1] \to nM^*$ be horizontal geodesics for $n \in \NN$ such that $\gamma_n \to \gamma$ for $n \to \infty$ with respect to the pointed Gromov-Hausdorff convergence $(nM^*,x) \to (T_xM^*,0_x)$. Then for all $\epsilon > 0$ there exists $\delta > 0$ and $N \in \NN$ such that $B^{nM^*}_\delta(\gamma_n(1)) \subset \exp^{nM^*}_{\gamma_n(0)}(B_\epsilon(\dot \gamma_n(0)))$ for all $n \geq N$.
\end{lemma}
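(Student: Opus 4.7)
The plan is to reduce the assertion to the smooth cover $nM$ and then invoke the $C^\infty$ Gromov--Hausdorff convergence $(nM,p) \to (T_pM, 0_p)$, where $p \in M$ is any point with $\pi(p) = x$. Via the $\G_p$-equivariant smooth chart $\exp_p$, the rescaled metrics on $nM$ converge in $C^\infty$ on compact sets to the flat metric on $T_pM$, and the $\G$-action converges to the linear isotropy action of $\G_p$. In particular the Riemannian exponential maps of $nM$ at points near $p$ converge in $C^\infty$ to the flat exponential of $T_pM$, which is translation. After passing to a subsequence I choose horizontal lifts $\hat\gamma_n : [0,1] \to nM$ of $\gamma_n$ converging, through the chart, to a horizontal lift $\hat\gamma : [0,1] \to T_pM$ of $\gamma$; flatness gives $\hat\gamma(t) = \hat\gamma(0) + t\dot{\hat\gamma}(0)$.

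\emph{Reduction to $nM$.} It suffices to produce $\delta > 0$ and $N \in \NN$ such that for all $n \geq N$ and each $\hat y \in B^{nM}_\delta(\hat\gamma_n(1))$ there exist $g \in \G$ and a horizontal vector $\hat w \in N_{\hat\gamma_n(0)}(\G \ast \hat\gamma_n(0))$ with $|\hat w - \dot{\hat\gamma}_n(0)| < \epsilon$ and $g \cdot \hat y = \exp^{nM}_{\hat\gamma_n(0)}(\hat w)$. Given $y \in B^{nM^*}_\delta(\gamma_n(1))$, compactness of $\G$-orbits produces a lift $\hat y$ of $y$ with $d^{nM}(\hat y, \hat\gamma_n(1)) = d^{nM^*}(y, \gamma_n(1)) < \delta$; applying the reduced claim yields $\hat w$, and since $d\pi_p$ is $1$-Lipschitz one has $d\pi(\hat w) \in B_\epsilon(\dot\gamma_n(0))$ and $y = \exp^{nM^*}_{\gamma_n(0)}(d\pi(\hat w))$.

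\emph{Proof of the reduced claim.} By the non-conjugacy hypothesis and Lemma~\ref{conjugate} applied inside $T_xM^*$, there is an open $U \subseteq B_\epsilon(\dot\gamma(0))$ on which $\exp^{T_xM^*}_{\gamma(0)}$ is a diffeomorphism onto an open image containing some ball $B^{T_xM^*}_{2\delta_0}(\gamma(1))$. Because the exponential of the flat $T_pM$ is translation, this translates to the statement that the smooth map
\[
  \Phi_\infty \colon \G_p \times B^{\mathrm{hor}}_\epsilon(\dot{\hat\gamma}(0)) \longrightarrow T_pM, \qquad (g, \hat w) \mapsto g \cdot (\hat\gamma(0) + \hat w),
\]
surjects onto $B^{T_pM}_{2\delta_0}(\hat\gamma(1))$; indeed, for $\hat z$ in that ball one uses the quotient-distance realising horizontal lift $\hat w$ of $\exp^{T_xM^*,-1}_{\gamma(0)}(\pi(\hat z))$. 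The analogous maps $\Phi_n$ built from $nM$, $\G$ and $\hat\gamma_n$ converge in $C^\infty$ on compact sets to $\Phi_\infty$. An inverse function theorem argument, carried out after dividing out the continuous symmetry of $\Phi_\infty$ by the isotropy $\G_{\hat\gamma(0)}$ of $\dot{\hat\gamma}(0)$ (so that the transversal map has bijective differential at the reference point), then shows that $\Phi_n$ still covers $B^{nM}_{\delta_0}(\hat\gamma_n(1))$ for all sufficiently large $n$ with preimages lying inside $\G \times B_\epsilon^{\mathrm{hor}}(\dot{\hat\gamma}_n(0))$. Taking $\delta = \delta_0$ completes the proof.

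The main obstacle is the final propagation step: extracting from the qualitative non-conjugacy in the limit a uniform, quantitative covering property on all $nM$ with $n \geq N$. This rests on the $C^\infty$ quality of the Gromov--Hausdorff convergence, which in turn is afforded by the smoothness of $\exp_p$ at $0_p$, together with a careful application of the inverse function theorem on a slice transverse to the isotropy action in the domain of $\Phi_\infty$.
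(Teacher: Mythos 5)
Your plan — lifting to $nM$ and invoking the smooth convergence $(nM,p)\to (T_pM,0_p)$ — is a reasonable way to try to flesh out the "carry over via Gromov--Hausdorff approximations" step that the paper leaves to the reader, but the key covering claim is wrong as stated and the inverse-function-theorem argument it is supposed to feed cannot work in this form. The map
\[
\Phi_\infty \colon \G_p\times B^{\mathrm{hor}}_\epsilon(\dot{\hat\gamma}(0))\longrightarrow T_pM,\qquad (g,\hat w)\mapsto g\cdot(\hat\gamma(0)+\hat w),
\]
has image contained in $N_p(\G\ast p)$: the lift $\hat\gamma(0)$ and the horizontal vector $\hat w$ both lie in $N_p(\G\ast p)$, and $\G_p$ preserves the splitting $T_pM=N_p(\G\ast p)\oplus T_p(\G\ast p)$. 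So $\Phi_\infty$ cannot surject onto $B^{T_pM}_{2\delta_0}(\hat\gamma(1))$ unless $\G\ast p$ is zero-dimensional. One sees the same defect numerically: after dividing by the isotropy of $\hat\gamma(0)$ the domain has dimension $\dim N_p(\G\ast p)=\dim M-\dim(\G\ast p)<\dim T_pM$, so the "transversal" differential can never be bijective onto $T_pM$. The root cause is that the pointed limit of the $\G$-action on $nM$ is \emph{not} the linear $\G_p$-action on $T_pM$: elements of $\G\setminus\G_p$ move $p$ and hence escape to infinity under rescaling; what survives in the blow-up is $\G_p$ plus a rescaled $1/n$-neighborhood of $\G_p$ in $\G$, which in the limit contributes a translation action by $T_p(\G\ast p)$. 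For the same reason the assertion that "the analogous maps $\Phi_n$ built from $nM$, $\G$ and $\hat\gamma_n$ converge in $C^\infty$ on compact sets to $\Phi_\infty$" has a domain mismatch ($\G$ versus $\G_p$) and is only correct after the reparametrisation just described.

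There are two ways to repair this. One is to stay downstairs, as the paper's terse proof actually suggests: $\gamma(0)$ and $\gamma(1)$ are regular points of $T_xM^*$, so $\gamma_n(0)$ and $\gamma_n(1)$ are regular in $nM^*$ for large $n$; on the regular part, $nM^*$ is a genuine Riemannian manifold and the approximations $\exp^{nM^*}_x=\pi\circ\exp^{nM}_p$ restrict to smooth maps converging in $C^\infty$ to the identity near $\hat\gamma(0)$. Dimensions match (both are $\dim T_xM^*$), and the covering property for $\exp^{T_xM^*}_{\gamma(0)}$ then passes to $\exp^{nM^*}_{\gamma_n(0)}$ by the inverse function theorem or a degree argument without any dimension gap. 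The other is to keep your upstairs approach but enlarge the domain of $\Phi_\infty$ to $T_p(\G\ast p)\times\G_p\times B^{\mathrm{hor}}_\epsilon(\dot{\hat\gamma}(0))$ with $(v,g,\hat w)\mapsto v+g\cdot(\hat\gamma(0)+\hat w)$, and parametrise a $1/n$-neighborhood of $\G_p$ in $\G$ for the $\Phi_n$'s so that they actually converge to this $\Phi_\infty$. With that correction, your dimension count closes up and the inverse-function argument can go through. As it stands, though, the "reduced claim" is not proved.
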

\begin{proof}
Let $v = \gamma(0) \in T_xM^*$. Since the endpoints of $\gamma$ are not conjugate along $\gamma$, there exists a neighborhood $U$ of $\dot \gamma(0)$ in $T_vT_xM^*$ such that $\exp_{v} : U \to T_xM^*$ is a diffeomorphism onto its image. 
 In particular for all $\epsilon > 0$ there exits $\delta > 0$ such that $B_\delta(\gamma(1)) \subset \exp^{T_xM^*}_{\gamma(0)}(B_\epsilon (\dot \gamma(0)))$. Then via the Gromov-Hausdorff approximations $\exp^{nM^*}_x : T_xM^* \to nM^*$ this property carries over to the approximating sequence. We leave the details to the reader.
\end{proof}
The next lemma will be of technical help at various occasions.
\begin{lemma}\label{Fix}
Let $q \in M$ and denote by $N$ the component of $\operatorname{Fix}(\G_q)$ containing $q$. Let $N(\G_q)$ denote the normalizer of $\G_q$ and $\mathsf H$ denote the subgroup of elements of $N(\G_q)$ that leave $N$ invariant. Then $\mathsf H$ is acting isometrically on $(N,g)$ and a horizontal geodesic of $(N,g)$ with respect to the action of $\mathsf H$ is also a horizontal geodesic of $(M,g)$ with respect to the action of $\G$. Moreover the map 
\begin{align*}
 h : N/\mathsf H &\to M^*\\
 pr(p) &\mapsto \pi(p)
\end{align*}
is an isometric embedding onto its image $\pi(N)$ equipped with the induced intrinsic metric, where $pr : N \to N/\mathsf H$ denotes the projection.
\end{lemma}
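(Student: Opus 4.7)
The plan is to establish the three assertions in turn: that $\mathsf H$ acts on $N$ by isometries, that $\mathsf H$-horizontal geodesics in $N$ are $\G$-horizontal in $M$, and that the induced map $h : N/\mathsf H \to M^*$ is an isometric embedding onto $\pi(N)$. The first point is immediate, since $\mathsf H \subseteq \G$ acts by isometries on $M$ and preserves $N$ by definition; moreover $N$, being a connected component of the fixed set of $\G_q$ acting by isometries, is a closed totally geodesic submanifold of $M$, so any geodesic of $(N,g)$ is automatically a geodesic of $(M,g)$.

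For the horizontal geodesic claim, the key linear identity is $T_c(\mathsf H \ast c) = T_c(\G \ast c) \cap T_cN$ for every $c \in N$. The inclusion $\subseteq$ is trivial. For $\supseteq$, I use that $T_cN = (T_cM)^{\G_q}$: if $X \in \mathfrak g$ satisfies $X_c \in T_c(\G\ast c)$ and $X_c$ is $\G_q$-fixed, then averaging $X$ over the compact group $\G_q$ yields $\bar X \in \mathfrak g^{\G_q}$ with $\bar X_c \equiv X_c \pmod{\mathfrak g_c}$. Since $\bar X$ is $\mathrm{Ad}(\G_q)$-fixed, $\exp(t\bar X)$ lies in the centralizer of $\G_q$, hence in $N(\G_q)$; the curve $t \mapsto \exp(t\bar X)\cdot q$ stays in $\Fix(\G_q)$, starts at $q \in N$, and therefore in the open component $N$ for all $t$, which forces $\exp(t\bar X)\cdot N = N$ and $\bar X \in \mathfrak h$. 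This proves $X_c \in T_c(\mathsf H \ast c)$. Horizontality then follows from the $\G_q$-invariant orthogonal decomposition $T_cM = T_cN \oplus T_cN^\perp$: any $v \in T_c(\G \ast c)$ splits as $v = v_1 + v_2$ with $v_1 \in T_c(\G \ast c) \cap T_cN = T_c(\mathsf H \ast c)$ and $v_2 \in T_cN^\perp$, and $\dot c \perp v$ since $\dot c \perp v_1$ by $\mathsf H$-horizontality in $N$ and $\dot c \perp v_2$ because $\dot c \in T_cN$.

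Finally, for the isometric embedding $h$, well-definedness and surjectivity onto $\pi(N)$ are clear. For injectivity, given $p,p' \in N$ with $\pi(p) = \pi(p')$, I would show that $\mathsf H \ast p = \G \ast p \cap N$: the tangent identity at each point of $\mathsf H \ast p$ exhibits it as open in $\G \ast p \cap N$, it is closed in $M$ as a compact orbit, and a slice-theorem analysis around any point of $\G \ast p \cap N$, combined with the fact that $N(\G_q)$ permutes the components of $\Fix(\G_q)$, rules out further components. For the metric comparison, the same $\G_q$-invariant decomposition shows that for any smooth curve $\alpha$ in $N$, the $\G$-horizontal component of $\dot\alpha$ equals its $\mathsf H$-horizontal component in $T_cN$, so the length of $\pi\circ\alpha$ in $M^*$ equals the length of $pr\circ\alpha$ in $N/\mathsf H$. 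Conversely, the horizontal lift in $M$ of any curve in $\pi(N)$ starting at a point of $N$ has velocity in $N_p(\G \ast p) \cap T_pN$ at each moment, by the same decomposition, and thus stays in $N$; this yields the reverse inequality on distances.

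The main obstacle is the injectivity step above: ruling out components of $\G \ast p \cap N$ disjoint from $\mathsf H \ast p$. Openness-and-closedness only exhibits $\mathsf H \ast p$ as a union of components, and excluding further ones genuinely requires the local slice description of $N$ together with the transitive action of $N(\G_q)$ on the relevant components of $\Fix(\G_q)$. Once injectivity is in place, the remaining bookkeeping with horizontal lifts and lengths falls out of the orthogonal splitting $T_cM = T_cN \oplus T_cN^\perp$ already used above.
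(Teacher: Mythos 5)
Your treatment of the first two assertions is correct and somewhat more streamlined than the paper's. For the horizontality, you establish the identity $T_c(\mathsf H \ast c) = T_c(\G \ast c) \cap T_cN$ at \emph{every} $c \in N$ by averaging $X \in \mathfrak g$ over $\G_q$; the paper instead works at points $p$ with $\G_p = \G_q$ (which are open and dense in $N$), uses the decomposition $T_pM = F^N \oplus P^N \oplus F^T \oplus P^T$ into $\G_q$-fixed and non-fixed parts of normal and tangent spaces to the orbit, and lets density plus continuity of horizontality do the rest. Both routes are valid; yours is a pointwise refinement.

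The genuine issue is in the third part, and you have put your finger on it yourself. You base injectivity on the orbit identity $\mathsf H \ast p = \G \ast p \cap N$ for \emph{all} $p \in N$, and you note that your openness-and-closedness argument only exhibits $\mathsf H \ast p$ as a union of components of $\G \ast p \cap N$, with the exclusion of extra components deferred to an unwritten slice-theorem analysis. This is a real gap: when $\G_p \supsetneq \G_q$, the clean argument "$g\G_q g^{-1} = g\G_p g^{-1} = \G_{gp} \supseteq \G_q$, hence equality by compactness, hence $g \in N(\G_q)$" is no longer available, and one would need to show that $\G_q$ and any conjugate $g^{-1}\G_q g$ lying in $\G_p$ with $gp \in N$ are actually conjugate inside $\G_p N(\G_q)$ — exactly the part you flag. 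Your metric-comparison paragraph does not bypass this either: lifting a curve in $\pi(N)$ horizontally into $N$ and projecting to $N/\mathsf H$ lands at a point $\tilde y$ with $\pi(\tilde y) = \pi(p')$, but knowing $\tilde y \in \mathsf H p'$ (so that the endpoint in $N/\mathsf H$ is the right one) is again the same orbit identity.

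The paper sidesteps this entirely. It proves $(\G\ast p)\cap \Fix(\G_q) = N(\G_q)\ast p$, hence $(\G\ast p)\cap N = \mathsf H\ast p$, only for $p$ of the generic type $\G_q$, where the compactness argument above works. This gives a bijection $N_0/\mathsf H \to \pi(N_0)$ on the dense open principal part $N_0$, which by the already-proven horizontality statement is an isometry for the induced intrinsic metrics, and the global isometry $N/\mathsf H \to \pi(N)$ is then obtained by continuity (with injectivity coming for free once distance-preservation is known). You would do well either to restrict your orbit identity to the generic stratum and invoke density as the paper does, or to actually carry out the slice-theorem argument you allude to — merely observing that $N(\G_q)$ permutes the components of $\Fix(\G_q)$ does not by itself rule out extra components of $\G\ast p\cap N$.
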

\begin{proof}
Let $U$ denote the subgroup of elements of $\G$ that leave $\operatorname{Fix} \G_q$ invariant. Then $U = N(\G_q)$: To see this first let $u \in U$. Then $u\G_qu^{-1} = \G_{uq} \supseteq \G_q$, since $uq \in \operatorname{Fix}(\G_q)$ for $u \in U$. Since $\G_q$ is compact it follows that $\G_q = u\G_qu^{-1}$. So $U \subseteq N(\G_q)$. On the other hand let $n \in N(\G_q)$ and $p \in \Fix (\G_q)$. Then $\G_{np} = n\G_pn^{-1} \supseteq n\G_qn^{-1} = \G_q$, so $np \in \Fix (\G_q)$ and $N(\G_q) \subseteq U$. 

Observe that by the slice theorem the set of points of $N$ of the same type as $q$ form an open and dense subset of $N$. Also for $p \in \Fix (\G_q)$ we have $\G_p \supseteq \G_q$ and therefore if $p$ and $q$ have the same type we have $\G_p = \G_q$. Next we show that 
\begin{align}\label{groups}
(\G \ast p) \cap \operatorname{Fix}(\G_q) = N(\G_q) \ast p
\end{align}
for all $p \in \operatorname{Fix} \G_q$ of the same type as $q$: Since $U = N(\G_q)$ it follows that $N(\G_q) \ast p \subseteq \G \ast p \cap \Fix(\G_q)$. On the other hand let $g \in \G$ such that $gp \in \Fix (\G_q)$. Then $g\G_qg^{-1} = g\G_pg^{-1} = \G_{gp} \supseteq \G_q$. Again it follows that $g\G_qg^{-1} = \G_q$, so $g \in N(\G_q)$ and \eqref{groups} is proven. From \eqref{groups} it moreover follows that
\begin{align}\label{groups2}
 (\G \ast p) \cap N = \mathsf H \ast p
\end{align}
for all $p \in N$ of the same type as $q$. Now let $\gamma : [0,1] \to N$ be a horizontal geodesic with respect to the action of $\mathsf H$. We show that $\gamma : [0,1] \to M$ is a horizontal geodesic with respect to the $\G$-action as well. Let $p = \gamma(0)$. Since $N$ is totally geodesic, $\gamma$ is a geodesic of $M$ and therefore it suffices to show that $\dot \gamma(0) \in N_p(\G \ast p)$. Since the set of points with istropy group $\G_q$ is open and dense in $N$ we may assume that $\G_p = \G_q$. 
Consider the decomposition
$$T_pM = N_p(\G \ast p) \oplus T_p(\G \ast p) = F^N \oplus P^N \oplus F^T \oplus P^T,$$
where $F^N$ and $F^T$ denote the respective fixed point sets of the $\G_q$-actions on $N_p(\G \ast p)$ and $T_p(\G \ast p)$ and $P^N$ and $P^T$ denote their respective orthogonal complements. Then $T_pN = F^N \oplus F^T$. From \eqref{groups2} it follows that $T_p(\mathsf H \ast p) = T_p(\G \ast p) \cap T_pN = F^T$.  Therefore $F^N$ equals the normal space to $\mathsf H \ast p$ at $p$ in $N$ and $\dot \gamma(0) \in F^N \subset N_p(\G \ast p)$.

Finally we consider the map
\begin{align*}
h : N/\mathsf H &\to \pi(N).\\
pr(p) &\mapsto \pi(p)
\end{align*}
Since $\mathsf H$ is a subgroup of $\G$, $h$ is well defined. Denote by $N_0$ the set of points of $N$ of type $\G_q$. It follows from \eqref{groups2} that $h : N_0 /\mathsf H \to \pi(N_0)$ is a homeomorphism. Moreover, since a horizontal geodesic of $N$ with respect to the action of $\mathsf H$ is also a horizontal geodesic of $M$ with respect to the action of $\G$, it follows that $h : N_0/\mathsf H \to \pi(N_0)$ is an isometry, when $\pi(N_0)$ is equipped with the induced intrinsic metric. Hence $h : N/\mathsf H \to \pi(N)$ is an isometry as well by continuity.
\end{proof}
\begin{prop}\label{linear is open}
 Let $C \subset M^*$ be locally closed and locally horizontally convex. Then the set of linear points is open in $C$.
\end{prop}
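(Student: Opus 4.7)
Fix a linear point $x\in C$ and a lift $p\in\pi^{-1}(x)$; set $L:=d\pi_p^{-1}(T_xC)\subseteq N_p(\G\ast p)$, which by hypothesis is a $\G_p$-invariant linear subspace. By the slice theorem, choose a $\G_p$-invariant ball $B$ about $0_p$ in $N_p(\G\ast p)$ on which $\exp_p$ restricts to an equivariant diffeomorphism onto a slice at $p$, with radius $\rho$ smaller than the local horizontal convexity constant at $x$ and so that $C\cap B_\rho(x)$ is closed in $B_\rho(x)$. The plan is to establish the identity
\[
\pi^{-1}(C)\cap\bigl(\G\cdot\exp_p(B)\bigr) \;=\; \G\cdot\exp_p(L\cap B).
\]
Granted this, the right-hand side is a smooth $\G$-invariant submanifold of $M$ without boundary, so at any $q$ in it with $y:=\pi(q)$, the linear subspace $T_q(\G\cdot\exp_p(L\cap B))\cap N_q(\G\ast q)$ of $N_q(\G\ast q)$ pushes forward under $d\pi_q$ to $T_yC$, proving $y$ linear.

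The inclusion $\subseteq$ uses that local horizontal convexity implies local convexity, because minimal geodesics in $M^*$ are horizontal. Any $y\in B_\rho(x)\cap C$ is joined to $x$ by a horizontal minimal geodesic $\gamma$ lying in $C$; its horizontal lift $\hat\gamma$ starting at $p$ has initial velocity $\dot{\hat\gamma}(0)\in d\pi_p^{-1}(T_xC)=L$, so with $\hat v:=d(x,y)\dot{\hat\gamma}(0)\in L\cap B$ one has $\exp_p(\hat v)\in\pi^{-1}(y)$, whence every lift of $y$ lies in $\G\cdot\exp_p(L\cap B)$.

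The reverse inclusion is the technical core. Introduce
\[
R:=\{\hat v\in L\cap B : \pi(\exp_p(s\hat v))\in C\ \forall\, s\in[0,1]\},
\]
which is closed in $L\cap B$ by local closedness of $C$ and contains $0$; since $L\cap B$ is connected, the identity $R=L\cap B$ reduces to openness of $R$. For $\hat v_0\in R$ and $\hat v\in L$ near $\hat v_0$, consider the first exit time $T(\hat v)\in[0,1]$ of $s\mapsto\pi(\exp_p(s\hat v))$ from $C$. If $T(\hat v)<1$ then at $y_T:=\pi(\exp_p(T(\hat v)\hat v))$ the incoming horizontal direction lies in $T_{y_T}C$ while the outgoing one does not, which by Corollary \ref{minus} forces $y_T\in E$, the closure of the set of nonlinear boundary points of $C$. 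To rule out such exits for $\hat v$ close to $\hat v_0$, I combine local horizontal convexity applied along a finite cover of the compact reference segment $\{\pi(\exp_p(s\hat v_0)) : s\in[0,1]\}$ with a Lemma \ref{Fix} reduction at each $q_s:=\exp_p(s\hat v_0)$: passing to the component $N_s$ of $\operatorname{Fix}(\G_{q_s})$ through $q_s$, horizontal geodesics of $\pi(N_s)$ become honest geodesics of $M^*$ and the $\G_{q_s}$-invariant part of an $L$-perturbation becomes tractable, while the residual part is absorbed by $\G_p$-equivariance. The conjugate-point control of Lemmas \ref{conjugate} and \ref{conjugate sequence} provides the endpoint continuity needed to apply local horizontal convexity uniformly across the full segment.

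The hard part is precisely this openness step: controlling that small $L$-perturbations of a reference vector in $R$ cannot develop a first exit into $E$. All other pieces---the slice theorem setup, the easy inclusion, and reading off linearity of nearby $y$ from the submanifold structure of $\G\cdot\exp_p(L\cap B)$---are routine consequences of the material already assembled.
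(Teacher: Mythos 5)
Your overall strategy is sound and closely parallels the paper's: both reduce the proposition to showing that near a linear point $x$, the preimage $\pi^{-1}(C)$ coincides with the smooth invariant submanifold $\G\cdot\exp_p(L\cap B)$ (the paper writes this as ``$x$ is an interior point of $C$ considered as a subset of $U^*$''), after which Corollary~\ref{submanifold} gives linearity of nearby points. Your ``$\subseteq$'' inclusion and the identification of a first-exit point with $E$ via Corollary~\ref{minus} and Lemma~\ref{exponentiating interior points} are both correct.

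The gap is exactly where you flag it: the openness of $R$, i.e.\ ruling out a first exit into $E$ for small perturbations of $\hat v_0$, is not established by your sketch, and the sketch as written cannot work. Applying local horizontal convexity along a finite cover of the reference segment presupposes that the perturbed geodesic's values at the subdivision times already lie in $C$ --- but that is precisely what you are trying to prove, so the argument is circular. The Lemma~\ref{Fix} reduction and the conjugate-point lemmas are not deployed in a way that breaks this circularity: in the paper these tools are used in a Gromov--Hausdorff rescaling argument, where one passes to the tangent cone $T_xM^*$, chooses a horizontal geodesic with \emph{regular, non-conjugate} endpoints, and realizes it via Lemma~\ref{conjugate sequence} as a limit of horizontal geodesics in $nM^*$ whose endpoints can be forced into $nC$; only then does horizontal convexity close the loop. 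Moreover, the paper's proof is an induction on $\dim M^*$ (to handle the case when $C$ avoids principal isotropy, and more importantly to analyze the boundary faces of $U^*$ and then pass to the double $\hat U^*$ via Lemma~\ref{boundary}); your proposal has no inductive structure and no mechanism to control what happens when the exit point $y_T$ lies in a lower-dimensional stratum or on $\partial U^*$. Without something playing the role of the tangent-cone/rescaling step and the boundary-face induction, the openness of $R$ remains unproved, so the proof as proposed is incomplete.
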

Before proving this we give an example illustrating that the corresponding statement does not hold for convex subsets of $M^*$. 

\begin{example}\label{disc}
Let $\mathbb Z_2$ act on $\mathbb R^2$ by reflection along the $x$-axis. Then $\mathbb R^2 /\mathbb Z_2$ is isometric to a halfplane $H$. Let $C \subset H$ be a closed disc tangent to the boundary of $H$. Then the set of linear points of $C$ consists of the interior points of $C$ together with the boundary point of $C$ tangent to the boundary of $H$. 
\end{example}
We suggest to keep this example in mind while going through the proof.
\\ 

\noindent \textit{Proof of proposition \ref{linear is open}.} We proof the claim by induction on $n = \dim M^* \geq 1$. If  $n = 1$, then $C$ is either a single point, and the claim is trivial, or $\dim C = \dim M^* = 1$ and the claim follows easily. We leave the details to the reader.

Now let $\dim M^* \geq 2$ and the claim be proven for smaller dimensions. We may assume that $\dim C \geq 1$, again since the statement is trivial otherwise. 

First we discuss the case that $C$ does not contain a point with principal isotropy: Let $x = \pi(p) \in C$ be a regular point in $C$ of maximal type $\G_p$. Let $N$ denote the fixed point component of the $\G_p$-action on $M$ containing $p$. Then $C \subseteq \pi(N)$. From lemma \ref{Fix} it follows that $\pi(N)$, equipped with the induced intrinsic metric, is isometric to $N/\mathsf H$, where $\mathsf H$ denotes the subgroup of $N(\G_p)$ that leaves $N$ invariant. Moreover $C$ considered as a subset of $N/\mathsf H$ is horizontally convex, again by lemma \ref{Fix}. Since $p$ has nonprincipal type, $\dim N/\mathsf H = \dim \pi(N) < \dim M^*$. Hence the claim follows from the induction hypothesis. 

Therefore we may assume that regular points of $C$ are also regular points of $M^*$. Set $V := d\pi_p^{-1}(T_xC)$. Since $V$ is a linear space invariant under $\G_p$, there exists $\epsilon > 0$ such that 
$$U := \G \ast (\exp_p(B_\epsilon(0_p) \cap V))$$
is a smooth submanifold of $M$ without boundary. Clearly $\G$ acts by isometries on $(U,g)$. Denote by $U^*$ the quotient space $(U,g)/\G$. Considering $U^*$ as a subset of $M^*$ the quotient metric on $U^*$ coincides with the intrinsic metric induced from $M^*$. From the construction it is clear that $T_xU^* = T_xC = V/\G_p$. It then follows from local convexity of $C$ that $(B_\epsilon(x) \cap C) \subseteq U^*$. Observe that $B_\epsilon(x) \cap C$ is again locally horizontally convex. Therefore we may for simplicity assume that $C \subseteq U^*$. Since distances of points in $U^*$ are bigger when measured in $U^*$ than in $M^*$ and $C$ is locally convex in $M^*$, it moreover follows that $C \subseteq U^*$ is locally convex as well. %Observe that we have the following convergence in the pointed Gromov-Hausdorff sense:
%$$(nM^*,nU^*,nC,x) \xrightarrow{n \to \infty} (T_xM^*,T_xU^*,T_xC,0_x),$$
%where Gromov-Hausdorff approximations are obtained via the corresponding expontial maps $\exp^n_x : T_xM^* \to nM^*$.
To prove the claim it is now clearly enough to show that $x$ is an interior point of $C$ considered as a subset of $U^*$: 

Case 1: Let $x \notin \partial U^*$: Then $\partial T_xU^* = \emptyset$. Since $T_xC = T_xU^*$ and $C$ is locally convex in $U^*$ it follows from lemma \ref{boundary} that $x \notin \partial^\tau C$, the topological boundary of $C$ in $U^*$. So $x$ is an interior point of $C$ in $U^*$ and we are done.

Case 2. Let $x \in \partial U^*$: Let $F$ be a boundary face of $U^*$ containing $x$, i.e. a type component of $U^*$ of codimension $1$ and whose closure contains $x$. In a first step we show that a neighborhood of $x$ in $F$ is contained in $C \cap F$. For that we aim at the induction hypothesis. Since $\pi^{-1}(F)$ in general is not a submanifold of $M$ the arguments for this are a bit complicated.

By the slice theorem there exists $q \in U$ with $\pi(q) \in F$, $\G_q \subseteq \G_p$ and the set of points of $F$ of type $\G_q$ is open and dense in $F$. Denote by $N \subset M$ the component of $\operatorname{Fix}(\G_q)$ containing $q$. Then $p \in N$. Let $\mathsf H$ denote the subgroup of $N(\G_q)$ leaving $N$ invariant. By lemma \ref{Fix} $\mathsf H$ is acting isometrically on $N$. Let $(N,g)/\mathsf H =: N^*$. Then the map
\begin{align*}
h :~ N^* &\to M^*\\
pr(u) &\mapsto \pi(u)
\end{align*}
is an isometric embedding onto its image $\pi(N)$, again by lemma \ref{Fix}. Observe that $h(N^*) \cap U^* = F$ and hence $h(N^*) \cap C = C \cap F$. Therefore $h^{-1}(C \cap F)$ is locally horizontally convex in $N^*$, since $h$ maps horizontal geodesics of $N^*$ to horizontal geodesics of $M^*$, once more by lemma \ref{Fix}. 

Now let $z \in N^*$ with $h(z) = x$. We show that $$T_z(h^{-1}(C \cap F)) = T_zh^{-1}(F):$$ Since $h$ is isometric, it is enough to show that $T_x(C \cap F) = T_xF$. Clearly $T_x(C \cap F) \subset T_xF$. So let $v \in T_xF$ and we have to show that $v \in T_x(C \cap F)$. Without loss of generality we may assume that $v$ is an interior point of $T_xF$ considered as a subset of $\partial T_xU^*$. Since $C \subset U^*$ and $C$ contains a regular point of $M^*$, a regular point of $U^*$ is also a regular point of $M^*$. Since $T_xF \subset T_xU^*$ and $T_pU$ is linear there exists a horizontal geodesic $\gamma : [0,1] \to T_xU^*$  with $\gamma(1/2) = v$ such that $\gamma(0)$ and $\gamma(1)$ are regular points of $T_xM^*$. Using lemma \ref{conjugate} we may further assume that $\gamma(0)$ and $\gamma(1)$ are not conjugate along $\gamma$ in $T_xM^*$. 
Now consider the pointed Gromov-Hausdorff limit
$$(T_xM^*,T_xU^*,0_x) = \lim_{n \to \infty}(nM^*,nU^*,x).$$ Using lemma \ref{conjugate sequence} we can realize $\gamma$ as the limit of a sequence of horizontal geodesics $\gamma_n : [0,1] \to nM^*$ such that $\gamma_n(0) \in nU^*$ and $\gamma_n(1) \in nU^*$ for all $n \in \NN$. Since the points $\gamma(0)$ and $\gamma(1)$ are regular it follows that $\gamma(0), \gamma(1) \notin \partial T_xU^*$. Since $(nC,x) \to (T_xC,0_x) = (T_xU^*,0_x)$ it follows that $\gamma_n(0) \in nC$ and $\gamma_n(1) \in nC$ for all sufficiently large $n$ (otherwise there are points of $\partial (nC)$ arbitrary close to $\gamma_n(0)$ or $\gamma_n(1)$ for all sufficiently large $n$ and it follows that $\gamma(0) \in \partial T_xU^*$ or $\gamma(1) \in \partial T_xU^*$). Therefore, since $C$ is locally horizontally convex, we conclude $\gamma_n([0,1]) \subset nC \subset nU^*$ for all $n$ sufficiently large. Since $\gamma(1/2) = v \in T_xF \subset \partial T_xU^*$ there exists a sequence $t_n$ with $t_n \to 1/2$ for $n \to \infty$ and $\gamma_n(t_n) \in \partial nU^* \cap nC$ for all sufficiently large $n$. Since $v$ is an interior point of $T_xF$ in $\partial T_xU^*$ it moreover follows that $\gamma_n(t_n) \in nF \cap nC$ for all suffieciently large $n$ and we conclude $v \in T_x(C \cap F)$. 

Now observe that $h^{-1}(F) = pr(N \cap U)$ and $N \cap U$ is an $\mathsf H$-invariant smooth submanifold of $N$, since it is the fixed point set of the induced $\G_q$-action on $U$. Since $T_z(h^{-1}(C \cap F)) = T_zF$, it follows from the induction hypothesis combined with lemma \ref{submanifold} that a neighborhood of $z$ in $h^{-1}(C \cap F)$ is also a neihgborhood of $z$ in $h^{-1}(F)$. Consequently a neighborhood of $x$ in $C \cap F$ is also a neighborhood of $x$ in $F$ and we are done with the first step.

Now let $\hat U^*$ denote the double of $U^*$ obtained by gluing together two copies of $U^*$ along their common boundary. Also let $\hat C \subset \hat U^*$ denote the set of points that map to $C$ under the canonical map $\hat U^* \to U^*$. Since the first step applies to every boundary face of $U^*$ containing $x$, it follows that a small open neighborhood of $x$ in $\partial U^*$ is contained in $C$. Therefore a small open neighborhood of $x$ in $\hat C$ is locally convex in $\hat U^*$. Since $T_xC = T_xU^*$ it follows that $T_x\hat C = T_x \hat U$. Thus again it follows from lemma \ref{boundary} that $x$ is an interior point of $\hat C$ in $\hat U^*$. But then $x$ is an interior point of $C$ in $U^*$ as well and the proof is finished. \hfill$\square$

%==================================================

\subsection{Extemal subsets and boundary strata}\label{extremal}
In this short section we recall the definition of extremal subsets of an Alexandrov space and some of its properties which are important to us. If not mentioned otherwise we refer to the discussion given in the survey \cite{petrunin2007} and the refrences therein. 

\begin{definition}
A closed subset $E$ of an Alexandrov space $A$ is called extremal if the following holds: For every $q \in A\setminus E$ and every $p \in E$ such that the distance function $d_q$ to $q$ restricted to $E$ has a local minimum at $p$ the point $p$ is critical for $q$. 
\end{definition}
Note that $A$ as well as the empty set are extremal. Also the intersection of two extermal subsets is again extermal. Therefore for $x \in A$ there exists a smallest extremal subset of $A$ which contains $x$ and which is denoted $Ext(x)$. An extremal subset $E$ is called primitive if $E = Ext(x)$ for some $x \in A$. The main part of a primitive extremal set $E = Ext(x)$ is defined as the set $\{y \in E \mid Ext(y) = Ext(x)\}$. Each main part of a primitive extremal subset $E$ is a topological manifold which is open and dense in $E$ and therefore a natural notion of the dimension of a primitive extremal subset is induced. $A$ is stratified into topological manifolds by the main parts of its primitive extremal subsets and the boundary $\partial A$ of $A$ is obtained as the union of all primitive extremal subsets of codimension $1$.
\begin{definition}
A boundary stratum of an Alexandrov space $A$ is any union of primitive extremal subsets of codimension $1$.
\end{definition}
In the unpublished preprint \cite{perelman91} it was shown that the distance function to the boundary of a nonnegatively curved Alexandrov space $A$ is concave. It is well known that this holds more generally for any boundary stratum of $A$. A detailed proof can be found in \cite{worner2010}.

\begin{lemma}\label{distance boundary}
Let $A$ be a nonnegatively curved Alexandrov space with nonempty boundary $\partial A$. Then the distance function $d_B$ to any boundary stratum $B$ of $A$ is concave on $A \setminus B$.
\end{lemma}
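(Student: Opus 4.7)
The plan is to establish concavity via Perelman's doubling technique. I would proceed in three steps: reduction to a single primitive stratum, formation of the metric double, and verification of concavity through comparison geometry in the doubled space.

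\emph{Reduction.} By definition, $B = \bigcup_\alpha E_\alpha$ is a union of primitive extremal subsets of codimension one, so $d_B = \inf_\alpha d_{E_\alpha}$. A direct check shows that the pointwise infimum of a family of concave functions is concave, so it suffices to prove the statement under the assumption that $B = E$ is a single primitive extremal subset of codimension one.

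\emph{Doubling.} Apply Petrunin's gluing theorem to form the metric double $\hat A$ of $A$ along $E$. Codimension-one extremality ensures that $\hat A$ is an Alexandrov space of curvature $\geq 0$, equipped with a canonical isometric involution $\sigma : \hat A \to \hat A$ whose fixed set is $E$ and which exchanges the two copies of $A$. For every $p \in A \setminus E$ with lift $\tilde p \in \hat A$, one obtains
\[
 d_E(p) = \tfrac{1}{2}\, d_{\hat A}(\tilde p, \sigma \tilde p),
\]
because a $\sigma$-symmetric path from $\tilde p$ to $\sigma \tilde p$ through any fixed point $x \in E$ has length $2\, d_{\hat A}(\tilde p, x)$, so minimizing over $x$ yields $d_{\hat A}(\tilde p, \sigma \tilde p) = 2\, d_{\hat A}(\tilde p, E)$ after a short argument passing from arbitrary minimizers to $\sigma$-symmetric ones.

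\emph{Concavity in the double.} Given a geodesic $\gamma : [0, L] \to A \setminus E$, lift to $\hat \gamma : [0, L] \to \hat A$ and set $f(t) := d_{\hat A}(\hat \gamma(t), \sigma \hat \gamma(t))$, so that $d_E \circ \gamma = f/2$. To establish concavity of $f$, I would apply the four-point comparison inequality for curvature $\geq 0$ to the configuration $\hat \gamma(t_1), \hat \gamma(t_2), \sigma \hat \gamma(t_2), \sigma \hat \gamma(t_1)$. The $\sigma$-equivariance pairs up the sides of this ``quadrilateral'', and the midpoints of opposite sides are precisely $\hat \gamma((t_1+t_2)/2)$ and $\sigma \hat \gamma((t_1+t_2)/2)$; the comparison inequality then delivers
\[
 f\!\left(\tfrac{t_1+t_2}{2}\right) \geq \tfrac{1}{2}\bigl(f(t_1) + f(t_2)\bigr),
\]
which is midpoint concavity and hence, by continuity, concavity of $f$.

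The main obstacle is the final step: distances between two moving points in nonnegative curvature are in general not concave, so the $\sigma$-symmetric configuration is essential for turning the upper bounds that nonnegative curvature naturally provides into the desired lower bound. A careful execution must also verify that the doubling theorem applies to a single codimension-one primitive extremal subset (rather than only to the entire boundary) and confirm that midpoints of $\sigma$-connecting minimizers genuinely lie in $E$ and not in some deeper fixed stratum; both of these technical issues are worked out in Wörner's thesis, whose presentation I would closely follow.
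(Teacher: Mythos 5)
The paper does not prove this lemma: it cites Perelman's unpublished preprint for the concavity of $d_{\partial A}$ and refers to W\"orner's work for general boundary strata. So there is no internal proof to compare against, and the proposal must be judged on its own. Your overall strategy (reduce to one primitive extremal subset, double, exploit the $\sigma$-symmetry) is in the spirit of Perelman's argument for $d_{\partial A}$, and the reduction step is fine: the infimum of concave functions is concave, and $A\setminus B\subseteq A\setminus E_\alpha$ so concavity of each $d_{E_\alpha}$ on $A\setminus E_\alpha$ passes to the infimum on $A\setminus B$. However, I see two genuine gaps in the remaining two steps.

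First, the doubling step. Petrunin's gluing theorem concerns gluing Alexandrov spaces along their \emph{entire} boundaries; it does not cover doubling along a proper codimension-one primitive extremal subset $E\subsetneq\partial A$. That the double $\hat A$ along such an $E$ is again an Alexandrov space of curvature $\geq 0$ is plausible --- extremality of $E$ forces the ``dihedral'' angles of $A$ along $\partial E$ to be $\leq\pi/2$ (otherwise one easily exhibits a local minimum of $d_q|_E$ at a point that is regular for $q$), so the doubled angles stay $\leq\pi$ --- but this is a claim that needs proof, not a citation, and you would additionally have to show that the natural copy of $A$ is convex in $\hat A$ in order to conclude that $d_{\hat A}(\tilde p,E)=d_A(p,E)$. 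You flag this as a technical point to defer to W\"orner, but it is really the heart of the matter; if it fails, the rest of the proof has nothing to stand on.

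Second, the ``four-point comparison delivers midpoint concavity'' step does not work as stated. For a Euclidean quadrilateral $abcd$ the distance between the midpoints of $ab$ and $cd$ satisfies $\bigl|\tfrac{a+b}{2}-\tfrac{c+d}{2}\bigr|\leq\tfrac12\bigl(|a-d|+|b-c|\bigr)$, i.e.\ the inequality runs the \emph{wrong} way, so there is no straightforward ``reverse'' four-point statement in curvature $\geq 0$ that hands you $f\!\left(\tfrac{t_1+t_2}{2}\right)\geq\tfrac12(f(t_1)+f(t_2))$. Applying the standard quadruple (parallelogram) comparison $d(m,c)^2\geq\tfrac12 d(a,c)^2+\tfrac12 d(b,c)^2-\tfrac14 d(a,b)^2$ iteratively to the $\sigma$-symmetric configuration produces estimates that are too lossy to yield midpoint concavity (the triangle inequalities used to relate $f(t_i)$ to the diagonals introduce an additive $O(|t_2-t_1|)$ error). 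The actual argument --- already for the full boundary --- requires more: one typically uses the convex embedding $A\hookrightarrow\hat A$ and a first-variation/angle argument at the point where the minimizing geodesic $[\tilde p,\sigma\tilde p]$ meets $E$. So as written, the final step is not a derivation but a gesture toward one, and I would not accept it without the missing computation.
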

%=======================subsection=============
\subsection{The distance function to a boundary stratum and its level sets.}\label{subsection distance} For this section we assume additionally that $M^*$ is compact and has nonnegative curvature. Also we fix a closed and locally horizontally convex subset $\Omega^* \subseteq M^*$ with nonempty boundary and set $\Omega = \pi^{-1}(\Omega^*)$. We fix a boundary stratum $B$ of $\Omega$ and denote by $f = d_B : \Omega \to \RR$ the distance function to $B$. Moreover we assume that $\Omega \setminus \pi^{-1}(B)$ is a smooth submanifold of $M$ without boundary. Since $M^*$ has nonnegative curvature the same holds for $\Omega^*$. Therefore we have the following lemma which is the basis for the arguments in this section.
\begin{lemma}
 $f : \Omega^* \setminus B \to \RR$ is concave.
\end{lemma}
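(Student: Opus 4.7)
The plan is to reduce the claim directly to Lemma~\ref{distance boundary}. The main task is to verify that $\Omega^*$, equipped with its induced intrinsic metric, is a compact nonnegatively curved Alexandrov space in which $B$ appears as a boundary stratum; the concavity of $f = d_B$ on $\Omega^* \setminus B$ then follows at once.

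First I would upgrade local horizontal convexity to local convexity. Since every geodesic of $M^*$ is in particular a horizontal geodesic, any short minimal geodesic of $M^*$ between two nearby points of $\Omega^*$ is forced by local horizontal convexity to remain in $\Omega^*$. More concretely, if $\epsilon > 0$ witnesses local horizontal convexity at $x \in \Omega^*$, then for $y,z \in B_{\epsilon/3}(x) \cap \Omega^*$ one has $d(y,z) < 2\epsilon/3 < \epsilon$, and any minimal geodesic of $M^*$ from $y$ to $z$, being horizontal and short, lies in $\Omega^*$. Hence $\Omega^*$ is locally convex in $M^*$.

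Now I invoke the basic observation recalled at the beginning of Section~\ref{section preleminaries}: a closed, locally convex subset of an Alexandrov space of curvature $\geq k$, with its induced intrinsic metric, is itself an Alexandrov space of curvature $\geq k$, and its geodesics are geodesics of the ambient space. Applied to $\Omega^* \subseteq M^*$, this makes $\Omega^*$ a compact nonnegatively curved Alexandrov space with nonempty boundary. Lemma~\ref{distance boundary} then gives directly that for any boundary stratum $B$ of $\Omega^*$ the distance function $d_B$ is concave on $\Omega^* \setminus B$.

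The one piece of bookkeeping worth noting is the identification of $f = d_B$ as introduced in the section with the intrinsic distance function to $B$ inside $\Omega^*$ to which Lemma~\ref{distance boundary} applies. Local convexity of $\Omega^*$ in $M^*$ forces the two metrics to coincide locally, and since geodesics of $\Omega^*$ are geodesics of $M^*$, concavity along such geodesics can be computed indifferently in either metric. I expect this small compatibility check to be the only mildly technical step; once it is settled, the lemma is a direct consequence of Lemma~\ref{distance boundary} combined with closedness and local convexity of $\Omega^*$.
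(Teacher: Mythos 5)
Your proof is correct and follows the same route the paper implicitly takes: the paper states this lemma without a displayed proof, having just observed that $\Omega^*$, as a closed and locally (horizontally) convex subset of the compact nonnegatively curved $M^*$, is itself a compact nonnegatively curved Alexandrov space, so Lemma~\ref{distance boundary} applies directly. You spell out the two small supporting facts the paper leaves implicit (that local horizontal convexity yields local convexity because minimal geodesics of $M^*$ are horizontal, and that the intrinsic metric on $\Omega^*$ agrees locally with the ambient one), which is exactly the right bookkeeping.
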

For $l \in \RR$ we set
$$C^l := \{x \in \Omega^* \mid f(x) \geq l\}.$$
Since $f$ is concave, all these sets are convex in $\Omega^*$. An important observation is that they are in fact horizontally convex.
\begin{lemma}\label{horizontally convex}
$C^l$ is horizontally convex in $\Omega^* \setminus B$ for every $l \in \RR$.
\end{lemma}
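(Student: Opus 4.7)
The plan is to reduce the statement to the concavity of $F(t):=f(\gamma(t))$ on $[0,l_0]$, where $\gamma:[0,l_0]\to\Omega^*\setminus B$ is any horizontal geodesic with $\gamma(0),\gamma(l_0)\in C^l$. Given this, concavity of $F$ together with $F(0),F(l_0)\ge l$ forces
\[
F(t)\ \ge\ \left(1-\tfrac{t}{l_0}\right)F(0)+\tfrac{t}{l_0}F(l_0)\ \ge\ l
\]
for all $t\in[0,l_0]$, so $\gamma([0,l_0])\subseteq C^l$, which is exactly horizontal convexity of $C^l$ in $\Omega^*\setminus B$.

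For the concavity of $F$, I would lift $\gamma$ to a horizontal geodesic $\hat\gamma:[0,l_0]\to M$, which is an honest geodesic of $M$. By the submetry property of $\pi$ one has $F(t)=d(\hat\gamma(t),\pi^{-1}(B))$. Since $F$ is continuous and concavity is a local condition for continuous functions on an interval, it suffices to check local concavity at every $t_0\in[0,l_0]$. At times $t_0$ for which $\gamma(t_0)$ is a regular point of $M^*$, $\gamma$ agrees in a neighborhood with an ordinary smooth geodesic of the Riemannian regular part of $M^*$, and the Alexandrov concavity of $f$ supplied by Lemma \ref{distance boundary} immediately gives $F''\le 0$ in a neighborhood of $t_0$.

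The key difficulty is at the non-regular times of $\gamma$ in $M^*$, where $\gamma$ need not be locally minimizing in $M^*$ (horizontal geodesics can ``bend'' when crossing a non-principal stratum). To handle such $t_0$ I would choose regular parameters $a<t_0<b$ near $t_0$ and a minimizing geodesic $\delta$ of $M^*$ from $\gamma(a)$ to $\gamma(b)$; such a $\delta$ exists inside the open set $\Omega^*\setminus B$ for $b-a$ small since both endpoints lie in that open set. Concavity of $f$ along $\delta$ yields the chord estimate $f(\delta(s_0))\ge(1-s_0)F(a)+s_0 F(b)$ with $s_0=(t_0-a)/(b-a)$, and the desired inequality $F(t_0)\ge(1-s_0)F(a)+s_0 F(b)$ follows from comparing $\gamma|_{[a,b]}$ with $\delta$ using the length bound $L(\delta)\le b-a=L(\gamma|_{[a,b]})$ together with the $1$-Lipschitz property of $f=d_B$. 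Pointwise local concavity at every $t_0$, combined with continuity of $F$, then upgrades to global concavity of $F$ on $[0,l_0]$ and finishes the proof.
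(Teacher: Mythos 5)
Your reduction to the concavity of $F=f\circ\gamma$ along a horizontal geodesic, and your treatment of the regular times $t_0$, agree with the paper's approach: on intervals where the orbit type along $\gamma$ is constant, $\gamma$ is a genuine local geodesic of $M^*$ and Lemma~\ref{distance boundary} gives local concavity of $F$ there. The paper sets this up equivalently with $\hat f=d_{\pi^{-1}(B)}$ on $M$ composed with the lifted geodesic.

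The non-regular times are where the actual content of the lemma lies, and that is where your argument has a gap. Concavity of $f$ along a minimizing geodesic $\delta$ of $M^*$ from $\gamma(a)$ to $\gamma(b)$ gives $f(\delta(s_0))\ge(1-s_0)F(a)+s_0F(b)$, but transferring this to $F(t_0)$ via the $1$-Lipschitz property of $f$ requires $d(\gamma(t_0),\delta(s_0))=0$, or at least $o(b-a)$. A horizontal geodesic can bend at a non-principal orbit: in the blow-up at $\gamma(t_0)$ it looks like a broken geodesic reflecting at the singular cone point, while $\delta$ goes around it, so $d(\gamma(t_0),\delta(s_0))$ is of order $b-a$. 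The Lipschitz estimate then only yields $F(t_0)\ge(1-s_0)F(a)+s_0F(b)-O(b-a)$, with an error of the same order as the inequality you are trying to establish, so nothing is gained by shrinking $b-a$. The length bound $L(\delta)\le b-a$ controls no transverse displacement between the two curves either. The paper instead handles the break points $t_k$ by computing one-sided derivatives of $\hat f\circ\hat\gamma$ in $T_{\hat\gamma(t_k)}M$, which is Euclidean: writing $\Gamma_k$ for the set of initial directions of minimizing geodesics from $\hat\gamma(t_k)$ to $\pi^{-1}(B)$, the identity $\measuredangle(-v,w)=\pi-\measuredangle(v,w)$ for single vectors gives, after passing to infima over $w\in\Gamma_k$, the inequality $\measuredangle(-\dot{\hat\gamma}(t_k),\Gamma_k)\le\pi-\measuredangle(\dot{\hat\gamma}(t_k),\Gamma_k)$, which is precisely the concavity-compatible jump of the one-sided derivatives of $F$ at $t_k$. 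That Euclidean first-variation computation in $M$ is the ingredient your purely-$M^*$ comparison argument is missing.
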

\begin{proof}
Let $\gamma : [0,1] \to \Omega \setminus \pi^{-1}(B)$ be a horizontal geodesic. Let $0 \leq t_1 < \dots < t_k \leq 1$ such that $\gamma_{\vert [0,1] \setminus \{t_1, \dots t_k\}}$ has constant type. Denote by $\hat f : \Omega \to \RR$ the distance function to $\pi^{-1}(B)$ and let
$$(\hat f \circ \gamma)^-(t) = \lim_{s \searrow 0} \frac{\hat f \circ \gamma(t) - \hat f \circ \gamma(t - s)} t$$
and
$$(\hat f \circ \gamma)^+(t) = \lim_{s \searrow 0} \frac{\hat f \circ \gamma(t) - \hat f \circ \gamma(t + s)} t.$$
It is enough to show that $(\hat f \circ \gamma)^+(t_k) \leq -(\hat f \circ \gamma)^-(t_k)$ for all $k \in \{1, \dots ,n\}$ with $0 < t_k <1$, since $\hat f \circ \gamma$ is concave on every connected interval contained in $[0,1] \setminus \{t_1, \dots t_k\}$. Let $\Gamma_k$ denote the set of initial directions of minimal geodesics from $\gamma(t_k)$ to $\pi^{-1}(B)$. Then 
\begin{align*}
 -(\hat f \circ \gamma)^-(t) = \cos(\measuredangle(-\dot \gamma(t_k),\Gamma_k)) &\geq \cos(\pi - \measuredangle (\dot \gamma(t_k),\Gamma_k))\\
 &= -\cos(\measuredangle(\dot \gamma(t_k),\Gamma_k) = (\hat f \circ \gamma)^+(t).
\end{align*}
\end{proof}
In the following let $a$ denote the maximal value of $f$ and we will only consider the set $C^a \subset \Omega^*$ of points of maximal distance to $B$. But we remark that the corresponding results hold for all the other level sets $C^l$ with different but much simpler proofs. 
The following proposition is the key technical observation.
\begin{prop}\label{tangent linear iff normal linear}
$T_xC^{a}$ is linear if and only if $N_xC^{a}$ is linear. 
\end{prop}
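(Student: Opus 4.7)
The plan is to establish a convex cone duality in $N_p(\G \ast p)$ for a fixed $p \in \pi^{-1}(x)$. Set
$$V := d\pi_p^{-1}(T_xC^a), \qquad W := d\pi_p^{-1}(N_xC^a);$$
these are $\G_p$-invariant cones in $N_p(\G \ast p)$. Let $\Gamma_p \subset N_p(\G \ast p)$ denote the $\G_p$-invariant set of initial directions of minimizing geodesics from $p$ to $\pi^{-1}(B)$; these lie in $N_p(\G \ast p)$ because $\pi^{-1}(B)$ is $\G$-invariant and the distance function $q \mapsto d(q, \pi^{-1}(B))$ is constant on $\G$-orbits, so any such minimizing geodesic leaves $p$ orthogonally to the orbit. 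The goal is to prove the two identifications
$$W = V^\circ, \qquad V = \Gamma_p^\circ := \{v \in N_p(\G \ast p) : \langle v, \xi \rangle \leq 0 \text{ for all } \xi \in \Gamma_p\},$$
and then to conclude by bipolar duality; the second identification is the technical heart because it exhibits $V$ as a closed convex cone.

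The polarity $W = V^\circ$ is direct from definitions: $u \in W$ iff $\measuredangle_{M^*}(d\pi_p(u), d\pi_p(v)) \geq \pi/2$ for every $v \in V$, and combining the quotient angle formula $\measuredangle_{M^*}(d\pi_p(u), d\pi_p(v)) = \min_{g \in \G_p} \measuredangle(u, g v)$ with the $\G_p$-invariance of $V$ reduces this to $\langle u, v \rangle \leq 0$ for every $v \in V$. For $V = \Gamma_p^\circ$, the inclusion $V \subseteq \Gamma_p^\circ$ is a first-variation argument: for $v \in V$, local convexity of $C^a$ produces a geodesic of $C^a$ emanating from $x$ with initial direction $d\pi_p(v)$, along which $f \equiv a$, so the first variation $f'(d\pi_p(v)) = 0$ for $f = d_B$ combined with the quotient angle formula and the $\G_p$-invariance of $\Gamma_p$ forces $\langle v, \xi \rangle \leq 0$ for every $\xi \in \Gamma_p$. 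The reverse inclusion $\Gamma_p^\circ \subseteq V$ will be obtained via Toponogov hinge comparison in $M$: for $v \in \Gamma_p^\circ$ and $c(t) := \exp_p(tv)$, comparison of the triangle $(p, c(t), z)$ with an initial direction $\hat\xi_z$ of a minimizing geodesic from $p$ to $z$ gives
$$d(c(t), z)^2 \geq t^2 + d(p, z)^2 - 2t\, d(p, z)\cos\measuredangle(v, \hat\xi_z) \quad \text{for all } z \in \pi^{-1}(B).$$
For $z$ with $d(p, z) = a$ the hypothesis $v \in \Gamma_p^\circ$ gives $\cos\measuredangle(v, \hat\xi_z) \leq 0$ and hence $d(c(t), z) \geq a$; for $z$ with $d(p, z)$ substantially larger than $a$ the Lipschitz bound $d(c(t), z) \geq d(p, z) - t$ suffices; the nearly-closest $z$'s are handled uniformly by compactness of $\pi^{-1}(B)$ together with the continuity of $z \mapsto \hat\xi_z$ approaching $\Gamma_p$. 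Combined with the maximality bound $\hat f(c(t)) \leq a$ this yields $\hat f(c(t)) = a$, i.e.\ $\pi(c(t)) \in C^a$ and $d\pi_p(v) \in T_xC^a$.

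Once $V = \Gamma_p^\circ$ is established, $V$ is a closed convex cone, so the bipolar theorem gives $V = V^{\circ\circ} = W^\circ$. A closed convex cone $C$ is a linear subspace iff $C = -C$, and $(-V)^\circ = -V^\circ$; therefore
$$V = -V \iff V^\circ = -V^\circ \iff W = -W,$$
which is precisely the statement that $T_xC^a$ is linear iff $N_xC^a$ is linear. The hard step will be the reverse inclusion $\Gamma_p^\circ \subseteq V$ in the identification of $V$: upgrading the first-order condition $\langle v, \xi \rangle \leq 0$ for $\xi \in \Gamma_p$ to the pointwise equality $d_B(\pi(\exp_p(tv))) = a$ for small $t$ requires balancing the Toponogov hinge estimate against the trivial distance bound uniformly over the nearly-closest points of $\pi^{-1}(B)$, which is a delicate compactness and continuity argument.
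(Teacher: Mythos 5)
Your duality framework is an appealing way to organize the statement, and the two easy reductions are sound: $W = V^\circ$ is precisely the unnumbered lemma preceding Corollary~\ref{linear normal} (convexity of $d\pi_p^{-1}(N_xC)$), and $V \subseteq \Gamma_p^\circ$ follows from the first variation formula for $\hat f$ together with $\hat f \leq a$. But the identity $V = \Gamma_p^\circ$ --- the step you correctly flag as the technical heart --- is \emph{false}, and the Toponogov hinge argument cannot repair it.

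A counterexample in exactly the setting of this subsection (with $\G$ trivial, which is allowed since we work with a fixed locally horizontally convex $\Omega^* \subset M^*$): let $M$ be a large flat $2$-torus, $\Omega^* = \Omega$ the closed solid ellipse $\{u^2/4 + w^2 \leq 1\}$, and $B = \partial\Omega^*$. Then $f = d_B$ is concave, $a = 1$, and $C^a = \{(0,0)\} =: \{x\}$, so $V = T_xC^a = \{0\}$; but $\Gamma_p = \{(0,\pm 1)\}$ and $\Gamma_p^\circ = \RR \times \{0\}$. Directly, for $v = (1,0) \in \Gamma_p^\circ$ and $c(t) = (t,0)$ one computes $\hat f(c(t)) = \sqrt{1 - t^2/3} < a$, so $v \notin V$. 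The hinge estimate fails exactly where you anticipated ``a delicate compactness and continuity argument'': the nearest boundary point to $c(t)$ is $z_t = (4t/3, \sqrt{1-4t^2/9})$, for which $d(p,z_t)^2 - a^2 = \tfrac{4}{3}t^2$ and $\cos\measuredangle(v,\hat\xi_{z_t}) \sim \tfrac{4}{3}t$, whereas the hinge inequality would need $\cos\measuredangle(v,\hat\xi_{z_t}) \leq \bigl(t^2 + d(p,z_t)^2 - a^2\bigr)/\bigl(2t\,d(p,z_t)\bigr) \sim \tfrac{7}{6}t$. Both sides are of order $t$; the constant is governed by the second fundamental form of $\pi^{-1}(B)$ near its foot points, which $\Gamma_p$ does not record. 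Compactness and the upper semicontinuity of the set of minimizing directions give $\cos\measuredangle(v,\hat\xi_z) \to 0$ as $d(p,z) \to a$, but with no rate, and no rate is available in general.

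Once $V = \Gamma_p^\circ$ fails, the bipolar step has nothing to act on: your argument reduces the proposition to the assertion that $V$ is a closed convex cone, but $V \subseteq \Gamma_p^\circ$ cannot witness this, since $\Gamma_p^\circ$ is convex by construction, and examples \ref{example 1} and \ref{example 2} show that for general convex $C \subset M^*$ the cone $d\pi_p^{-1}(T_xC)$ is genuinely nonconvex. Something beyond the first-order data encoded in $\Gamma_p$ must enter. The paper supplies it through a second-order device: the rescaled super-level sets $\overline C_t$ are shown to be horizontally convex (this uses that the $C^l$ are level sets of a \emph{concave} function, not merely that $C^a$ is convex), and an iterated splitting argument over the subspaces $W^*_k$ then forces $0$ to be an interior point of $T_xC$ in the terminal $W^*_k$. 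That machinery is doing precisely the work that the hinge estimate cannot.
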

We note that the corresponding proposition does not hold for convex subsets as illustrated by the examples \ref{example 1} and \ref{example 2}. We also note two questions which we were not able to answer in order to motivate the arguments of the proof. Consider a horizontally convex subset $C \subseteq M^*$. Then the first question is whether for $x \in C$ also $T_xC \subseteq T_xM^*$ is horizontally convex and secondly if for every $x \in C$ with $T_xC \neq T_xM^*$ the normal space $N_xC$ is nontrivial. In this case the proof follows easily: If $N_xC$ is linear then $N_xC^\perp = V/\G_p$ for a linear subspace $V \subset N_p(G \ast p)$ and $T_xC \subseteq V/\G_p$. Assume that $T_xC \subsetneq V/\G_p$. Then $T_xC$ has nontrivial normal space considered as a horizontally convex subset of $V/\G_p$ in contradiction to the fact that $N_xC \cap V/\G_p = \{0_x\}$. Our proof follows this line of thought using additionally the extra structure we obtain from the distance function $f$. It is propably correct that the equation $N_xC^\perp = T_xC$ holds for a horizontally convex subset $C$ for every $x \in C$, which would imply that the answer to both questions is generally yes.
\begin{proof}
For simplicity we assume that $\Omega^* = M^*$. The general case follows with identical arguments restricted to $\Omega^*$. Also let $C := C^{a}$ for simplicity of notation. Moreover we may assume that $\dim C \geq 1$, since otherwise the claim is trivial. We first give a proof assuming that $C$ contains a regular point of $M^*$ and afterwards we reduce the general case to this situation. 

Let $x \in C$. Clearly $N_xC$ is linear if $T_xC$ is linear. Therefore we assume that $N_xC$ is linear and have to show that $T_xC$ is linear. If $x$ itself is a regular point of $C$ and therefore also of $M^*$ it follows that $T_xC = N_xC^\perp$ and we are done. Therefore we assume that $x$ is nonregular. 

We use a sequence of convex subsets of $T_xM^*$ that converge to $T_xC$, constructed via limits of the rescaled super level sets of $f$. This will help to determine the geometry of $T_xC$.
\begin{sublem}\label{convex family}
 There exists a family $\{C_t\}_{t \in [0,1]}$ of closed convex subsets of $B_1(0) \subset T_xM^*$ satisfying the following
 \begin{itemize}
  \item[a)] $C_t \subset C_s$ for $t < s$,
  \item[b)] $d_H(C_t,C_s) \to 0$ for $s \to t$, where $d_H$ denotes Hausdorff distance,
  \item[c)] $C_1 = B_1(0)$,
  \item[d)] $C_0 = T_xC \cap B_1(0)$,
  \item[e)] For all $t \in [0,1]$ and $n \in \NN$ there exists $l_t(n) \in \RR$ such that $$(T_xM^*,C_t,0) = \lim_{n \to \infty}(nM^*,nC^{l_t(n)} \cap B^{nM^*}_1(x),x).$$
 \end{itemize}
 \end{sublem}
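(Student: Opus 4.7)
The plan is to produce each $C_t$ as a pointed Gromov-Hausdorff limit of the rescaled super level sets $nC^{l_t(n)} \cap B_1^{nM^*}(x)$, where $l_t(n) \in [a - 1/n, a]$ is chosen so that the family interpolates Lipschitz continuously in Hausdorff distance between its two extremes. First I would set up the monotone family $K^n_l := nC^l \cap B_1^{nM^*}(x)$. Since $f$ is $1$-Lipschitz with $f(x) = a$, one has $B_{1/n}^{M^*}(x) \subseteq C^{a-1/n}$, so $K^n_{a - 1/n} = B_1^{nM^*}(x)$, while $K^n_a = nC \cap B_1^{nM^*}(x)$. Define $\psi_n \colon [a - 1/n, a] \to [0, D_n]$ by $\psi_n(l) := d_H^{nM^*}(K^n_l, B_1^{nM^*}(x))$, where $D_n := \psi_n(a) \leq 1$. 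Concavity of $f$ combined with nonnegative curvature of $M^*$ yields Hausdorff continuity of $l \mapsto C^l$, hence $\psi_n$ is continuous and monotonically nondecreasing in $l$; setting $l_t(n) := \psi_n^{-1}((1 - t) D_n)$ gives $l_0(n) = a$, $l_1(n) = a - 1/n$, and the Lipschitz bound $d_H^{nM^*}(K^n_{l_s(n)}, K^n_{l_t(n)}) \leq |s - t|$ for every $s, t \in [0, 1]$.

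The second step is to extract Gromov-Hausdorff limits. By Blaschke-type compactness of closed subsets of a uniformly bounded metric space and a diagonal argument over a countable dense $T_0 \subseteq [0, 1]$, I can pass to a subsequence of $n$'s along which $K^n_{l_t(n)}$ converges in the pointed Gromov-Hausdorff sense to a closed $C_t \subseteq B_1(0) \subset T_xM^*$ for every $t \in T_0$. Convexity of each $C_t$ follows from the lemma preceding \ref{exponentiating interior points} on convergence of convex sets, applied to the rescaled convex sets $nC^{l_t(n)}$, combined with the fact that $B_1(0) \subset T_xM^*$ is itself convex: the tangent cone $T_xM^*$ is a metric cone over $\Sigma_xM^*$, whose curvature bound $\geq 1$ forces diameter $\leq \pi$ by Bonnet-Myers, making metric balls about the apex convex. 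The uniform Lipschitz estimate passes to the limit, so $\{C_t\}_{t \in T_0}$ extends uniquely to a continuous family on all of $[0, 1]$, all members still arising as Gromov-Hausdorff limits along the same subsequence.

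Properties (a)--(e) then follow: (a) from the set-theoretic inclusion $K^n_{l_s(n)} \subseteq K^n_{l_t(n)}$ for $s < t$, preserved under Hausdorff limit; (b) from the Lipschitz estimate; (c) from $l_1(n) = a - 1/n$, whence $K^n_{l_1(n)} = B_1^{nM^*}(x) \to B_1(0) = C_1$; (d) from $l_0(n) = a$, so $K^n_{l_0(n)} = nC \cap B_1^{nM^*}(x)$, whose Gromov-Hausdorff limit in $T_xM^*$ equals $T_xC \cap B_1(0)$ by the definition of the tangent cone applied to the closed set $C$; and (e) is the construction itself. The main obstacle I foresee is establishing continuity and surjectivity of $\psi_n$, underlying well-definedness of $l_t(n)$; this reduces to ruling out Hausdorff jumps in $l \mapsto C^l$ via concavity of $f$ and nonnegative curvature. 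A secondary subtlety is the degenerate case $D_n \to 0$, which forces $x$ to be interior to $C$ and so $T_xC \cap B_1(0) = B_1(0)$, making the sublemma trivially satisfied by $C_t \equiv B_1(0)$.
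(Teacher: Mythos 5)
The strategy you adopt---producing each $C_t$ as a pointed Gromov-Hausdorff limit of rescaled superlevel sets along a diagonal subsequence---is the correct one and is what the paper indicates. However, the quantitative heart of your argument is flawed: the claimed Lipschitz estimate
$d_H^{nM^*}(K^n_{l_s(n)}, K^n_{l_t(n)}) \leq |s-t|$
does not follow from $|\psi_n(l_s(n)) - \psi_n(l_t(n))| = |s-t|D_n$, and in fact the inequality runs in the opposite direction. For nested sets $K_2 \subseteq K_1 \subseteq B$ the triangle inequality for Hausdorff distance gives $d_H(K_2,B) \leq d_H(K_1,B) + d_H(K_1,K_2)$, i.e.\ $d_H(K_1,K_2) \geq \psi(K_2) - \psi(K_1)$, which is a \emph{lower} bound. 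A concrete counterexample, relevant here since $0 \in K^n_l$ throughout: in $\RR^2$ take $K_\lambda = [0,\lambda]\times\{0\} \subset B_1(0)$. Then $d_H(K_\lambda, B_1) = 1$ for every $\lambda \in [0,1]$ (the farthest point is always $(-1,0)$), so the difference of $\psi$-values is $0$, yet $d_H(K_\lambda, K_\mu) = |\lambda-\mu|$. This also shows a second problem: $\psi_n$ need not be injective, so $\psi_n^{-1}$ is ill-defined, and your $l_t(n)$ is not actually determined.

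Without the Lipschitz estimate the Arzel\`a--Ascoli/diagonal step does not produce a Hausdorff-continuous limit family, so property (b) is unjustified, and without injectivity of $\psi_n$ even the pointwise definition of the family is ambiguous. The construction needs a genuinely different source of equicontinuity. A cleaner route is to work not with the sets but with the convex, $1$-Lipschitz functions $u_n := n(a-f)$ on $(nM^*,x)$, which (along a subsequence) converge to a convex $1$-Lipschitz function $u$ on $B_1(0) \subset T_xM^*$ vanishing on $T_xC$; then take $C_t := \{u \leq t\} \cap B_1(0)$ and recover $C_t$ as the GH limit of $\{u_n \leq t\} \cap B_1^{nM^*}(x) = nC^{a - t/n} \cap B_1^{nM^*}(x)$, where the Lipschitz bound on $u$ supplies the Hausdorff continuity. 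Alternatively one can take a monotone diagonal limit over rational $t$ and reparametrize to repair discontinuities, but then property (e) must be re-checked for the reparametrized family. As written, the proposal does not close the gap it itself flags ("continuity and surjectivity of $\psi_n$"), and adds an incorrect Lipschitz claim on top of it.
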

\noindent \textit{Proof of sublemma \ref{convex family}.} The desired family is constructed via the various Gromov-Hausdorff limits of the rescaled level sets $nC^l$ for $n \in \NN$ and $l \in \RR$ as indicated by property $e)$. The details of the construction are left to the reader. Alternatively see \cite{spindeler14} or \cite{spindeler15}, lemma 3.23. \hfill $\square$.
\\

Now let $C_t$ be a family as given by sublemma \ref{convex family}. Let $p \in M$ with $\pi(p) = x$. Then by assumption $N := d\pi_p^{-1}(N_xC) \subseteq N_p(\G \ast p)$ is a linear $\G_p$-invariant subspace. Let $F \subseteq N$ denote the fixed point set of the $\G_p$-action on $N$. Then we have an orthogonal decomposition
$$N_p(\G \ast p) = F \oplus W.$$
Let
$$\rho : N_p(\G \ast p) \to W$$
denote the orthogonal projection along $F$. Note that $\rho$ naturally induces a projection
$$\rho^* : T_xM^* \to W/\G_p =: W^*.$$ As always we identify $F$ with $F/\G_p \subset T_xM^*$. Finally set 
$$\overline C_t = \rho^*(C_t).$$
Observe that $\overline C_0 = C_0 = T_xC \cap B_1(0)$ by property d) together with $T_xC \subseteq W^*$. Hence we want to prove that $T_0\overline C_0 = T_xC$ is linear, where $T_0\overline C_0$ denote the tangent cone to $\overline C_0$ at $0 = 0_x$. If $0$ is an interior point of $\overline C_0$ considered as a subset of $W^*$ it follows that $T_0\overline C_0 = W^*$ and we are done, since $W^*$ is linear. Therefore we may assume that $0$ is not an interior point of $\overline C_0$ in $W^*$.
Let
$$t_0 := \inf \{ t \in [0,1] \mid 0 \text{ is an interior point of } \overline C_t \text{ in } W^*\}.$$
From property c) of sublemma \ref{convex family} it follows $0 \leq t_0 < 1$.  Also note that $0$ is an interior point of $\overline C_t$ in $W^*$ for all $ t > t_0$ by property $a)$.

Step 1: We show that $\overline C_t$ is horizontally convex for all $t_0 \leq t \leq 1$: Observe that properties a) and b) hold analogeously for the family $\overline C_t$ and therefore it suffices to prove this for all $t_0 < t \leq 1$. Let $t_0 < t \leq 1$ be fixed and $\gamma : [0,1] \to T_xM^*$ be a horizontal geodesic with $\gamma(0) \in \overline C_t$ and $\gamma(1) \in \overline C_t$. We have to show that $\gamma ([0,1]) \subseteq \overline C_t$.

\begin{sublem}\label{ray} Let $\G$ act by isometries on a linear  euclidean space $U$ and $U^* = U/\G$. Let $C \subset U^*$ be convex such that $0$ is an interior point of $C$ and let $v \in \partial^\tau C$ (recall that $\partial^\tau C$ denotes the topological boundary of $C$ as a subset of $U^*$). Then for all $0 \leq \lambda < 1$ we have $\lambda v \in \mathring C$, the topological interior of $C$, and for all $\lambda > 1$ we have $\lambda v \notin C$. 
\end{sublem}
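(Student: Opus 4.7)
The plan is to first exploit the cone structure of $U^*$ to show that $C$ is star-shaped at $0$, then establish a uniform interior-ball estimate, and finally deduce both parts of the sublemma. For star-shapedness, observe that since $U$ is Euclidean and the $\G$-orbit of $0$ is trivial, every minimal geodesic in $U^* = U/\G$ emanating from $0$ lifts to a straight ray in $U$ starting at $0$; all such rays from $0$ to points of a common orbit $\G \hat x$ project to the single curve $t \mapsto tx$ in $U^*$, so this curve is the unique minimal geodesic from $0$ to $x$. Convexity of $C$ therefore forces $[0,1]\cdot x \subset C$ for every $x \in C$, so $C$ is star-shaped at $0$.

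The technical core is the following interior-ball estimate: fixing $\epsilon > 0$ with $B_\epsilon(0) \subset C$, I claim $B_{(1-\mu)\epsilon}(\mu w) \subset C$ for every $w \in C$ and every $\mu \in [0,1)$. To prove it I take $z \in B_{(1-\mu)\epsilon}(\mu w)$, fix a lift $\hat w$ of $w$ and choose a lift $\hat z$ of $z$ realising $|\hat z - \mu \hat w| = d(z, \mu w) < (1-\mu)\epsilon$. Setting $\hat u := (\hat z - \mu \hat w)/(1-\mu) \in U$ yields $|\hat u| < \epsilon$, so $u := \pi(\hat u) \in B_\epsilon(0) \subset C$, and $\hat z$ lies on the straight Euclidean segment $[\hat u, \hat w]$ at parameter $\mu$. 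By convexity of $C$ there is a minimal geodesic from $u$ to $w$ lying in $C$, which lifts to a straight segment $[\hat u^*, \hat w]$ for some $\hat u^* \in \G\hat u$ with $|\hat u^* - \hat w| = d(u,w)$. A careful argument exploiting the $\G$-equivariance of the flat structure on $U$ and the compactness of the orbits then exhibits a minimal geodesic of $C$ from some point of $B_\epsilon(0)$ to $w$ that passes through $z$, placing $z$ in $C$.

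With the estimate established, both halves of the sublemma follow quickly. For the first part, let $v \in \partial^\tau C$ and $\lambda \in [0,1)$: approximate $v$ by a sequence $v_n \in C$ with $v_n \to v$ and apply the estimate with $w = v_n$, $\mu = \lambda$ to obtain $B_{(1-\lambda)\epsilon}(\lambda v_n) \subset C$. Since the radius is independent of $n$ and $\lambda v_n \to \lambda v$, the point $\lambda v$ lies in $\mathring C$. For the second part, if $\lambda v \in C$ for some $\lambda > 1$, applying the estimate with $w = \lambda v \in C$ and $\mu = 1/\lambda \in (0,1)$ immediately gives $v = \mu w \in \mathring C$, contradicting $v \in \partial^\tau C$.

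The main obstacle is the interior-ball estimate: the naive Euclidean convex combination $\hat u = (\hat z - \mu \hat w)/(1-\mu)$ need not be the orbit representative of $u$ minimising distance to $\hat w$, so the projected segment $\pi([\hat u, \hat w])$ is in general only a (possibly non-minimal) geodesic of $U^*$, and convexity of $C$ does not automatically place it in $C$. Carefully coordinating the orbit representatives of $z$, $u$, and $w$ so that some minimal geodesic of $C$ actually passes through $z$ is the delicate part of the argument; this is precisely where convexity of $C$ is used beyond the purely radial star-shapedness established in the first step.
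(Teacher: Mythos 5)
The star-shapedness step is correct, and the final deduction from the interior-ball estimate to the two assertions of the sublemma is also correct. The genuine gap is the interior-ball estimate itself, and you have in effect identified it yourself without closing it. The issue is not a cosmetic one: setting $\hat u := (\hat z - \mu\hat w)/(1-\mu)$, convexity of $C$ only guarantees that \emph{some} minimal geodesic from $u=\pi(\hat u)$ to $w$ lies in $C$; that geodesic lifts to a segment $[\hat u, g\hat w]$ with $|\hat u - g\hat w| = d(u,w)$ for some $g \in \G$ which need not be the identity (indeed, the choice of $\hat z$ minimizing $|\hat z - \mu\hat w|$ does \emph{not} force $\hat w$ to be the closest orbit representative to $\hat u$, as one can check by expanding $\langle \hat z, \hat w - g\hat w\rangle$ in terms of $\hat u$). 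Consequently the point in $C$ that convexity produces is $\pi\bigl((1-\mu)\hat u + \mu g\hat w\bigr)$, which lies in $B_{(1-\mu)\epsilon}(\mu w)$ but is in general a different point of that ball than $z$. You get one point of $C$ near $\mu w$ for each choice of $\hat u$, but surjectivity onto the whole ball is exactly what has to be proved, and "a careful argument exploiting $\G$-equivariance and compactness of orbits" is a placeholder rather than a proof. Some additional input is needed — for instance uniqueness of minimal geodesics from generic points of $B_\epsilon(0)$ to $w$, a limiting/continuity argument along geodesics from $w$ through $z$ that exit near the origin, or a degree-type argument for the multivalued map $\hat u \mapsto \gamma_u(\mu)$ — none of which is supplied. (The paper itself leaves this sublemma to the reader, so there is no reference proof to compare against; but as written your argument does not establish the key estimate it relies on.)
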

\begin{proof}
Once more we leave the proof to the reader.
\end{proof}
Using this lemma we can without loss of generality assume that $\gamma(0)$ and $\gamma(1)$ are interior points of $\overline C_t$ in $W^*$, since otherwise we may approximate $\gamma$ by horizontal geodesics $\lambda \gamma$ with $\lambda < 1$. Since $W^*$ is linear and $\gamma(0), \gamma(1) \in W^*$, it is clear that $\gamma([0,1]) \subset W^*$. Since $C$ contains a regular point of $M^*$ and $T_xC \subset W^*$, a regular point of $W^*$ is also a regular point of $T_xM^*$. Hence way can also  without loss of generality assume that $\gamma(0)$ and $\gamma(1)$ are regular points of $T_xM^*$. Therefore we can further without loss of generality assume that $\gamma(0)$ and $\gamma(1)$ are not conjugate along $\gamma$ in $T_xM^*$ by lemma \ref{conjugate}. 

Now let $v,w \in C_t$ with $\rho^*(v) = \gamma(0)$ and $\rho^*(w) = \gamma(1)$ and choose a horizontal geodesic $\hat \gamma : [0,1] \to T_xM^*$ with $\hat \gamma(0) = v$, $\hat \gamma(1) = w$ and $\rho^* \circ \hat \gamma = \gamma$ (the existence of $\hat \gamma$ follows, since $T_xM^* = F \times W^*$ up to isometry). To show that $\gamma \subset \overline C_t$ we show that $\hat \gamma \subset C_t$: First observe that $v$ and $w$ are not conjugate along $\hat \gamma$ as well (this follows again from $T_xM^* = F \times W^*$). Let 
\begin{align}\label{huiuiui}
(T_xM^*, C_t, 0) = \lim_{n \to \infty} (nM^*, nC^{l_t(n)} \cap B^{nM^*}_1(x),x)
\end{align}
 according to property e) of sublemma \ref{convex family}. Let $\hat \gamma_n : [0,1] \to nM^*$ be a horizontal geodesic for every $n$ such that $\hat \gamma_n \to \hat \gamma$. Since $\hat \gamma(0), \hat \gamma(1) \in C_t$ and by \eqref{huiuiui}, we may by lemma \ref{conjugate sequence} further choose $\hat \gamma_n$ such that $\hat \gamma_n(0) \in nC^{a_t(n)}$ and $\hat \gamma_n(1) \in nC^{a_t(n)}$. Hence by horizontal convexity we deduce that $\hat \gamma_n([0,1]) \subseteq nC^{l(n)}$ for all $n$ and therefore $\hat \gamma ([0,1]) \subseteq C_t$. This completes step 1.

Step 2: We show that $N_0 \overline C_{t_0} \cap W^*$, the normal space to $\overline C_{t_0}$ in $W^*$ at $0$, is nontrivial. From the definition of $t_0$ it is clear that $\overline C_t$ has nonempty topological boundary in $W^*$, which we denote by $\partial^\tau \overline C_t$,  for all $t_0 < t$ sufficiently close to $t_0$. For all such $t$ let $c_t : [0,l_t] \to W^*$ be a minimal  arc length geodesic from $0$ to $\partial^\tau \overline C_t$. Let $t_n \to t_0$ be a sequence with $t_n > t_0$ such that $\dot c_{t_n}(0)$ converges to a limit $v_0$. We claim that $v_0 \in N_0\overline C_{t_0} \cap W^*$: Clearly $v \in W^*$. Assume on the contrary that $v \notin N_0\overline C_{t_0}$. Then, since $\dim T_xC \geq 1$, there exists $b \in \overline C_{t_0}\setminus \{0\}$ such that $2b \in \overline C_{t_0}$ and $\alpha_0 := \measuredangle (b,v_0) < \pi/2$. Let $\alpha_n := \measuredangle (\dot c_{t_n}(0),b)$. Then $\alpha_n \to \alpha_0$ for $n \to \infty$. Let $N \in \NN$ and $\delta > 0$ such that $\alpha_n < \pi/2 - \delta$ for all $n \geq N$. Also let $a_n : [0,1] \to W^*$ be a minimal segment from $b$ to $c_{t_n}(l_{t_n})$. By definition of $t_0$ and $c_{t_n}$ it follows that $c_{t_n}(l_{t_n}) \to 0$ for $n \to \infty$. Therefore there exists $n \in \NN$ such that $\beta_n > \pi/2$ for all $n \geq N$, where $\beta_n$ denotes the angle formed by $c_{t_n}$ and $a_n$ at $c_{t_n}(l_{t_n})$. Since $W^*$ is linear, we may extend every $a_n$ to a horizontal geodesic $a_n : [0,\infty[ \to W^*$. Then $\tilde \beta_n < \pi/2$ for all $N \geq N$, where $\tilde \beta_n$ denotes the angle between $c_{t_n}$ and ${a_n}_{\vert [1,\infty[}$. Fix $n \geq N$. There exists $\epsilon > 0$, such that $a_n(1 + \epsilon)$ is an interior point of $\overline C_{t_0}$ in $W^*$, since $c_{t_n}$ is minimal from $0$ to $\partial^\tau \overline C_{t_n}$. By horizontal convexity of $\overline C_{t_n}$, and since $2b \in \overline C_{t_n}$, it therefore follows that there exists $\lambda > 0$ such that $(1 + \lambda)a_n(t) \in \overline C_{t_n}$ for all $t \in [0,1 + \epsilon]$ (for that note that $(1 + \lambda)\alpha$ is a horizontal geodesic as well). But then also $(1 + \lambda)a_n(1) = (1 + \lambda)c_{t_n}(1) \in \overline C_{t_n}$, in contradiciton to sublemma \ref{ray}.

Step 3: We show that $N_0\overline C_{t_0} \cap W^*$ is linear. Let $N_1 = d\pi_p^{-1}(N_0\overline C_{t_0} \cap W^*)$. Then $N_1$ is a convex subset of $N_p(\G \ast p)$ and $N_1$ is invariant under $\G_p$. Since $C_0 = T_xC \cap B_1(0) \subseteq \overline C_{t_0}$ it follows that $N_1 \subset W \cap N$. Thus, by definition of $W$, the action of $\G_p$ on $N_1 \setminus \{0\}$ is fixed point free. Therefore $N_1$ must be a linear subspace of $N_p(\G \ast p)$  (a $\G_p$-action on a convex cone with nonempty boundary has fixed points different from $0$ coming from the unique direction of maximal angle to the boundary).

To complete the proof we iterate this argument: Define $W_1$ via the orthogonal decomposition 
$$N_p(\G \ast p) = F \oplus N_1 \oplus W_1$$
and set 
$$W_1^* = W_1/\G_p.$$
Note that $\overline C_{t_0} \subset W_1^*$ and therefore $\overline C_t \subset W^*_1$ for all $0 \leq t \leq t_0$.
Moreover, $0$ is an interior point of $\overline C_{t_0}$ in $W^*_1$ (otherwise a normal vector to $\overline C_{t_0}$ at $0$ which is contained in $W_1^*$ can be constructed analogous to step $2$ in contradiction to the definition of $N_1$ and $W_1$).
If $0$ is also an interior point of $\overline C_0$ in $W^*_1$ we are again done. Otherwise set
$$t_1 := \inf \{t \in [0,t_0] \mid 0 \text{ is an interior point of } \overline C_t \text{ in } W_1^*\}.$$
With $W_1^*$ in the role which was played by $W^*$ before the arguments of steps 1 to 3 now apply as well to $\overline C_t$ for $t_1 < t < t_0$ and yield that the normal space to $\overline C_{t_1}$ in $W_1^*$ is nontrivial and linear. Repeating this argument after a finite number of steps we find that $0$ is an interior point of $\overline C_0$ in $W_k^*$ since the dimension of $\overline C_{t_i}$ drops in every step and we are done.

It remains to discuss the case that $C$ does not contain a point with principal isotropy. Let $x \in C$ such that the set of points of $C$ of the same type as $x$ is open and dense in $C$. Let $p \in M$ with $\pi(p) = x$ and denote by $N$ the component of $\operatorname{Fix}(\G_p)$ containing $p$. Then $C \subset \pi(N)$ and $\pi(N)$ equipped with the induced intrinsic metric is isometric to $N/\mathsf H$ by lemma \ref{Fix}, where $\mathsf H \subset N(\G_p)$ denotes the subgroup of elements that leave $N$ invariant. Moreover $C^a \cap \pi(N)$ is horizontally convex in $\pi(N) = N/\mathsf H$ for all the level sets $C^a$, since a horizontal geodesic of $N$ with respect to the acton of $\mathsf H$ is a horizontal geodesic of $M$ with respect to the action of $\G$ as well. Thus, with the same proof as above applied to $N/\mathsf H$ and the family $C^a \cap \pi(N)$ it follows that $T_yC \subset T_y(N/\mathsf H)$ is linear if and only if $N_yC \subset T_y(N/\mathsf H)$ is linear. But clearly $T_yC \subset (N/\mathsf H)$ is linear if and only if $T_yC \subset T_yM^*$ is linear and analogeuously for the normal spaces.
\end{proof}

Recall that for a convex set $C$ we denote by $E$ the closure of the set of nonlinear boundary points of $C$.
\begin{cor}\label{nonlinear boundary closure}
The set of nonlinear points of $C^{a}$ equals the set $E$. In particular all points in $C^a \setminus \partial C^a$ are linear and $\pi^{-1}(C^a \setminus E)$ is a smooth submanifold of $M$ without boundary.
\end{cor}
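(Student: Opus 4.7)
The plan is to combine three earlier results: Corollary \ref{linear normal}, Proposition \ref{tangent linear iff normal linear}, and Proposition \ref{linear is open}. By Lemma \ref{horizontally convex}, $C^a$ is horizontally convex in $\Omega^* \setminus B$, so Proposition \ref{linear is open} applies and tells us that the set of linear points of $C^a$ is open in $C^a$; equivalently, the set of nonlinear points of $C^a$ is closed.

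For the main equality, I would argue by double inclusion. One direction is essentially free: every nonlinear boundary point is in particular a nonlinear point, and since the set of nonlinear points is closed, its closure $E$ of nonlinear boundary points is still contained in the set of nonlinear points. For the reverse inclusion, let $x \in C^a \setminus E$. Then $x$ is not in the closure of the set of nonlinear boundary points of $C^a$, so by Corollary \ref{linear normal} the normal cone $N_xC^a$ is linear. Now the key input is Proposition \ref{tangent linear iff normal linear}, which is specific to super level sets of the distance function $f$: it upgrades linearity of $N_xC^a$ to linearity of $T_xC^a$. This gives that $x$ is a linear point, completing the equality.

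For the ``in particular'' clause, I would just observe that $E \subseteq \partial C^a$: the topological boundary $\partial C^a$ (in the sense of Alexandrov boundary of $C^a$) is closed and by construction contains every nonlinear boundary point, hence contains their closure $E$. Therefore $C^a \setminus \partial C^a \subseteq C^a \setminus E$, and the first part shows every such point is linear.

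Finally, to conclude that $\pi^{-1}(C^a \setminus E)$ is a smooth submanifold of $M$ without boundary, I would appeal to Corollary \ref{submanifold}. For this I need $C^a \setminus E$ to be locally closed and locally convex with all points linear. Local closedness is immediate since $E$ is closed and $C^a$ is closed in $\Omega^*$, and local convexity is inherited from $C^a$ upon passing to a relatively open subset. Linearity of every point has just been established. The main obstacle in this whole argument has already been overcome in Proposition \ref{tangent linear iff normal linear}; granted that result, the present corollary is essentially a packaging of the previously developed machinery.
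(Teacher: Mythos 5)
Your proposal is correct and takes exactly the route the paper does: the paper's own proof is a one-line citation of Proposition \ref{tangent linear iff normal linear}, Corollary \ref{linear normal}, and Proposition \ref{linear is open}, and your write-up simply unpacks how those three pieces combine (Prop.\ \ref{linear is open} for closedness of the nonlinear set hence $E\subseteq\{$nonlinear$\}$, Cor.\ \ref{linear normal} plus Prop.\ \ref{tangent linear iff normal linear} for the reverse inclusion, and Cor.\ \ref{submanifold} for the final regularity statement).
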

\begin{proof}
This follows from proposition \ref{tangent linear iff normal linear} in combination with lemma \ref{linear normal} and proposition \ref{linear is open}.
\end{proof}
Now let us construct a soul $\Sigma$ of $M^*$ as follows. Let $\Omega^*_1$ denote the set of maximal distance to $\partial M^*$. If $\partial \Omega^*_1 = \emptyset$ set $\Omega^*_1 =: \Sigma$. In particular it follows from corollary \ref{nonlinear boundary closure} that $\Omega_1 := \pi^{-1}(\Omega^*_1)$ is a smooth closed submanifold. If otherwise $\partial \Omega^*_1 \neq \emptyset$ let $\Omega^*_2$ be the set of points of $\Omega^*_1$ of maximal distance to $\partial \Omega^*_1$. Again if $\partial \Omega^*_2$ is empty set $\Sigma := \Omega^*_2$ and it follows that $\Omega_2 := \pi^{-1}(\Sigma)$ is a smooth closed submanifold. Otherwise consider $\Omega^*_3$ and so on. After a finite number of steps we have constructed a soul $\Sigma$ of $M^*$ such that $\pi^{-1}(\Sigma) \subset M$ is a smooth closed submanifold. 
\begin{cor}
Let $\Sigma \subset M^*$ be a soul constructed as above. Then $\pi^{-1}(\Sigma) \subset M$ is a smooth closed submanifold. 
\end{cor}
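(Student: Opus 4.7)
The plan is to prove by induction on the step $i$ of the iterative soul construction that the set $\Omega^*_i$ obtained at each stage satisfies the standing hypotheses of Subsection \ref{subsection distance}, and that $\pi^{-1}(\Omega^*_i\setminus\partial\Omega^*_i)$ is already a smooth submanifold of $M$ without boundary. Writing $\Omega^*_0:=M^*$, the base case is trivial: $M^*$ is (locally) horizontally convex with $\partial M^*\neq\emptyset$ by the standing hypothesis of the paper, and $\pi^{-1}(M^*\setminus\partial M^*)$ is an open subset of the smooth manifold $M$.

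For the inductive step, assume $\Omega^*_i$ is closed and locally horizontally convex, that $\partial\Omega^*_i\neq\emptyset$, and that $\pi^{-1}(\Omega^*_i\setminus\partial\Omega^*_i)$ is a smooth submanifold without boundary of $M$. These are exactly the data required to apply Subsection \ref{subsection distance} with $\Omega^*:=\Omega^*_i$ and $B:=\partial\Omega^*_i$. The distance function $d_B$ is then concave on $\Omega^*_i\setminus B$, so its top level set $\Omega^*_{i+1}=C^{a_i}$ is closed; by Lemma \ref{horizontally convex} it is horizontally convex in $\Omega^*_i\setminus B$, and since $\Omega^*_{i+1}$ lies in $\Omega^*_i\setminus B$ and $\Omega^*_i$ is itself locally horizontally convex in $M^*$, the set $\Omega^*_{i+1}$ inherits local horizontal convexity in $M^*$.

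Next, I invoke Corollary \ref{nonlinear boundary closure} on $C^{a_i}=\Omega^*_{i+1}$: every point of $\Omega^*_{i+1}\setminus\partial\Omega^*_{i+1}$ is linear. Since $\Omega^*_{i+1}\setminus\partial\Omega^*_{i+1}$ is locally closed (as the intersection of the closed set $\Omega^*_{i+1}$ with the open complement of $\partial\Omega^*_{i+1}$) and locally convex, Corollary \ref{submanifold} yields that $\pi^{-1}(\Omega^*_{i+1}\setminus\partial\Omega^*_{i+1})$ is a smooth submanifold of $M$ without boundary. This completes the inductive step.

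Finally, because $\Omega^*_{i+1}\subseteq\Omega^*_i\setminus\partial\Omega^*_i$, the dimension of $\Omega^*_i$ drops strictly at every iteration at which $\partial\Omega^*_i\neq\emptyset$, so after finitely many steps we reach a stage $k$ where $\partial\Omega^*_k=\emptyset$; by definition this is the soul $\Sigma$. Then $\Sigma=\Omega^*_k\setminus\partial\Omega^*_k$, so by the inductive conclusion $\pi^{-1}(\Sigma)$ is a smooth submanifold of $M$ without boundary, and closedness follows from compactness of $\Sigma\subseteq M^*$. The only point demanding care is verifying that the hypotheses of Subsection \ref{subsection distance} really do propagate from $\Omega^*_i$ to $\Omega^*_{i+1}$, i.e. that local horizontal convexity survives (handled by Lemma \ref{horizontally convex}) and that the smoothness condition on the complement of the boundary survives (the main use of Corollary \ref{nonlinear boundary closure}, which is the nontrivial geometric input of the section). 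Once these are in place, everything else is bookkeeping.
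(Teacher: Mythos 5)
Your argument is correct and essentially coincides with the paper's, which does not give a separate proof but lets the construction immediately preceding the corollary speak for itself; you have simply organized it as an explicit induction on the stages $\Omega^*_i$. The inputs you identify---Lemma \ref{horizontally convex} to propagate (local) horizontal convexity and Corollary \ref{nonlinear boundary closure} to guarantee linearity, hence smoothness of the preimage, off $\partial\Omega^*_{i+1}$---are precisely the ones the paper relies on, and your dimension-drop observation (needed for termination, and justified since $d_{\partial\Omega^*_i}$ has unit directional derivative along minimal geodesics to $\partial\Omega^*_i$ and so cannot be locally constant) is likewise implicit in the paper.
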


%=================================================================
Our main theorem \ref{main} does not follow yet, since in general the distance function $d_\Sigma$ may have critical points in $M^* \setminus (B \cup \Sigma)$. In order to prove our main theorem in a similar fashion we thus need to gain control over the regularity of the distance function to a 'soul' (the result $\Sigma \subset M^*$ constructed in the proof of theorem \ref{main} in the next section may have boundary, while $\pi^{-1}(\Sigma)$ does not. Hence the name soul will not be used further). 

\begin{prop}\label{stratum}
$E$ is a boundary stratum of $C^{a}$. Also let 
$$C_1 := \{x \in C^{a} \mid d(x,E) \text{ is maximal }\}.$$
Then $d_{C_1} : \Omega^* \to \RR$ is regular at every $x \in \Omega^* \setminus (B \cup C_1)$.
\end{prop}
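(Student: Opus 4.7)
The plan is to address the two assertions in sequence: first identify $E$ as a boundary stratum of $C^a$ using the structure theory of the preceding subsections, then derive the regularity of $d_{C_1}$ from the concavity of the two distance functions $d_B$ (on $\Omega^*$) and $d_E$ (on $C^a$) via standard gradient calculus for concave functions in Alexandrov geometry.

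For the first assertion, Corollary \ref{nonlinear boundary closure} identifies $E$ with the full set of nonlinear points of $C^a$ and places $E \subseteq \partial C^a$. Proposition \ref{linear is open}, applied to the horizontally convex set $C^a$ (horizontal convexity by Lemma \ref{horizontally convex}), shows that $E$ is closed in $C^a$. To exhibit $E$ as a boundary stratum, observe that along the main part of any primitive extremal subset of $C^a$ the orbit type is constant up to conjugation, so by the slice theorem nearby points of the same main part admit an equivariant identification of the pairs $(\G_p, T_xC^a)$. Hence linearity is a stratum-invariant property, and $E$, being closed and saturated under the primitive extremal stratification, is a union of primitive extremal subsets; since $E \subseteq \partial C^a$ and the boundary stratification of $C^a$ is generated by its codimension-$1$ main parts, these are all of codimension $1$, so $E$ is a boundary stratum.

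For the regularity assertion, apply Lemma \ref{distance boundary} to the nonnegatively curved Alexandrov space $C^a$ with boundary stratum $E$ to conclude that $d_E : C^a \to \RR$ is concave on $C^a \setminus E$ with maximum set precisely $C_1$. Split into two cases. If $x \in C^a \setminus C_1$, standard critical-point theory for concave functions on Alexandrov spaces immediately yields regularity of the distance to the maximum set: every initial direction $\xi$ of a minimal geodesic from $x$ to $C_1$ satisfies $D_\xi d_E \geq (\max d_E - d_E(x))/d(x,C_1) > 0$, so all such $\xi$ lie in the open spherical cap around $\nabla d_E(x)$ of radius strictly less than $\pi/2$, and any $\eta \in \Sigma_x C^a$ in the antipodal cap witnesses regularity of $d_{C_1}$ at $x$. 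If instead $x \in \Omega^* \setminus C^a$, work with $f = d_B$, which is concave on $\Omega^* \setminus B$ with maximum set $C^a$. For any minimal geodesic $\gamma : [0, \ell] \to \Omega^*$ from $x$ to a point of $C_1 \subseteq C^a$, concavity of $f \circ \gamma$ together with $f(\gamma(0)) = f(x) < a = f(\gamma(\ell))$ forces $f \circ \gamma$ to remain positive along $\gamma$ (so $\gamma$ avoids $B$) and yields the uniform estimate
$$(f \circ \gamma)'(0) \geq \frac{a - f(x)}{d(x, C_1)} =: c > 0,$$
independent of $\gamma$. Using $|\nabla f| \leq 1$ ($f$ is $1$-Lipschitz), the Alexandrov gradient calculus then gives a uniform angular bound $\measuredangle(\dot \gamma(0), \nabla f(x)) \leq \arccos(c) < \pi/2$, and any $\eta \in \Sigma_x \Omega^*$ sufficiently far from $\nabla f(x)$ makes angle strictly greater than $\pi/2$ with every such $\dot \gamma(0)$, witnessing the desired regularity.

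The main obstacle is the first assertion: extremality of $E$ is not transparent from its definition via linearity, and the argument relies on the extremal stratification of $C^a$ refining the orbit-type stratification finely enough to make linearity stratum-invariant. The regularity statement is then a fairly direct application of Alexandrov gradient techniques, with the only technical subtlety being the uniform angle bound in the second case, which bridges points outside $C^a$ to the strictly smaller target $C_1$.
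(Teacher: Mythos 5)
Your proposal has two genuine gaps, both located precisely where the paper's proof has to work hardest.

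\textbf{Extremality of $E$.} You never actually verify that $E$ is extremal in $C^a$. Your argument proceeds by claiming that linearity is constant along orbit-type strata (plausible), and then jumping to the conclusion that $E$ is ``saturated under the primitive extremal stratification'' of $C^a$. But the extremal stratification of $C^a$ as an Alexandrov space is an intrinsic notion that has no a priori relationship to the orbit-type stratification inherited from $M^*$. For example, a half-disk in $\RR^2$ (trivial $\G$-action, so a single orbit type) has a rich extremal structure: its boundary, and plausibly its corner points, are extremal. So being a union of orbit-type strata does not make $E$ a union of extremal strata, and without that you cannot conclude $E$ is a union of primitive extremal subsets. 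The paper instead verifies extremality directly from the definition: it takes $x \in C^a \setminus E$, $y \in E$ a local minimum of $d_x\vert_E$, assumes $y$ is not critical for $x$, and derives a contradiction using the exponentiated geodesic $\alpha$ in $\Sigma_y C^a$, the orthogonal splitting $W = (d\pi_p^{-1}(N_yC^a))^\perp = \RR^k \times Z$ from the splitting theorem (where $Z$ is a nontrivial line-free cone), and Sublemma~\ref{W and Z}. This is a substantive geometric argument, not something that follows from bookkeeping on strata. Similarly, you assert codimension $1$ without argument; the paper devotes Sublemma~\ref{zput} (an induction using Proposition~\ref{tangent linear iff normal linear}) to this point.

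\textbf{Regularity at $x \in E$.} You collapse the interior cases into ``$x \in C^a \setminus C_1$'' and say that one can take ``any $\eta \in \Sigma_x C^a$ in the antipodal cap'' of $\nabla d_E$. For $x \in C^a \setminus (E \cup C_1)$ this is fine, but for $x \in E$ the set $\Sigma_x C^a$ is a proper convex subset of $\Sigma_x \Omega^*$ with nonempty boundary (indeed $E \subseteq \partial C^a$), and the antipode of a direction in $\Sigma_x C^a$ need not lie in $\Sigma_x C^a$, nor even in $\Sigma_x \Omega^*$. So the uniform angle bound on $\Gamma$ does not automatically produce a regularizing direction $\eta$; its existence is exactly what requires proof. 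The paper handles $x \in E$ by a separate argument: using Sublemma~\ref{W and Z} it shows $d\pi_p^{-1}(\Gamma)$ avoids $\RR^k \times \{0\}$ in the splitting $W = \RR^k \times Z$, then exploits the fact that the line-free cone $Z$ admits a vector $u_0 \in d\pi_p^{-1}(N_xC^a)$ with $\measuredangle(u_0, Z) > \pi/2$, and finally checks that $u_0$ makes angle $>\pi/2$ with $d\pi_p^{-1}(\Gamma)$. That construction of $\eta$ from the normal cone is the missing content in your proof, and it cannot be replaced by generic gradient calculus for concave functions.

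The remaining pieces of your argument (the two outer cases of the regularity claim) are consistent with the paper's approach, but those are precisely the parts the paper dismisses as easy.
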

\begin{proof}
First we show that $E$ is extremal: Let $x \in C^a \setminus E$ and $y \in E$ such that $d_x$ restricted to $E$ has a local minimum at $y$. We need to show that $y$ is critical for $x$. Assume on the contrary. Let $\Gamma$ denote the set of initial directions of minimal geodesics from $y$ to $x$. Then there exists $v \in \Sigma_yC^a$ with $\measuredangle (v,\Gamma) > \pi/2$. We may choose such a $v$ which is also a regular point of $T_yC^a$ and not contained in $\partial T_yC^a$. Let $\alpha : [0,l] \to \Sigma_yC^a$ be a minimal arc length geodesic from $v$ to $\Gamma$. So $l > \pi/2$. Since $d_x$ restricted to $E$ has a local minimum at $y$ it is clear that $\measuredangle (\alpha(l),T_yE) \geq \pi/2$. Using lemma \ref{extension} it follows that $\alpha$ may be extended horizontally to $\alpha : [0,l + \pi /2] \to \Sigma_yC^a$. Now let $p \in M$ with $\pi(p) = y$ and consider $N := d\pi^{-1}_p(N_yC^a)$. Since $y \in E$ it follows from proposition \ref{tangent linear iff normal linear} that $N$ is convex but not linear. Let $W = N^\perp$. Then $W$ is nonlinear and convex as well and by the splitting theorem we may therefore write 
$$W = \RR^k \times Z$$
up to isometry, where $Z$ is a nontrivial convex cone that does not contain a line. Let $\hat \alpha$ be a horizontal lift of $\alpha$. Since $d\pi^{-1}_p(T_yC^a) \subset W$ it follows that $\hat \alpha : [0,l + \pi/2] \to W$. Since $\hat \alpha$ is parametrized by arc length and $l > \pi/2$ it follows that $\hat \alpha$ in fact maps to $\RR^k \times \{0\}$. Now, since $v = \alpha(0) \in T_yC^a$ is regular and not contained in $\partial T_yC^a$, it follows that $\alpha (0)$ is a linear point of $T_yC^a$ in contradiction to the following observation.

\begin{sublem}\label{W and Z}
Let $y \in C^a$ be a nonlinear point and $W := (d\pi_p^{-1}(N_yC^a))^\perp$. Write up to isometry $W = \RR^k \times Z$, where $Z$ does not contain a line. Then all points in $d\pi_p(\RR^k \times \{0\}) \cap T_yC^a$ are nonlinear points of $T_yC^a$
\end{sublem}
\begin{proof}
Assume that a point $u \in d\pi_p(\RR^k \times \{0\}) \cap T_yC^a$ is a linear point of $C^a$. Let $V := d\pi_p^{-1}(T_yC^a)$ and $\hat u \in V$ with $d\pi_p(\hat u) = u$. Then $T_{\hat u}V$ is linear. Since $V \subseteq W$ and $Z$ does not contain a line it follows that $T_{\hat u}V \subseteq \RR^k \times \{0\}$. But then it follows from convexity of $T_yC^a$ that $V \subseteq \RR^k \times \{0\}$ as well (for that note that $\RR^k \times \{0\}$ is $\G_p$-invariant). But then $Z \subset V^\perp = d\pi^{-1}(N_yC^a)$. This is a contradiction to the definition of $W$, since $Z$ is nontrivial.
\end{proof}

To show that $E$ defines a boundary stratum of $C^a$ it remains to show that $E$ has locally constant codimension $1$ in $C^a$. Let $x \in E$. If $x$ is contained in the closure of the set of regular boundary points it is clear that $x$ is contained in a primitve extremal subset of codimension $1$, since since the set of regular boundary points is open in $\partial C$ and is contained in $E$. Therefore we may assume that $x$ is not contained in the closure of the set of regular boundary points. Then the claim follows from the next sublemma in combination with proposition \ref{tangent linear iff normal linear}.

\begin{sublem}\label{zput}
Let $C \subset M^*$ be locally cosed and locally convex and have no regular boundary point. Further assume that $C$ satisfies the following: For all $x \in C$ the tangent cone $T_xC$ is linear if and only if $N_xC$ is linear. Then, if the set of nonlinear boundary points of $C$ is nonempty, it has codimension $1$ in $C$.
\end{sublem}
\begin{proof}We argue by induction on $n = \dim C \geq 1$. If $n = 1$ the claim follows trivially, since $\partial C$ is discrete in $C$. So let $n \geq 2$ and $x \in E$. Since $C$ does not contain a regular boundary point, by lemma \ref{exponentiating interior points} there exists $\rho > 0$ such that 
\begin{align}\label{homeo}
\exp_x : B_\rho(0_x) \cap T_xC \to C \cap B_\rho(x)
\end{align} 
is a homeomorphism. It follows that $T_xC$ does not contain any regular boundary points as well. We claim that for all $v \in T_xC$ the tangent cone $T_vT_xC$ is linear if and only if the normal cone $N_vT_xC$ is linear:

%\begin{proof}
Clearly $N_vT_xC$ is linear if $T_vT_xC$ is linear. Therefore we assume that $N_vT_xC$ is linear and have to show that $T_vT_xC$ is linear as well. Let $p \in M$ with $\pi(p) = x$, $\hat v \in N_p(\G \ast p)$ with $d\pi({\hat v}) = v$ and $V = d\pi_p^{-1}(T_xC)$. Let $N_{\hat v}V$ denote the normal cone to $V$ at ${\hat v}$, which we consider as an affine subspace of $N_p(\G \ast p)$ centered at $\hat v$. Observe that $N_vT_xC$ is linear if and only if $N_{\hat v}V$ is an affine linear subspace. Let $W = (N_{\hat v}V)^\perp$. Then $W$ is affine linear as well with $T_{\hat v}V \subseteq W$. Let $Conv(T_{\hat v}V)$ denote the convex closure of $T_{\hat v}V$, i.e. the smallest convex subset of $N_p(\G \ast p)$ containing $T_{\hat v}V$. It is straight forward to check that $N_{\hat v}V$ is linear if and only if $Conv(T_{\hat v}V) = W$. Possibly after scaling we may assume that $\rho > |v| = |{\hat v}|$ and that the map $\exp_p : N^{< \rho}_p(\G \ast p) \to M$ is a Diffeomorphism onto its image which is a smooth subma\-nifold of $M$. Now it is again straight forward to check that 
\begin{align}
N_{\exp_p({\hat v})}\pi^{-1}(C) &= (Conv(T_{\exp_p({\hat v})}\pi^{-1}(C)))^\perp\\ &= (Conv((d\exp_p)_{\hat v}(T_{\hat v}V)))^\perp \label{this}\\
&= (d\exp_p)_{\hat v}(Conv(T_{\hat v}V))^\perp \label{that} \\
&= ((d\exp_p)_{\hat v}(W))^\perp.
\end{align}
Here \eqref{this} follows by \eqref{homeo} and \eqref{that} follows, since $d\exp_p$ is linear. Since $W$ is an affine linear space centered at ${\hat v}$, $(d\exp_p)_{\hat v}(W) \subseteq T_{\exp_p({\hat v})}M$ is a linear subspace and therefore $N_{\exp_p({\hat v})}\pi^{-1}(C)$ is linear. Consequently $N_{\exp_x(v)}C$ is linear and therefore  also $T_{\exp_x(v)}C$ is linear by assumption. Then it follows that $T_vT_xC$ is linear as well, again using \eqref{homeo}.

Consequently $\Sigma_xC$ does not contain a regular boundary point and $T_v\Sigma_xC$ is linear if and only if $N_v\Sigma_xC$ is linear for all $v \in \Sigma_xC$. To apply the induction hypothesis we show that the set of nonlinear boundary points of $\Sigma_xC$ is nonempty: Assume on the contrary that it is empty. Then by lemma \ref{extension} every horizontal geodesic of $\Sigma_xC$ can be extended infinitely. Then it follows as in the proof of corollary \ref{minus} that for all $d\pi_p(v) \in T_xC$ we have $d\pi_p(-v) \in T_xC$. Then analogously to corollary \ref{linear normal} it follows that $N_xC$ is linear. But then also $T_xC$ is linear by assumption. Thus, by \eqref{homeo}, all points in a neighborhood of $x$ in $C$ are linear, in contradiction to the choice $x \in E$. Therefore, by the induction hypothesis, the set of nonlinear boundary points has codimension $1$ in $\Sigma_xC$. Hence the same holds for $T_xC$. Thus the same holds as well for $C$, again by $\eqref{homeo}$.
\end{proof}

Finally we turn to the regularity of $d_{C_1}$: Since $d_B : \Omega^* \setminus B \to \RR$ is concave it is easily seen that $d_{C_1}$ is regular at every point $y \in \Omega^* \setminus (B \cup C^{a})$. Also $d_E : C^{a} \to \RR$ is concave, since $E$ is a boundary stratum of $C^a$. Therefore it follows that $d_{C_1}$ is regular at every point $y \in C^{a} \setminus (E \cup C_1)$. It remains to show that $d_{C_1}$ is regular at $x \in E$: Let $\Gamma \subset \Sigma_xC$ denote the set of directions of minimal geodesics from $x$ to $C_1$. Let $N = d\pi_p^{-1}(N_xC)$ and $W = N^\perp = \RR^k \times Z$ as above. From sublemma \ref{W and Z} it follows that
\begin{align}\label{bufa}
d\pi_p^{-1}(\Gamma) \cap (\RR^k \times \{0\}) = \emptyset,
\end{align}
since all points of $\Gamma$ are linear points of $\Sigma_xC$ because the nonlinear points of $C^a$ are given by $E$. Since $Z$ does not contain a line, there exists $u_0 \in d\pi_p^{-1}(N_xC)$ with $\measuredangle (u_0,Z) > \pi /2$. It then follows from \eqref{bufa} that $\measuredangle (d\pi_p^{-1}(\Gamma),u_0) > \pi /2$ as well and we are done.
\end{proof}

%==================================subsection=====================

\section{proof of theorem \ref{main} and further results}\label{section proof main}
For this section we fix a compact nonnegatively curved Riemannian manifold $M$ equipped with an isometric action by a compact Liegroup $\G$ in way that the quotient space $M^*$ has nonempty boundary. Let $B$ be any boundary stratum of $M$. We begin with the proof of our main theorem.
\\

\textit{Proof of theorem \ref{main}.} We use the results of section \ref{subsection distance} freely. Let $\Omega^*_1 \subset M^*$ denote the set of maximal distance to $B$. Then $\Omega^*_1$ is horizontally convex and the set $E_1$ of nonlinear points of $\Omega^*_1$ defines a boundary stratum of $\Omega^*_1$ if $E_1$ is nonempty.

\textit{Case 1.} Let $E_1 = \emptyset$. Then $N := \pi^{-1}(\Omega^*_1)$ is a closed smooth $\G$-invariant submanifold of $M$. Also all points $p \in M \setminus (\pi^{-1}(B) \cup N)$ are noncritical points for the distance function $d_N$. Then the claim follows from standard methods in critical point theory.

\textit{Case 2.} Let $E_1 \neq \emptyset$. Set 
$$\Omega^*_2 := \{x \in \Omega^*_1 \mid d(x,E_1) \text{ is maximal }\}.$$
Then $\Omega^*_2$ is horizontally convex as well and $d_{\Omega^*_2}$ is regular on $M^* \setminus (B \cup \Omega^*_2)$. Let $E_2 \subset \Omega^*_2$ denote the nonlinear points of $\Omega^*_2$. Then again $E_2$ defines a boundary stratum of $\Omega^*_2$ if $E_2$ is nonempty.

\textit{Case 2.1.} Assume $E_2 = \emptyset$. Then we set $N = \pi^{-1}(\Omega^*_2$ and again the claim follows.

\textit{Case 2.2.} Assume $E_2 \neq \emptyset$. Then we consider $$\Omega^*_3 = \{x \in \Omega^*_2 \mid d(x,E_2) \text{ is maximal }\}$$
and the set $E_3$ of nonlinear points of $\Omega^*_3$.
\\

Iterating this argument after a finite number of steps we find $E_k = \emptyset$, since the dimension of $E_k$ increases at every step and we are done. \hfill$\square$
\\

Having finished the proof of theorem \ref{main} let us note that our arguments should apply as well for quotient spaces of Riemannian foliations or for orbifolds leading to analogous results.
\\

In the following we discuss some implications of theorem \ref{main}. For that we fix a submanifold $N \subset M$ as given by theorem \ref{main}.
\\

Every boundary stratum is a finite collection of faces of the boundary $\partial M^*$, by which we mean the type components of $M^*$ of codimension $1$. The codimension of the preimage of such a face $F$ is given by $\dim \mathsf K/\mathsf H + 1$, where $\mathsf K$ is a isotropy group of generic type of $F$ and $\mathsf H \subset \mathsf K$ is of principal type. Let a decomposition
\begin{align}\label{normal bundle}
M \setminus \pi{^-}(B) \cong \nu N
\end{align}
as in theorem \ref{main} be given, where $\nu N$ denotes the normal bundle of $N$. Analogously to the soul orbit theorem in \cite{wilking06} the connectedness of the inclusion map $N \hookrightarrow M$ is then restricted by a face of $B$ whose generic type is of minimal dimension.
\begin{cor}
Let $F$ be a face of $B$ of generic isotropy type $\mathsf K$ such that $\dim \mathsf K$ is minimal among all dimensions of generic isotropy groups of faces contained in $B$. Then the inclusion map $N \hookrightarrow M$ is $\dim \mathsf K/\mathsf H$-connected, where $\mathsf H \subset \mathsf K$ is of principal type.
\end{cor}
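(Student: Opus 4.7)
The plan is to reduce the corollary to a standard stratified transversality argument. By Theorem \ref{main} we have the equivariant diffeomorphism \eqref{normal bundle}; the zero section of $\nu N \to N$ therefore provides a deformation retraction of $M \setminus \pi^{-1}(B)$ onto $N$, so the inclusion $N \hookrightarrow M \setminus \pi^{-1}(B)$ is a homotopy equivalence. It is thus enough to show that
\[
j \colon M \setminus \pi^{-1}(B) \hookrightarrow M
\]
is $k$-connected for $k := \dim \mathsf K/\mathsf H$, i.e.\ that $\pi_i(M, M \setminus \pi^{-1}(B)) = 0$ for all $i \leq k$.

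The first step is to stratify $B \subset M^*$ by orbit type and bound the codimension in $M$ of the preimage of each stratum from below. If $S \subseteq B$ is an orbit type stratum, then $S$ is a connected type component of $M^*$ with $\operatorname{codim}_{M^*} S \geq 1$, and by the slice theorem $\pi^{-1}(S) \subset M$ is a smooth $\G$-invariant submanifold. Writing $\mathsf K_S$ for the isotropy along $S$, a direct dimension count using $\dim M = \dim M^* + \dim \G/\mathsf H$ and $\dim \pi^{-1}(S) = \dim S + \dim \G/\mathsf K_S$ yields
\[
\operatorname{codim}_M \pi^{-1}(S) \;=\; \operatorname{codim}_{M^*} S + \dim \mathsf K_S/\mathsf H.
\]
Now $S$ is contained in the closure of some face $F' \subseteq B$, and the slice theorem forces $\mathsf K_S$ to contain a conjugate of $\mathsf K_{F'}$; hence $\dim \mathsf K_S \geq \dim \mathsf K_{F'} \geq \dim \mathsf K$ by the minimality assumption on $F$. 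Combined with $\operatorname{codim}_{M^*}S \geq 1$, this gives $\operatorname{codim}_M \pi^{-1}(S) \geq k + 1$, with equality achieved along $\pi^{-1}(F)$, so the bound is sharp.

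With this in hand, I would complete the proof by a stratified general position argument. Given a continuous map of pairs $(D^i, S^{i-1}) \to (M, M \setminus \pi^{-1}(B))$ with $i \leq k$, first smooth it, and then, working up the finite orbit-type stratification in order of increasing dimension, perturb it rel $S^{i-1}$ to be transverse to each of the smooth submanifolds $\pi^{-1}(S)$. Since each such submanifold has codimension at least $k+1 > i$, transversality forces the image to miss it; each perturbation is chosen small enough to avoid re-entering the strata already pushed off. The resulting map lies entirely in $M \setminus \pi^{-1}(B)$ rel boundary, so $\pi_i(M, M \setminus \pi^{-1}(B)) = 0$ for every $i \leq k$.

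The only delicate point is this last step: $\pi^{-1}(B)$ is not a single smooth submanifold of $M$ but a finite union of them, indexed by the orbit type strata of $B$. Hence transversality must be applied stratum by stratum, starting from the lowest-dimensional (and therefore closed in $M$) stratum and proceeding upward. This is exactly the stratified transversality argument that appears in Wilking's proof of the soul orbit theorem in \cite{wilking06}, and once the uniform codimension bound of the second paragraph is established it is entirely standard.
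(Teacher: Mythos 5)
Your proposal is correct and follows essentially the same route as the paper: both reduce to the codimension estimate $\operatorname{codim}_M \pi^{-1}(F') \geq \dim \mathsf K/\mathsf H + 1$ for every face $F' \subseteq B$ (and, stratum by stratum, for the lower-dimensional orbit type strata) and then invoke the transversality/general position argument from Wilking's soul orbit theorem. You have simply spelled out the dimension count and the stratified transversality step in more detail than the paper's one-line proof.
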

\begin{proof}
This follows from \eqref{normal bundle} together with $\dim \pi^{-1}(F) \leq \dim M - \dim \mathsf K/ \mathsf H - 1$ for every face $F \subseteq B$, compare the proof of the soul orbit theorem in \cite{wilking06}.
\end{proof}
Given that $\pi^{-1}(B)$ is a smooth submanifold of $M$ as well the situation becomes somewhat nicer. Then by the regularity of $d_N$ a gradientlike vectorfield $X$ with respect to $N$ on $M \setminus (\pi^{-1}(B) \cup N)$ can be constructed which is radial near $N$ and $\pi^{-1}(B)$. By the flow of $X$ we then obtain a diffeomorphism $\partial D(\pi^{-1}(B)) \cong \partial D(N)$, where $D(\pi^{-1}(B))$ and $D(N)$ denote the respective normal disc bundles of $\pi^{-1}(B)$ and $N$. Therefore we obtain the following corollary.
\begin{cor}\label{ddb}
Assume that $\pi^{-1}(B)$ is a smooth submanifold of $M$. Then $M$ is equivariantly diffeomorphic to the normal disc bundles of $\pi^{-1}(B)$ and $N$ glued together along their boundaries;
\begin{align}\label{here?}
M \cong D(\pi^{-1}(B)) \cup_\partial D(N).
\end{align}
\end{cor}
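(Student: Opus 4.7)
\textit{Proof proposal for Corollary \ref{ddb}.} The plan is to exploit the regularity of $d_N$ on $M \setminus (\pi^{-1}(B) \cup N)$, together with the additional assumption that $\pi^{-1}(B)$ is a smooth $\G$-invariant submanifold, in order to build a $\G$-equivariant gradient-like vector field whose flow identifies a collar of $\partial D(\pi^{-1}(B))$ with a collar of $\partial D(N)$.

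First, since $\pi^{-1}(B)$ and $N$ are disjoint closed smooth $\G$-invariant submanifolds of the compact manifold $M$, one can choose $\epsilon, \delta > 0$ so small that the closed $\G$-tubular neighborhoods $D_\epsilon(\pi^{-1}(B))$ and $D_\delta(N)$ are disjoint and have smooth $\G$-invariant boundary spheres $S_\epsilon(\pi^{-1}(B))$ and $S_\delta(N)$. On the compact $\G$-invariant cobordism
\[
W := M \setminus \bigl(\mathring{D}_\epsilon(\pi^{-1}(B)) \cup \mathring{D}_\delta(N)\bigr),
\]
I would construct a $\G$-invariant vector field $X$ which equals the negative radial field $-\nabla d_{\pi^{-1}(B)}$ near $S_\epsilon(\pi^{-1}(B))$, equals the radial field $\nabla d_N$ near $S_\delta(N)$, and satisfies $X(d_N) > 0$ everywhere on $W$. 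Such an $X$ exists because $d_N$ is noncritical on $W$ (by Theorem \ref{main}, or directly from the iterated application of Proposition \ref{stratum} used in the construction of $N$), so locally one can choose vector fields with positive $d_N$-derivative, and a $\G$-invariant partition of unity combined with averaging over $\G$ patches these together without destroying the radial behavior near the two boundary pieces.

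The flow of $X$ is defined for all forward time on $W$ (since $M$ is compact and $X(d_N)$ is bounded below by a positive constant on $W$), and every integral curve starting at $S_\epsilon(\pi^{-1}(B))$ reaches $S_\delta(N)$ in finite time. This produces a $\G$-equivariant diffeomorphism $\varphi \colon S_\epsilon(\pi^{-1}(B)) \to S_\delta(N)$ together with a $\G$-equivariant trivialization $W \cong S_\epsilon(\pi^{-1}(B)) \times [0,1]$. Rescaling the tubular neighborhoods to full normal disc bundles and absorbing the cylindrical region $W$ into either side then yields the desired equivariant diffeomorphism
\[
M \cong D(\pi^{-1}(B)) \cup_\varphi D(N).
\]

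The only mildly delicate point is the simultaneous prescription of the radial behavior of $X$ on both boundary components while keeping $X(d_N) > 0$ globally; this is standard equivariant critical point theory once $\pi^{-1}(B)$ is smooth, since then the outward normal field along $S_\epsilon(\pi^{-1}(B))$ is a genuine smooth $\G$-invariant vector field along which $d_N$ increases (the gradient of $d_{\pi^{-1}(B)}$ has positive inner product with any subgradient of $d_N$ near $\pi^{-1}(B)$). Thus no new geometric input beyond Theorem \ref{main} is needed, and the corollary follows.
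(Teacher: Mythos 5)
Your proposal is correct and follows essentially the same route as the paper: use the noncriticality of $d_N$ on $M \setminus (\pi^{-1}(B) \cup N)$ (established via Proposition \ref{stratum}) to build a $\G$-invariant gradient-like vector field that is radial near both submanifolds, then flow to identify $\partial D(\pi^{-1}(B))$ with $\partial D(N)$. The only slip is a sign: with $X = -\nabla d_{\pi^{-1}(B)}$ near $\pi^{-1}(B)$ and $X = \nabla d_N$ near $N$, the field increases $d_N$, so integral curves run from $S_\delta(N)$ to $S_\epsilon(\pi^{-1}(B))$ rather than the other way; this does not affect the argument.
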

A manifold obtained as in \eqref{here?} is frequently called a double disc bundle. If we assume in the situation of this corollary that further $\G$ and $B$ are connected and $M$ is simply connected we obtain a mild bound on the dimension of $N$.
\begin{lemma}\label{codim}
Let the situation be as in corollary \ref{ddb} and assume further that $M$ is simply connected and $\G$ and $B$ are connected. Then $N$ has codimension greater than or equal to $2$ in $M$.
\end{lemma}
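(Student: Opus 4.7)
\textbf{Proof plan for Lemma \ref{codim}.} The plan is to argue by contradiction: suppose $\mathrm{codim}_M N \leq 1$ and derive a contradiction with $\pi_1(M)=1$. I first observe that since $\G$ is connected and $B$ is connected, the preimage $\pi^{-1}(B)$ is connected by a standard path-lifting argument using the slice theorem together with connectedness of $\G$-orbits; similarly $N$ is connected. The codimension-zero case is then immediate: $N$ would have to equal $M$, making $\nu N = N = M$, which forces $\pi^{-1}(B) = \emptyset$ via \eqref{normal bundle} and contradicts the nonemptyness of $B$.

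Assume therefore that $\mathrm{codim}_M N = 1$, so $D(N)$ is an $I$-bundle over the connected manifold $N$. In the trivial subcase, $D(N)\cong N\times[-1,1]$ and $\partial D(N)=N\sqcup N$. Since $\partial D(N)=\partial D(\pi^{-1}(B))$ is then disconnected while $\pi^{-1}(B)$ is connected, the normal bundle of $\pi^{-1}(B)$ must also be a trivial rank-one bundle (a sphere bundle of rank $\geq 2$ or a nontrivial line bundle over a connected base has connected sphere bundle), and one obtains a diffeomorphism $\pi^{-1}(B)\cong N$. The gluing \eqref{here?} then exhibits $M$ as the total space of a fiber bundle $N\to M\to S^1$ (collapse each $N$-fiber in the two $I$-bundles to a point), and the homotopy exact sequence yields a surjection $\pi_1(M)\twoheadrightarrow\pi_1(S^1)=\mathbb{Z}$, contradicting $\pi_1(M)=1$.

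In the nontrivial subcase, $D(N)$ is the twisted $I$-bundle over $N$ and $\partial D(N)=\widetilde N$ is a connected nontrivial double cover of $N$. I construct a connected $\mathbb{Z}/2$-cover $\widetilde M\to M$ as follows: the trivial $I$-bundle $\widetilde N\times[-1,1]$ is the orientation double cover of $D(N)$ and restricts on the boundary to the trivial double cover $\widetilde N\sqcup\widetilde N\to\widetilde N$; glue this onto two disjoint copies of $D(\pi^{-1}(B))$ via the identifications $\widetilde N\times\{\pm 1\}\cong\widetilde N=\partial D(\pi^{-1}(B))$. The resulting space $\widetilde M$ is connected because $\widetilde N\times[-1,1]$ meets both copies of $D(\pi^{-1}(B))$, and it carries a free $\mathbb{Z}/2$-action (the deck involution on the first factor combined with $t\mapsto -t$ on $[-1,1]$, together with a swap of the two copies of $D(\pi^{-1}(B))$) whose quotient is $M$. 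This is a nontrivial connected cover of $M$, contradicting $\pi_1(M)=1$.

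The main technical point to be careful about is the construction of the double cover in this last subcase: one must verify that $\widetilde N\times[-1,1]\to D(N)$ is a smooth double cover compatible with the collar structure of the gluing \eqref{here?}, and that the trivial double cover of $D(\pi^{-1}(B))$ matches the boundary cover of $\widetilde N\times[-1,1]$ after the prescribed identifications — once set up carefully this is routine, but keeping track of the two boundary components and the deck involution is where the argument can become confusing.
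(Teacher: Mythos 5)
Your proof is correct, and it takes a genuinely different route from the paper's. The paper first asserts that $E = \partial D(N) \cong \partial D(\pi^{-1}(B))$ is connected (deducing this from connectedness of $\G$ and $B$), which immediately forces $D(N)$ to be the twisted $I$-bundle, and then argues purely algebraically: $p_*\pi_1(E)$ has index $2$ in $\pi_1(N)$ because $E \to N$ is a connected double cover, while van Kampen applied to the double disk bundle decomposition together with $\pi_1(M)=1$ forces the quotient $\pi_1(N)/p_*\pi_1(E)$ to be trivial. You instead avoid establishing connectedness of $E$ up front by splitting into the trivial and twisted $I$-bundle cases. In the trivial case you observe that a disconnected sphere bundle over the connected $\pi^{-1}(B)$ forces its normal bundle to be a trivial line bundle, exhibits $M$ as a fiber bundle over $S^1$, and contradicts $\pi_1(M)=1$ via the long exact homotopy sequence; in the twisted case you construct the connected $\mathbb{Z}/2$-cover of $M$ explicitly rather than deducing its nonexistence from van Kampen. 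Your approach is more geometric and self-contained (it does not rely on the collar/connectedness claim for $E$, which the paper leaves terse), at the cost of a case split and some bookkeeping of gluings; the paper's is shorter once connectedness of $E$ is granted. Both are valid.
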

\begin{proof}
We consider the double disk bundle decomposition 
$$M = D(\pi^{-1}(B)) \cup_E D(N),$$
where we denote by $E$ the boundary of $D(N)$. Assume that $N$ has codimension $1$. Since $\G$ and $B$ are connected, it follows that $E = \partial D(N) \cong \partial D(\pi^{-1}(B))$ is connected as well. Therefore, the projection map $p : E \to N$ is a two fold covering map with connected total space. Hence $\pi_1(N)/p_*(\pi_1(E)) \cong \ZZ_2$ ($p_*$ denotes the morphism of fundamental groups induced by $p$). 
 
In contradiction to this we show that $\pi_1(N)/\pi_1(p)(E)$ is trivial using the van Kampen theorem: Let $q : E \to \pi^{-1}(B)$ denote the projection map and set $U := p_*(\pi_1(E)) \subseteq \pi_1(N)$. Then the following diagram commutes;
$$\begin{xy}
\xymatrix
{\pi_1(E) \ar^{p_*}[r] \ar_{q_*}[d] & \pi_1(N) \ar[d]^{[\ ]}\\
\pi_1(\pi^{-1}(B)) \ar_{0}[r] & \pi_1(N)/U.}
\end{xy}
$$
Here, $[\ ]$ denotes the quotient map (note that $U$ is normal in $\pi_1(N)$ since it has index $2$). By the van Kampen theorem $\pi_1(M)$ is the pushout of the maps $p_*$ and $q_*$. Therefore, there exists a morphism $h : \pi_1(M) \to \pi_1(N)/U$ such that the diagram
$$\begin{xy}
  \xymatrix{
      \pi_1(E) \ar[r]^{p_*} \ar[d]_{q_*}  &  \pi_1(N) \ar[d] \ar@/^/[ddr]^{[\ ]}  &  \\
      \pi_1(\pi^{-1}(B)) \ar[r] \ar@/_/[drr]_0  &  \pi_1(M) \ar[dr]^h  &  \\
      &  &  \pi_1(N)/U
  }
\end{xy}$$
commutes. Now, since $\pi_1(M)$ is trivial, it follows that the quotient map $[ \ ]$ is the $0$-map. Hence $\pi_1(N)/U$ is trivial.
\end{proof}

%===================================

\section{Nonnegatively curved fixed point homogeneous manifolds and torus manifolds}\label{section fph}
In this section we state and reprove the main results about fixed point homogeneous actions on nonnegatively curved manifolds and nonnegatively curved torus manifolds from the authors dissertation \cite{spindeler14}, see also \cite{spindeler15}. Most of it is a reproduction of section 3.2.2 therein. However, since these results have not been peer-reviewed and they follow without further reference to \cite{spindeler14} from our theorem \ref{main} we include them here.
\\

We call an isometric action of a compact Lie group $\G$ on a complete Riemannian manifold $M$ fixed point homogeneous if its fixed point set $\operatorname{Fix}(\G)$ is nonempty and there exists a component $F$ of $\operatorname{Fix} (\G)$ which is a boundary component of $M^*$. Note that such a component is then given by every fixed point component of maximal dimension. Fixed point homogeneous manifolds of positive curvature first emerged in \cite{grove-searle94} and were later on studied in their own right in \cite{grove-searle97}. The techniques used there were first adapted to nonnegative curvature and in dimensions less or equal than $4$ in \cite{galaz-garcia12} and later to dimension $5$ in \cite{galaz-garcia-spindeler12}. In any dimension we now obtain the following result.
\begin{thm}\label{fph}
Let $\G$ act fixed point homogeneously on a compact nonnegatively curved Riemannain manifold $M$ and let $F$ denote a fixed point component of maximal dimension. Then there exists a closed smooth $\G$-invariant submanifold $N$ of $M$ such that $M$ is equivariantly diffeomorphic to the normal disc bundles of $F$ and $N$ glued together along their boundaries;
\begin{align*}
M \cong D(F) \cup_\partial D(N).
\end{align*}
\end{thm}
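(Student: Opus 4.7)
The plan is to deduce Theorem \ref{fph} as a direct application of Theorem \ref{main} combined with Corollary \ref{ddb}. The setup is already arranged so that the hypotheses match almost verbatim.

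First I would verify that $F$ (identified with its image $\pi(F) \subset M^*$ via the quotient map, which is a homeomorphism on $\operatorname{Fix}(\G)$) is a boundary stratum of $M^*$. By definition of fixed point homogeneity, $F$ has codimension $1$ in $M^*$. Since $F$ consists of points of the single isotropy type $\G$ (maximal among all isotropy groups occurring in $F$), and fixed point components of maximal dimension form primitive extremal subsets of codimension $1$ in the quotient, $F$ qualifies as a boundary stratum in the sense of Section \ref{extremal}. Thus the setting of Theorem \ref{main} applies with $B := F$.

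Next I would invoke Theorem \ref{main} directly: it yields a closed smooth $\G$-invariant submanifold $N \subset M$ such that $M \setminus \pi^{-1}(B) = M \setminus F$ is equivariantly diffeomorphic to the normal bundle $\nu N$.

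Finally, to upgrade this to the double disc bundle decomposition, I observe that $\pi^{-1}(B) = F$ is itself a smooth closed $\G$-invariant submanifold of $M$ (fixed point components of smooth isometric actions are always smooth totally geodesic submanifolds). This is exactly the extra hypothesis needed to apply Corollary \ref{ddb}, which then produces the equivariant diffeomorphism
\[
M \cong D(F) \cup_\partial D(N)
\]
by gluing the normal disc bundles of $F$ and $N$ along their common boundary via the flow of a radial gradient-like vector field for $d_N$. I do not expect any genuine obstacle here: the only thing to check carefully is that the smoothness hypothesis of Corollary \ref{ddb} on $\pi^{-1}(B)$ is satisfied, which is automatic in the fixed point homogeneous setting. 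The entire difficulty is already carried by Theorem \ref{main} and the machinery developed in Section \ref{subsection distance}.
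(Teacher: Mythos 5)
Your proposal is correct and follows the same route as the paper, which proves Theorem \ref{fph} by simply invoking Corollary \ref{ddb}. You have merely spelled out the verification that the paper leaves implicit: that $B := \pi(F)$ is a boundary stratum (which is built into the definition of fixed point homogeneity, since $F$ is assumed to be a boundary component of $M^*$) and that $\pi^{-1}(B) = F$ is a smooth closed submanifold (being a fixed point component of an isometric action), so the smoothness hypothesis of Corollary \ref{ddb} holds.
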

\begin{proof}
This follows immediately from corollary \ref{ddb}.
\end{proof}
A closed connected manifold $M$ is called rationally $\Omega$-elliptic if the total rational homotopy of the loop space, $\pi_*(\Omega M, \ast) \otimes \mathbb Q$, is finite dimensional. $M$ is called rationally elliptic if it is rationally $\Omega$-elliptic and simply connected.

If we consider a double disk bundle $M = D(F) \cup_\partial D(N)$, where $D(F)$ and $D(N)$ are disk bundles over closed manifolds $F$ and $N$, then $F$ is rationally $\Omega$-elliptic if and only $\partial D(F)$ is. Moreover, from \cite{grove-halperin87}, Corollary 6.1 it follows that a simply connected manifold which admits a double disk bundle decomposition is rationally $\Omega$-elliptic if and only if the boundary of one of the two disk bundles is rationally $\Omega$-elliptic. Therefore, from theorem \ref{fph} we obtain the following theorem.

\begin{thm}\label{elliptic}
Let $M$ be a closed simply connected fixed point homogeneous manifold of nonnegative curvature and $F$ be a fixed point component of maximal dimension. Then $M$ is rationally $\Omega$-elliptic if and only if $F$ is rationally $\Omega$-elliptic. 
\end{thm}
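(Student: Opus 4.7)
The plan is essentially to concatenate Theorem \ref{fph} with the two rational homotopy observations that are recalled in the paragraph preceding the statement, since the statement has been set up so that the proof is almost a formal consequence.

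First, I would invoke Theorem \ref{fph} to produce a closed smooth $\G$-invariant submanifold $N \subset M$ together with the equivariant double disk bundle decomposition
\[
M \cong D(F) \cup_\partial D(N),
\]
where both $F$ and $N$ are closed. This places $M$ exactly in the setting of the double disk bundle discussion preceding Theorem \ref{elliptic}.

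Next, I would apply the cited Corollary 6.1 of \cite{grove-halperin87}: since $M$ is simply connected and admits a double disk bundle decomposition, $M$ is rationally $\Omega$-elliptic if and only if the common boundary $\partial D(F) = \partial D(N)$ is rationally $\Omega$-elliptic. (Here ``either of the two boundaries'' reduces to the single common boundary, so it suffices to analyze $\partial D(F)$.) Then I would use the sphere bundle fibration $S^{k-1} \to \partial D(F) \to F$, where $k$ is the rank of the normal bundle of $F$ in $M$. The long exact sequence of rational homotopy groups, combined with the fact that $\pi_*(S^{k-1}) \otimes \mathbb{Q}$ is finite dimensional, shows that $\pi_*(\Omega \partial D(F)) \otimes \mathbb{Q}$ is finite dimensional if and only if $\pi_*(\Omega F) \otimes \mathbb{Q}$ is. Chaining these two equivalences gives the theorem.

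The main obstacle is really just bookkeeping: one should check that the hypotheses of the Grove--Halperin result are in place, namely that $F$ and $N$ are closed (which is guaranteed by Theorem \ref{fph}) and that $M$ itself is simply connected (which is a standing assumption). No delicate geometric step is required beyond what Theorem \ref{fph} already provides; the proof reduces to a one-line citation once the decomposition is in hand.
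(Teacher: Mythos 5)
Your proposal is correct and matches the paper's argument exactly: the paper likewise combines Theorem \ref{fph} with the sphere-bundle observation that $F$ is rationally $\Omega$-elliptic iff $\partial D(F)$ is, and then invokes Corollary 6.1 of \cite{grove-halperin87} for the simply connected double disk bundle $M$. You have merely spelled out the long-exact-sequence justification that the paper states without detail.
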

We conclude with an application of this results to nonnegatively curved torus manifolds.
\begin{definition}
A torus manifold is a smooth, connected, closed and orientable manifold $M$ of even dimension $2n$ admitting a smooth and effective action by the $n$-dimensional torus $\mathsf T^n$ with nonempty fixed point set. 
\end{definition}

\begin{thm}
Let $M$ be a closed and simply connected torus manifold equipped with an invariant metric of nonnegative curvature. Then $M$ is rationally elliptic.
\end{thm}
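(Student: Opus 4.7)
I would proceed by induction on $n := \dim M / 2$; the cases $n \leq 1$ are immediate, since then $M$ is a point or $S^2$. For the inductive step, fix $p \in \Fix(\T^n)$, which is nonempty by definition of a torus manifold. The isotropy representation of $\T^n$ on $T_pM \cong \CC^n$ is faithful, so its weights $\alpha_1, \dots, \alpha_n$ span the weight lattice of $\T^n$. Set $\crcl := (\ker \alpha_1)^\circ \subseteq \T^n$. Then $\crcl$ fixes pointwise the complex hyperplane $\{z_1 = 0\} \subset T_pM$, so the component $F$ of $\Fix(\crcl)$ containing $p$ has real codimension $2$ in $M$, and $\crcl$ acts fixed point homogeneously on $M$ with $F$ a maximal-dimensional fixed component. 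By Theorem \ref{fph}, there is a closed $\crcl$-invariant submanifold $N \subset M$ with $M \cong D(F) \cup_\partial D(N)$; since $M$ is simply connected, Theorem \ref{elliptic} reduces the rational ellipticity of $M$ to the rational $\Omega$-ellipticity of $F$.

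The plan is then to apply the inductive hypothesis to $F$. As a fixed-point component of an isometric action, $F$ is closed, totally geodesic, nonnegatively curved, and of dimension $2(n-1)$. The quotient torus $\T^{n-1} := \T^n/\crcl$ acts smoothly on $F$ with isotropy weights $\bar\alpha_2, \dots, \bar\alpha_n$ at $p$; since $\bar\alpha_1 = 0$ and the $\alpha_j$ spanned the weight lattice of $\T^n$, the $\bar\alpha_j$ for $j \geq 2$ span the weight lattice of $\T^{n-1}$. Hence the $\T^{n-1}$-action on $F$ is effective in a neighborhood of $p$, and by connectedness of $\T^{n-1}$ and of $F$ it is effective globally, with $p$ as a fixed point; thus $F$ is itself a torus manifold of dimension $2(n-1)$ admitting an invariant metric of nonnegative curvature.

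What remains, and what I expect to be the main obstacle, is showing that $F$ is simply connected. I would attempt a van Kampen argument on the decomposition $M = D(F) \cup_E D(N)$: by Lemma \ref{codim}, $N$ has codimension at least $2$, and if $N$ has codimension at least $3$, then the sphere-bundle $E \to N$ has simply connected fibers, yielding $\pi_1(E) \cong \pi_1(N)$; combined with surjectivity of $\pi_1(E) \to \pi_1(F)$ coming from the principal circle-bundle $E \to F$, the pushout $\pi_1(M) = \pi_1(F) *_{\pi_1(E)} \pi_1(N)$ collapses to $\pi_1(F)$, forcing $\pi_1(F) = 1$. The borderline case in which $N$ also has codimension $2$ is more delicate, since then both $E \to F$ and $E \to N$ are principal circle bundles and the pushout presentation does not immediately force $\pi_1(F) = 1$; there I would refine the choice of $\crcl$ or iterate Theorem \ref{main} on further boundary strata of $M^*$, exploiting that a simply connected nonnegatively curved torus manifold carries sufficiently many codimension-$2$ fixed components to produce one with simply connected total space. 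Granting the simple connectedness of $F$, the inductive hypothesis gives that $F$ is rationally elliptic and hence rationally $\Omega$-elliptic, and Theorem \ref{elliptic} concludes that $M$ is rationally elliptic.
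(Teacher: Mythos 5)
Your strategy matches the paper's: induct on $n$, extract a circle acting fixed point homogeneously, decompose $M \cong D(F) \cup_\partial D(N)$ via Theorem \ref{fph}, reduce to the rational $\Omega$-ellipticity of $F$ via Theorem \ref{elliptic}, and feed $F$ (a smaller nonnegatively curved torus manifold) back into the induction. Your handling of the case $\operatorname{codim} N \geq 3$ is fine (the paper uses transversality instead of van Kampen, but both work), and the verification that $\T^{n-1}$ acts effectively on $F$ with a fixed point is correct.

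The genuine gap is exactly where you flagged it: the case $\dim N = 2n-2$. Your proposed remedies — ``refine the choice of $\crcl$'' or ``iterate Theorem \ref{main} on further boundary strata'' — are not arguments. There is no reason to expect that some other choice of circle produces an $N$ of codimension $\geq 3$; for a $\T^2$-action on $S^4$, for instance, every circle subgroup gives both pieces of codimension $2$, so the borderline case is unavoidable in general and must be handled head-on. The paper's actual resolution is quite different: since $N$ is orientable of codimension $2$, its normal bundle is a complex line bundle, and rotating the fibers gives a \emph{second} circle action $\T^1_2$ on $D(N)$, commuting with $\T^n$, which extends to a $\T^1_2$-action on all of $M = D(F) \cup_E D(N)$. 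One then shows $\operatorname{Fix}\T^n \subseteq \operatorname{Fix}\T^1_2$, runs van Kampen on the two circle bundles $E \to F$ and $E \to N$ to deduce $\pi_1(E) = U_1 U_2$, and finally evaluates the resulting surjection $\pi_1(\T^2) \to \pi_1(F)$ at a point $q_0$ lying over the fixed point $p_0$; since $\T^2$ fixes $p_0$, this surjection is the zero map, forcing $\pi_1(F) = 1$. Your proposal does not contain this construction or anything that substitutes for it, so the induction cannot close in the borderline case. Without that, the proof is incomplete.
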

\begin{proof}
Let $\dim M = 2n$ and $\T^n$ act effectively and isometrically with nonempty fixed point set on $M$. Let $p_0 \in \operatorname{Fix}(\T^n)$ and consider the orthogonal action of $\T^n$ on $\sphere^{2n - 1} \subset T_{p_0}M$ induced by the slice representation. It was shown in \cite{grove-searle94}, Theorem 2.2,  that there exists a $1$-dimensional torus $\T^1_1 \subset \T^n$ acting fixed point homogeneous on $\sphere^{2n - 1}$. Hence $\T_1^1$ also acts fixed point homogeneous on $M$ and there exists a maximal fixed point component $F$ containing $p_0$. Consequently
\begin{align}\label{decaf}
M \cong D(F) \cup_\partial D(N).
\end{align}
for a smooth submanifold $N$ with $\dim N \leq 2n -2$, by Theorem \ref{ddb} and Lemma \ref{codim}. We claim that $F$ is simply connected:

\textit{case 1:} Assume $\dim N  \leq 2n - 3$. Then, by transversality, $\pi_1(F) = \pi_1 (M \setminus N) = \pi_1(M)$. Hence $\pi_1(F) = 0$.

\textit{case 2}:  Assume $\dim N = 2n - 2$. Let $E := \partial D(F) \cong \partial D(N)$. Then $N$ is orientable and there exists a $\T^1_2$-action on the normal bundle of $N$ obtained by orthogonally rotating the fibers. For a proof of this claim we refer to the proofs of Propositions 3.5 and 3.6 in \cite{galaz-garcia-spindeler12}. This action commutes with $\T^n$ and we obtain a smooth $\T^1_2$-action on $D(N) = E \times_{T_1^2} D^2$ which can be extended to $(E \times_{\T_1^1} D^2) \cup_E (E \times_{T_1^2} D^2) = D(F) \cup_E D(N) = M$.

Let $q \in \operatorname{Fix}\T^n$, $t \in \T^n$ and $g \in \T^1_2$. Then $t.(g.q) = g.(t.q) = g.q$. Hence $\operatorname{Fix} \T^n$ is invariant under $\T_2^1$. Since $\T^1_2$ is connected, and $\operatorname{Fix} \T^n$ is discrete, we see that $\operatorname{Fix} \T^n \subseteq \operatorname{Fix} \T_2^1$. It follows that $\T^2 := \T^1_1 \oplus \T^1_2$ acts on $M$ with $p_0 \in \operatorname{Fix} \T^2$. %Further $F = E/\T^1_1$ and $N = E/\T^1_2$. 

Consider the projections $f_1 : E \to E/\T_1^1 = F$ and $f_2 : E \to E/\T_2^1 = N$. From the homotopy sequences of this fibrations we obtain exact sequences
$$\dots \to \pi_1(\T_1^1) \xrightarrow{{i_1}_*} \pi_1(E) \xrightarrow{{f_1}_*} \pi_1(F) \to 1$$
and
$$\dots \to \pi_1(\T_2^1) \xrightarrow{{i_2}_*} \pi_1(E) \xrightarrow{{f_2}_*} \pi_1(N) \to 1,$$
where the maps $i_1$ and $i_2$ are the inclusions of the fibers over a given basepoint. Set $U_k = {i_k}_*(\pi_1(\T^1_k))$ for $k = 1,2$.
So $\pi_1(F) \cong \pi_1(E)/U_1$, $\pi_1(N) \cong \pi_1(E)/U_2$ and we have a commutative diagram
\begin{align}\label{diagram}
\begin{xy}
\xymatrix
{\pi_1(E) \ar^{{f_2}_*}[r] \ar_{{f_1}_*}[d] & \pi_1(N) \ar[d]\\
\pi_1(F) \ar[r] & \pi_1(E)/U_1U_2.}
\end{xy}
\end{align}
Here the lower map is given via $\pi_1(F) \cong \pi_1(E)/U_1 \to \pi_1(E)/U_1U_2$, and analogously for the map on the right. By \eqref{decaf} and the van Kampen theorem there exists a morphism $h : \pi_1(M) \to \pi_1(E)/U_1U_2$ making the following diagram commute:
$$\begin{xy}
  \xymatrix{
      \pi_1(E) \ar[r]^{{f_2}_*} \ar[d]_{{f_1}_*}  &  \pi_1(N) \ar[d] \ar@/^/[ddr]  &  \\
      \pi_1(F) \ar[r] \ar@/_/[drr]  &  \pi_1(M) \ar[dr]^h  &  \\
      &  &  \pi_1(E)/U_1U_2
  }
\end{xy}$$
Since all the maps in \eqref{diagram} are surjective it follows that $h$ is surjective as well. Since $\pi_1(M) = 0$, it follows that $\pi_1(E) = U_1U_2$. Hence, $\pi_1(E)$ is generated by the orbits $\T^1_1(q)$ and $\T^1_2(q)$ for a given point $q \in E$. Therefore, the map $\tau_q : \T^2 \to E$, $g \mapsto g.q$ induces a surjection ${\tau_q}_* : \pi_1(\T^2) \to \pi_1(E)$ for all $q \in E$. Consequently we obtain a surjection $(f_1 \circ \tau_q)_* : \pi_1(\T^2) \to \pi_1(F).$

Pick $q_0 \in E$ such that $f_1(q_0) = p_0$. Since $p_0 \in \operatorname{Fix} \T^2$, the map $f_1 \circ \tau_{q_0}$ is constant. Thus $(f_1 \circ \tau_{q_0})_* = 0$ and it follows that $F$ is simply connected.

Because $F$ is totally geodesic, $F$ also has nonnegative curvature. Further $\T^{n - 1} = \T^n/\T^1_1$ acts effectively on $F^{2n - 2}$ with nonempty fixed point set. So $F$ is a nonnegatively curved, simply connected Torus manifold as well.

Now the proof follows by induction on $n$ using theorem \ref{elliptic}.
\end{proof}
Finally we note that, using also the results of this chapter, Wiemeler shows in \cite{wiemeler14} that a compact and simply connected torus manifold admitting an invariant metric of nonnegative curvature is diffeomorphic to a quotient of a free linear torus action on a product of spheres.
\bibliographystyle{alpha}
\bibliography{qwb}

\def\cprime{$'$}
\begin{thebibliography}{{Wie}15}

\bibitem[CG72]{cheeger-gromoll72}
Jeff Cheeger and Detlef Gromoll.
\newblock On the structure of complete manifolds of nonnegative curvature.
\newblock {\em Ann. of Math. (2)}, 96:413--443, 1972.

\bibitem[GG12]{galaz-garcia12}
Fernando Galaz-Garcia.
\newblock Nonnegatively curved fixed point homogeneous manifolds in low
  dimensions.
\newblock {\em Geom. Dedicata}, 157:367--396, 2012.

\bibitem[GGS12]{galaz-garcia-spindeler12}
Fernando Galaz-Garcia and Wolfgang Spindeler.
\newblock Nonnegatively curved fixed point homogeneous 5-manifolds.
\newblock {\em Ann. Global Anal. Geom.}, 41(2):253--263, 2012.

\bibitem[GH87]{grove-halperin87}
Karsten Grove and Stephen Halperin.
\newblock Dupin hypersurfaces, group actions and the double mapping cylinder.
\newblock {\em J. Differential Geom.}, 26(3):429--459, 1987.

\bibitem[Gro02]{grove02}
Karsten Grove.
\newblock Geometry of, and via, symmetries.
\newblock In {\em Conformal, {R}iemannian and {L}agrangian geometry
  ({K}noxville, {TN}, 2000)}, volume~27 of {\em Univ. Lecture Ser.}, pages
  31--53. Amer. Math. Soc., Providence, RI, 2002.

\bibitem[GS94]{grove-searle94}
Karsten Grove and Catherine Searle.
\newblock Positively curved manifolds with maximal symmetry-rank.
\newblock {\em J. Pure Appl. Algebra}, 91(1-3):137--142, 1994.

\bibitem[GS97]{grove-searle97}
Karsten Grove and Catherine Searle.
\newblock Differential topological restrictions curvature and symmetry.
\newblock {\em J. Differential Geom.}, 47(3):530--559, 1997.

\bibitem[Per91]{perelman91}
Grigori Perelman.
\newblock Alexandrov's spaces with curvatures bounded from below {II}.
\newblock 1991.
\newblock Preprint.

\bibitem[Pet07]{petrunin2007}
Anton Petrunin.
\newblock Semiconcave functions in alexandrov's geometry.
\newblock {\em Surveys in Differential Geometry, Volume XI: Metric and
  Comparison Geometry}, 2007.

\bibitem[{Spi}14]{spindeler14}
Wolfgang {Spindeler}.
\newblock {\em {$\mathsf S^1$-actions on $4$-manifolds and fixed point
  homogeneous manifolds of nonnegative curvature}}.
\newblock dissertation, WWU M\"unster, 2014.
\newblock Available at
  \url{http://miami.uni-muenster.de/Record/272b3efb-9d8d-4ee3-8b15-2ab860f49ed0}.

\bibitem[{Spi}15]{spindeler15}
Wolfgang {Spindeler}.
\newblock {$\mathsf S^1$-actions on 4-manifolds and fixed poit homogeneous
  manifolds of nonnegative curvature}.
\newblock {\em ArXiv e-prints}, October 2015.
\newblock \url{http://arxiv.org/abs/1510.01548}.

\bibitem[W\"10]{worner2010}
Andreas W\"{o}rner.
\newblock {\em {Boundary Strata of nonnegatively curved Alexandrov spaces and a
  splitting theorem}}.
\newblock dissertation, WWU M\"unster, 2010.

\bibitem[{Wie}15]{wiemeler14}
Michael {Wiemeler}.
\newblock {Torus manifolds and non-negative curvature}.
\newblock {\em J. Lond. Math. Soc., II. Ser. 91}, 3:667--692, 2015.

\bibitem[Wil06]{wilking06}
Burkhard Wilking.
\newblock Positively curved manifolds with symmetry.
\newblock {\em Annals of Mathematics.}, 163:607--668, 2006.

\bibitem[Wil07]{wilking07dual}
Burkhard Wilking.
\newblock A duality theorem for riemannian foliations in nonnegative sectional
  curvature.
\newblock {\em Geom. Funct. Anal.}, 17(4):1297--1320, 2007.

\end{thebibliography}

\end{document}